\numberwithin{equation}{section}
\renewcommand\labelenumi{\textup{\alph{enumi})}}
\renewcommand\theenumi\labelenumi
\newtheorem{theorem}{Theorem}[section]
\newtheorem{lemma}[theorem]{Lemma}
\newtheorem{proposition}[theorem]{Proposition}
\newtheorem{corollary}[theorem]{Corollary}
\newtheorem{assumption}[theorem]{Assumption}
\newtheorem{remark}[theorem]{Remark}
\newtheorem{example}[theorem]{Example}
\numberwithin{equation}{section}
\newcommand{\be}{\begin{equation}}
\newcommand{\ee}{\end{equation}}
\newcommand{\bes}{\begin{equation*}}
\newcommand{\ees}{\end{equation*}}
\def\E{\bE}
\def\P{\bP} 
\newcommand{\N}{\mathbf{varrho}}
\newcommand{\R}{\mathbf{R}}
\renewcommand{\d}{{\rm d}}
\renewcommand{\geq}{\geqslant}
\renewcommand{\leq}{\leqslant}
\renewcommand{\ge}{\geqslant}
\renewcommand{\le}{\leqslant}
\renewcommand{\P}{\mathrm{P}}
\def\R{{\mathbb R}}
\def\N{{\mathbb N}}
\def\E{{\mathbb E}}
\def\P{{\mathbb P}}
\def\m1{\mathbf{1}}
   \DeclareMathOperator{\Cov}{Cov}
\title{\textbf{Level of noises and long time behavior of the solution for
space-time fractional SPDE in bounded domains}}
\author{	\textbf{Jebessa B. Mijena}\\ Department of Mathematics, Georgia College \& State University,
 GA 31061, USA\\
 Email: jebessa.mijena@gcsu.edu
\and \textbf{Erkan Nane}\\ Department of  Mathematics and Statistics, Auburn University, Alabama 36849, USA \\
Email: ezn0001@auburn.edu
\and \textbf{Alemayehu G. Negash} \\ Department of  Mathematics and Statistics, Auburn University, Alabama 36849, USA\\
Email: agn0008@auburn.edu
}
\date{\today}
\begin{document}
\maketitle

\begin{abstract}
In this paper we study the long time behavior of the solution to a certain class of  space-time fractional stochastic equations with respect to the level $\lambda$ of a noise  and show how the choice of the order $\beta \in (0, \,1)$ of the fractional time derivative affects the growth and decay behavior of their  solution. We consider both the cases of white noise and colored noise. Our results extend the main results in Foondun \cite{Foondun2} to fractional Laplacian as well as higher dimensional cases.

\end{abstract}
{\bf Keywords:} Space-time fractional SPDE, space-time white noise, space colored noise, moment bounds in bounded domains.

\maketitle

\noindent
\newpage
\section{Introduction and Statement of the Main results}
In 1855, Adolf Fick \cite{Fick} developed his now-famous principle governing the transport of mass through diffusive media.  In his second law, Fick showed that $u(t,\,\text{x})$ obeys the classical diffusion equation $\partial_t u = D \partial_{xx}u$ in the spatial dimension, which estimates how the concentration $u(t,\,\text{x})$ of a diffusive substance varies with space and time, where $D$ is the diffusion coefficient.
The space-time fractional diffusion equation which is obtained when integer-order derivative operators in space and time are replaced by fractional counterparts (in Caputo or Riemann-Liouville sense) has
been recently treated by several  authors (see, for example, Mijena and Nane \cite{Nane1}, Saichev and Zaslavsky  \cite{Saichev}, and  Mainardi, Luchko, and  Pagnini\cite{Mainardi}). The typical form of the space-time fractional equation is $\partial_t^\beta u = -(-\Delta)^{\alpha/2}u$, where $\partial_t^\beta$ is the Caputo fractional derivative with $\beta \in (0,\,1)$, $\alpha \in (0,\,2)$ and $\Delta = \sum_{i=1}^{d}\partial_{x_i}^2$ is the Laplacian. These equations can be used to model anomalous diffusion processes or diffusion processes in non-homogeneous media with random fractal structures (see, for instance,  Meerschaert \textit{et al.} \cite{Meerschaert2}, Meerschaert, Nane, and Vellaisamy \cite{Meerschaert4}, Baeumer, Luks, and Meerschaert \cite{Baeumer1}, Chen, Kim and Kim \cite{Chen2} and the references therein). Since these space-time fractional equations depend on the fractional parameters $\beta$ and $\alpha$, in \cite{Dang} the importance of the continuity of their solutions with respect to these parameters is discussed. For example, if the partial derivative in time $\partial_t$ in the classical heat equation $\partial_t u = -\Delta u$ is substituted with fractional derivatives $\partial_t^{\beta}$ for $0 <\beta <1$, the processes explains the sticking and trapping behavior of particle, while if  the Laplacian $\Delta$  is replaced with fractional power $-(-\Delta)^{\alpha/2}$ for $0<\alpha <2$, it describes long particle jumps (see \cite{Ahn}).
In \cite{Nane1}, Mijena and Nane have recently introduced time fractional SPDEs, which can be utilized to represent phenomena with random effects and thermal memory.\\
Consider the following space-time fractional equation with Dirichlet boundary conditions (see \eqref{JNA-3b} below for a representation of the solution).
\begin{equation}\label{JNA01}
\left \{\begin{aligned}
&\partial^\beta_t u_t(\text{x})=
-(-\Delta)^{\alpha/2} u(\text{x}),\,\, \text{x} \in B, \, t>0,\\
&\,u_t(x)=0, \,\, \text{x} \in B^C, \, \, t>0,
\end{aligned}\right.
\end{equation}
where $\alpha\in (0,\,2)$ and $\beta\in (0,\,1)$.
The fractional time derivative is the Caputo derivative which first appeared in \cite{Caputo} and is defined by
\begin{equation}\label{JNA02}
\partial^\beta_t u_t(\text{x})=\frac{1}{\Gamma(1-\beta)}\int_0^t \frac{\partial
u_r(\text{x})}{\partial r}\frac{\d r}{(t-r)^\beta}.
\end{equation}
If $u_0(\text{x})$ denotes the initial condition to equation \eqref{JNA01}, then the solution can be written as
$$u_t(\text{x})=\int_BG_B^{(\beta)}(t,\,x,\,y)u_0(\text{y}) \, dy,$$
where $G_B^{(\beta)}(t,\,x,\,y)$ is the space-time fractional heat kernel defined in \eqref{JNA-4}.\\
Now consider
 \begin{align}\label{JNA03}
 \partial^\beta_t u_t(\text{x})&=
 -(-\Delta)^{\alpha/2} u(\text{x}) + f(t,\text{x}),
\end{align}
with the same initial condition $u_0(\text{x})$ and $f(t,\text{x})$ is some nice function.
To get the correct version of \eqref{JNA03} we will make use of \cite{Umarov1, Umarov2, Umarov3}. The fractional Duhamel principle implies that the mild solution to \eqref{JNA03} for $t>0$ is given by
\begin{equation}\label{JNA04}
u_t(\text{x})=\int_BG_B^{(\beta)}(t,\,\text{x},\,\text{y})u_0(\text{y}) \, \d \text{y} + \int_0^t\int_BG_B^{(\beta)}(t-s,\,\text{x},\,\text{y}) \partial_s^{1-\beta}f(s,\text{y})\, \d \text{y} \,\d s.
\end{equation}
Using the fractional order integral $I^{\gamma}_t$ defined by
\begin{equation*}
I^{\gamma}_{t}f(t):=\frac{1}{\Gamma(\gamma)}\int_{0}^{t}(t-\tau)^{\gamma -1}f(\tau)\d\tau,
\end{equation*}
and the property
$$\partial^\beta_t I^{\beta}_{t}g(\text{t})=g(\text{t}),$$
for every $\beta \in (0,\,1)$, and $g \in L_{\infty} (\R_+)$ or $g \in C(\R_+)$, then by the Duhamel’s principle, the mild solution to \eqref{JNA03} where the force is $f(t,\, x) = I_t^{1-\beta} g(t,\,x)$, will be given by
\begin{align}\label{JNA05}
 u_t(\text{x})&=\int_BG_B^{(\beta)}(t,\,x,\,y)u_0(\text{y}) \, dy + \int_0^t\int_BG_B^{(\beta)}(t-s,\,x,\,y) \partial_s^{1-\beta}(I^{1-\beta}_{t}g(s,\,\text{y}))\, dy\,ds \nonumber \\
&=\int_BG_B^{(\beta)}(t,\,x,\,y)u_0(\text{y}) \, dy + \int_0^t\int_BG_B^{(\beta)}(t-s,\,x,\,y) g(s,\,\text{y})\, dy\,ds.
\end{align}
Recently, Mijena and Nane \cite{Nane1}, Foondun, Mijena, and Nane \cite{FMNane}, and Foondun \cite{Foondun2} considered the following time fractional stochastic heat equation on the interval $(0,\,L)$ with Dirichlet boundary condition:
\begin{equation}\label{JNA001}
\left \{\begin{aligned}
&\partial^\beta_t u_t(x)=\frac{1}{2}\partial _{xx}u_t(x)+I_t^{1-\beta}[\lambda \sigma(u_t(x))\dot{W}(t,\,x)]\;\; \text{for}\;\; 0<x<L\;\;\text{and}\; \;t>0\\
&u_t(0)=u_t(L)=0 \quad \text{for}\quad t>0,
\end{aligned}\right.
\end{equation}
where the initial condition $u_0:[0,L] \rightarrow \R_+$ is non-random and non-negative bounded function which is strictly positive on a set of positive measures in $[0,\,L]$. $\dot{W}$ denotes a space-time white noise and $\sigma:\R\rightarrow \R$ is a globally Lipschitz function satisfying $l_\sigma|x|\leq  |\sigma(x)| \leq L_\sigma|x|$ where $l_\sigma$ and $L_\sigma$ are positive constants. $\lambda$ is a positive parameter known as {\it the level of the noise} and will play a significant part in this paper.\\
Using Walsh \cite{Walsh}, we define the mild solution to \eqref{JNA001} as the random field
$u = \{u_t(x)\}_{t>0,x \in B}$
satisfying
\begin{equation}\label{JNA001a}
u_t(x)= (\mathcal{P}_B^{\beta}u_0)_t(x)+ \lambda \int_0^L\int_0^t p_B^{\beta}(t-s,\,x,\,y)\sigma(u_s(y))W(\d s\,,\d y),
\end{equation}
where $p_B^{\beta}(t,x,y)$ denotes the probability density function of the time-changed killed Brownian motion upon exiting the domain $[0,\,L]$ associated with the fractional time operator, and
\begin{equation*}
(\mathcal{P}_B^{\beta} u_0)_t(x):=\int_0^L p_B^{\beta}(t,\,x,\,y) u_0(y)\d y.
\end{equation*}

In Foondun and Nualart \cite{Foondun1} and Foondun, Guerngar, and Nane \cite{FGNane}, the authors looked at the behavior of the solution to equation \eqref{JNA001} for small and large $\lambda$ when $\boldsymbol\beta =1$. They showed that
if $\lambda$ is large enough, the second moment of the solution $u_t$ grows exponentially fast; while if $\lambda$ is small, the second moment of the solution $u_t$ eventually decays exponentially.
Nualart \cite{Nualart1} and Xie \cite{Xie} have used precise heat kernel estimates to sharpen the results in \cite{Foondun1}. However, in \cite{Foondun2} Foondun  has shown that a more complicated situation will occur instead of such phase transition if fractional time derivative replaces the usual time derivative. That is, for any fixed $\beta \in (0,\, 1)$, the long time behavior of the supremum of the second moment of the solution to equation \eqref{JNA001} behaves differently by considering the cases when $\beta \in (0,\, \frac{1}{2})$ and $\beta \in (\frac{1}{2}, \, 1)$ separately, and the reasons for considering these cases are also explained. For more details about the interpretation of the results, one can refer \cite{Foondun2}.  These findings are interesting from an application standpoint since fractional time derivatives are commonly used in the modeling of various systems with memory. Therefore, it is very important to realize that the use of such derivatives can result in considerable change in the qualitative properties of the solution. The main aim of this work is to investigate the long time behavior of the solution to \eqref{JNA-8} with respect to the level of the noise $\lambda$. Our work extend the main results in Foondun \cite{Foondun2} to fractional Laplacian as well as higher dimensional cases.

\newpage

\newpage
Consider the following stochastic heat equation on a regular bounded domain $B$ in $\R^d$, $d\geq 1$ with Dirichlet boundary condition:
\begin{equation}\label{JNA-8}
\left \{\begin{aligned}
&\partial^\beta_t u_t(\text{x})= -\left(-\Delta \right)^{\frac{\alpha}{2}} u_t(\text{x})+I_t^{1-\beta}[\lambda \sigma(u_t(\text{x}))\dot{W}(t,\,\text{x})]\;\; \text{for}\;\; \text{\text{x}} \in B\;\;\text{and}\; \;t>0\\
&u_t(\text{x})=0 \quad \text{for} \quad \text{x} \notin B\;\;\text{and}\; \;t>0,
\end{aligned}\right.
\end{equation}
and the initial condition $u_0:B \rightarrow \R_+$ is  a non-random measurable and bounded function that has support with positive measure inside B. The operator $-\left(-\Delta \right)^{\frac{\alpha}{2}}$, where $0< \alpha \leq 2$, is the $L^2$-generator of a symmetric $\alpha$-stable process $X_t^B$ killed when exiting $B$. $\dot{W}$ denotes a space-time white noise and $\sigma:\R\rightarrow \R$ is a globally Lipschitz function satisfying $l_\sigma|\text{x}|\leq  |\sigma(\text{x})| \leq L_\sigma|\text{x}|$ where $l_\sigma$ and $L_\sigma$ are positive constants. The positive parameter $\lambda$ is called {\it the level of the noise}.\\
$\dot{W}(t,\,\text{x})$ is a space-time white noise with $\text{x} \in B$, which is assumed to be adapted with respect to a filtered probability space $(\Omega,\, \mathcal{F}, \,\mathcal{F}_t, \,\P)$, where $\mathcal{F}$ is complete and the filtration $\{\mathcal{F}_t,\, t \geq 0\}$ is right continuous.\\
$\dot{W}(t,\,\text{x})$ is a generalized process with covariance given by
$$\E \left [\dot{W}(t,\,\text{x})\dot{W}(s,\,\text{y})\right]=\delta(t-s)\delta(\text{x}-\text{y}).$$
That is, $W(f)$ is a random field indexed by function $f \in L^2((0,\,\infty) \times B)$ and for all $f,g \in L^2((0,\,\infty) \times B)$, we have
$$\E \left [W(f)W(g)\right ]=\int_0^\infty \int_B f(t,\text{x}) g(t,\text{x})\,d\text{x}\,dt.$$
Hence $W(f)$ can be represented
$$W(f)=\int_0^\infty \int_B f(t,\text{x}) W(\,d\text{x}\,dt).$$
Note that $W(f)$ is $\mathcal{F}_t$-measurable whenever $f$ is supported on $[0,\,t]\times B$.\\
The Walsh-Dalang Integrals \cite{Dalang, Walsh} that is used in equation \eqref{JNA-9} is defined as follows. We use the Brownian Filtration $\{\mathcal{F}_t\}$ and the Walsh-Dalang
integrals defined as follows:
\begin{itemize}
\item $(t, x) \rightarrow \Phi_t(x)$ is an elementary random field when $\exists 0 \leq a<b$ and
an $\mathcal{F}_a$-measurable $X \in L^2(\Omega)$ and $\phi \in L^2(B)$ such that
$$\Phi_t(x) = X1_{[a,\,b]}(t)\phi(x)\ \ \, t > 0, x \in B.$$
\item If $h = h_t(x)$ is non-random and $\Phi$ is elementary, then
$$\int h\Phi dW :=X \int_{(a,\,b)\times B}h_t(x)\phi(x) W(dtdx).$$
\item The stochastic integral is Wiener’s, and it is well defined iff
$h_t(x)\phi(x) \in L^2([a,\,b] \times B)$.
\item We have Walsh isometry,
\begin{equation}
 \E \left (\left|\int h\Phi dW\right|^2\right) = \int_0^{\infty} ds \int_B dy [h_s(y)]^2\E(|\Phi_s(y)|^2).
\end{equation}
\end{itemize}
We can make sense of equation \eqref{JNA-8} using Walsh theory \cite{Walsh} again by using the integral equation below.
\begin{equation}\label{JNA-9}
u_t(x)=
(\mathcal{G}_B^{(\beta)} u_0)_t(\text{x})+ \lambda \int_B\int_0^t G_B^{(\beta)}(t-s,\,\text{x},\,\text{y})\sigma(u_s(\text{y}))W(\d s\,,\d \text{y}),
\end{equation}
where  $G_B^{(\beta)} (t,\text{x},\text{y})$ denotes the heat kernel of the space-time fractional diffusion equation with Dirichlet boundary conditions in \eqref{JNA01}, and
\begin{equation*}
(\mathcal{G}_B^{(\beta)} u_0)_t(x):=\int_B G_B^{(\beta)}(t,\,\text{x},\,\text{y}) u_0(\text{y})\,\d \text{y}.
\end{equation*}
If $d<(2 \wedge \beta^{-1})\alpha$, the proof for the existence of a unique random-field solution of \eqref{JNA-8} satisfying
$$\sup_{x \in B} \E|u_t(x)|^2 \leq c_1e^{c_2t\lambda^{\frac{2\alpha}{\alpha-\beta d}}}$$
for all $t>0$ can be found in Foondun \textit{et al.} \cite{FMNane}.\\
Next we state our main results.\\
We first note that when $\alpha = 2$ and $B=(0,\,1)$, \eqref{JNA-8} becomes \eqref{JNA001}. Thus,  our new results are  consistent with that obtained in Foondun \cite{Foondun2}
\begin{theorem}\label{Thrm-1}
Suppose that $d<(2 \wedge \beta ^{-1})\alpha$. Let $u_t$ denote the unique solution to \eqref{JNA-9}.  Then the second moment of $u_t$ cannot decay exponentially fast, regardless of what $\lambda$ is. Indeed, if we further suppose that $\beta\in (0,\,\frac{1}{2}]$, then as $t$ gets large, $\sup_{x\in B} \E|u_t(x)|^2 $ grows exponentially fast for any $\lambda.$
\end{theorem}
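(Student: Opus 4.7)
Set $F(t,x):=\E|u_t(x)|^2$ and $v_0(t,x):=(\mathcal{G}_B^{(\beta)}u_0)_t(x)$. The starting point is Walsh's isometry applied to \eqref{JNA-9}, which combined with the lower Lipschitz bound $|\sigma(u)|\ge l_\sigma|u|$ produces the Volterra-type lower bound
\[
F(t,x)\ \ge\ v_0(t,x)^2+\lambda^2 l_\sigma^2\int_0^t\!\!\int_B \bigl|G_B^{(\beta)}(t-s,x,y)\bigr|^2 F(s,y)\,dy\,ds.
\]
For the first assertion I would keep only the inhomogeneous term. Using the spectral representation $G_B^{(\beta)}(t,x,y)=\sum_n E_\beta(-\mu_n t^\beta)\phi_n(x)\phi_n(y)$ in the orthonormal Dirichlet eigenbasis $\{(\phi_n,\mu_n)\}$ of $(-\Delta)^{\alpha/2}$ on $B$ and the Mittag-Leffler asymptotic $E_\beta(-z)\sim 1/(\Gamma(1-\beta)z)$, one finds $v_0(t,x)\sim c_1\phi_1(x)/(\Gamma(1-\beta)\mu_1 t^\beta)$ as $t\to\infty$, where $c_1=\int_B u_0\phi_1\,dy>0$ by positivity of $\phi_1$ together with the support hypothesis on $u_0$. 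Hence $F(t,x)\gtrsim t^{-2\beta}$ for large $t$, which rules out exponential decay for any value of $\lambda$.

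For the growth statement when $\beta\in(0,\tfrac12]$, the plan is to project the mild equation onto $\phi_1$ so as to extract a scalar Volterra inequality. Setting $\xi_1(t):=\int_B u_t(y)\phi_1(y)\,dy$ and $g(t):=\int_B F(t,y)\phi_1(y)^2\,dy$, a stochastic Fubini combined with the spectral identity $\int_B G_B^{(\beta)}(r,x,y)\phi_1(x)\,dx=E_\beta(-\mu_1 r^\beta)\phi_1(y)$ reduces the projected mild equation to
\[
\xi_1(t)=c_1 E_\beta(-\mu_1 t^\beta)+\lambda\int_0^t\!\!\int_B E_\beta\bigl(-\mu_1(t-s)^\beta\bigr)\phi_1(y)\sigma(u_s(y))\,W(ds,dy).
\]
Walsh's isometry together with $\sigma(u)^2\ge l_\sigma^2 u^2$ then yields
\[
\E\xi_1(t)^2\ \ge\ c_1^2 E_\beta(-\mu_1 t^\beta)^2+\lambda^2 l_\sigma^2\int_0^t E_\beta\bigl(-\mu_1(t-s)^\beta\bigr)^2 g(s)\,ds,
\]
while the elementary Cauchy-Schwarz bound $\xi_1(t)^2\le|B|\int_B u_t(y)^2\phi_1(y)^2\,dy$ gives $\E\xi_1(t)^2\le|B|g(t)$. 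The two estimates close into a scalar Volterra-type lower bound for $g(t)$ alone.

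Taking Laplace transforms in $t$, with $\widetilde E^2(s):=\int_0^\infty e^{-st}E_\beta(-\mu_1 t^\beta)^2\,dt$ and $\widetilde g(s):=\int_0^\infty e^{-st}g(t)\,dt$, this becomes
\[
\widetilde g(s)\ \ge\ \frac{c_1^2}{|B|}\widetilde E^2(s)+\frac{\lambda^2 l_\sigma^2}{|B|}\widetilde E^2(s)\,\widetilde g(s),
\]
which forces $\widetilde g(s)=\infty$ as soon as $\lambda^2 l_\sigma^2\widetilde E^2(s)\ge|B|$. The borderline tail $E_\beta(-\mu_1 t^\beta)^2\sim Ct^{-2\beta}$ with $2\beta\le 1$ makes $\widetilde E^2(s)\uparrow\infty$ as $s\downarrow 0$; for every $\lambda>0$ there is therefore a unique $s^*=s^*(\lambda)>0$ with $\lambda^2 l_\sigma^2\widetilde E^2(s^*)=|B|$, and $\widetilde g$ diverges on $(0,s^*)$. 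The standard abscissa-of-convergence principle then gives $g(t)\gtrsim e^{rt}$ for every $r<s^*$, and the normalization $\|\phi_1\|_{L^2(B)}=1$ implies $g(t)\le\sup_{x\in B}F(t,x)$, which finishes the exponential growth claim.

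The main technical difficulty is the reduction from the two-variable functional inequality for $F$ to the scalar Volterra inequality for $g$: it depends on the stochastic Fubini applied to the $\phi_1$-projected mild solution (justified by $L^2$-integrability of the deterministic kernel, which the standing assumption $d<(2\wedge\beta^{-1})\alpha$ provides) and on the Cauchy-Schwarz comparison $\E\xi_1^2\le|B|g$, without which the right-hand side would still involve the unknown $F$. Everything else—the Mittag-Leffler asymptotic, the Laplace-transform/renewal argument, and the spectral identity—is standard. The threshold $\beta\le\tfrac12$ enters only at the very end, through the non-integrability at infinity of $E_\beta(-\mu_1 t^\beta)^2$, and this is precisely the same mechanism that drives the analogous one-dimensional result in \cite{Foondun2}.
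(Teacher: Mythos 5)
Your argument is correct and is essentially the paper's own proof: projection of the mild equation onto the first eigenfunction via stochastic Fubini, the Walsh isometry with the lower Lipschitz bound $|\sigma(u)|\ge l_\sigma|u|$, a Cauchy--Schwarz comparison between $\E\langle u_t,\varphi_1\rangle^2$ and $\int_B \E|u_t(y)|^2\varphi_1^2(y)\,\d y$, and a Laplace-transform (renewal) argument exploiting that $\Lambda(\theta)=\int_0^\infty e^{-\theta t}E_\beta(-\mu_1t^\beta)^2\,\d t$ diverges as $\theta\downarrow 0$ exactly when $2\beta\le 1$; your auxiliary function $g(t)$ is just the quantity the paper absorbs into its ``$\gtrsim$'' step in \eqref{JNA-12a}, and your non-decay argument via the Mittag--Leffler polynomial tail of the deterministic term matches the paper's remark about the first term of \eqref{JNA-13b}.
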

\begin{theorem}\label{Thrm-2} In Theorem \ref{Thrm-1} if $\beta\in (\frac{1}{2},\,1)$, then there exist a strictly positive real number $\lambda_u$ such that for all $\lambda>\lambda_u$,  $\sup_{x\in B} \E|u_t(x)|^2 $ grows exponentially fast as time gets large.
\end{theorem}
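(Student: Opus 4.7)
The strategy is to mirror the argument of Foondun \cite{Foondun2} but replace the one-dimensional sine series by a projection onto the principal Dirichlet eigenfunction of the fractional Laplacian, reducing the problem to a scalar renewal inequality. Let $\phi_1>0$ denote the $L^2(B)$-normalised principal eigenfunction of $(-\Delta)^{\alpha/2}$ on $B$ with Dirichlet condition and let $\mu_1>0$ be the corresponding eigenvalue. Define $U_1(t):=\int_B\phi_1(y)\,u_t(y)\,\d y$. Using the eigenfunction identity $\int_B G_B^{(\beta)}(t,x,y)\phi_1(x)\,\d x=E_\beta(-\mu_1 t^\beta)\phi_1(y)$ together with the Walsh--Dalang isometry applied to the mild formulation \eqref{JNA-9}, one obtains
\begin{equation*}
\E|U_1(t)|^2 \;=\; E_\beta(-\mu_1 t^\beta)^2\langle u_0,\phi_1\rangle^2 \;+\; \lambda^2\int_0^t E_\beta(-\mu_1(t-s)^\beta)^2\int_B\phi_1(y)^2\,\E|\sigma(u_s(y))|^2\,\d y\,\d s.
\end{equation*}

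The next step is to close this into a scalar inequality for $V(t):=\E|U_1(t)|^2$ alone. The lower Lipschitz bound $|\sigma(u)|^2\geq l_\sigma^2 u^2$ combined with the Cauchy--Schwarz estimate $U_1(s)^2=\bigl(\int_B 1\cdot\phi_1(y) u_s(y)\,\d y\bigr)^2\leq |B|\int_B\phi_1(y)^2 u_s(y)^2\,\d y$ gives $\int_B\phi_1(y)^2\E|u_s(y)|^2\,\d y\geq V(s)/|B|$, whence
\begin{equation*}
V(t)\;\geq\; E_\beta(-\mu_1 t^\beta)^2\langle u_0,\phi_1\rangle^2 \;+\;\frac{\lambda^2 l_\sigma^2}{|B|}\int_0^t E_\beta(-\mu_1(t-s)^\beta)^2\,V(s)\,\d s.
\end{equation*}
Setting $\tilde V(\mu):=\int_0^\infty e^{-\mu t}V(t)\,\d t$ and $\tilde M(\mu):=\int_0^\infty e^{-\mu t}E_\beta(-\mu_1 t^\beta)^2\,\d t$ and taking the Laplace transform (legitimate for $\mu$ above the a priori exponential bound $c_2\lambda^{2\alpha/(\alpha-\beta d)}$ recalled from \cite{FMNane}), the inequality becomes $\tilde V(\mu)\bigl[1-\lambda^2 l_\sigma^2\tilde M(\mu)/|B|\bigr]\geq\tilde V_0(\mu)>0$; since $\tilde V\geq 0$, whenever $\lambda^2 l_\sigma^2\tilde M(\mu)/|B|>1$ this forces $\tilde V(\mu)=+\infty$, which in turn yields $\limsup_{t\to\infty}t^{-1}\log V(t)\geq\mu$.

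The restriction $\beta\in(\tfrac12,1)$ enters decisively in the finiteness of $\tilde M(0)$: the Mittag-Leffler asymptotic $E_\beta(-x)\sim 1/(\Gamma(1-\beta)x)$ as $x\to\infty$ combined with the substitution $u=\mu_1 t^\beta$ gives $\tilde M(0)=C_\beta\,\mu_1^{-1/\beta}$ with $C_\beta<\infty$ precisely when $2\beta>1$ (the integral diverging at $\beta=1/2$ is exactly what produces unconditional exponential growth in Theorem~\ref{Thrm-1}). One then defines
\[
\lambda_u\;:=\;\sqrt{\tfrac{|B|}{l_\sigma^2\,\tilde M(0)}}\;>\;0 .
\]
For any $\lambda>\lambda_u$ we have $\lambda^2 l_\sigma^2\tilde M(0)/|B|>1$, and by continuity of $\tilde M$ at $0$ there is some $\mu_\star>0$ with $\lambda^2 l_\sigma^2\tilde M(\mu_\star)/|B|>1$, forcing $\tilde V(\mu_\star)=+\infty$. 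A further application of Cauchy--Schwarz together with $\int_B\phi_1^2=1$ yields $V(t)\leq|B|\sup_{x\in B}\E|u_t(x)|^2$, and hence $\limsup_{t\to\infty}t^{-1}\log\sup_{x\in B}\E|u_t(x)|^2\geq\mu_\star>0$, which is the desired exponential growth. The main technical difficulty I anticipate is the need for sharp two-sided Mittag-Leffler estimates, uniform in $\mu\geq 0$, in order to justify both the finiteness and the continuity of $\tilde M$ near $0$; I will appeal to the standard bounds used in \cite{Nane1,FMNane}. The non-degeneracy $\langle u_0,\phi_1\rangle>0$ needed to make $\tilde V_0$ strictly positive follows at once from the assumption that $u_0\geq 0$ is supported on a set of positive measure in $B$ together with $\phi_1>0$ throughout $B$.
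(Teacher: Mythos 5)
Your proposal is correct and follows essentially the same route as the paper: projecting onto the principal eigenfunction $\varphi_1$, applying the Walsh isometry and the lower Lipschitz bound to get a renewal-type inequality for $\E\langle u_t,\varphi_1\rangle^2$, taking the Laplace transform, and exploiting that $\Lambda(\theta)=\int_0^\infty e^{-\theta t}E_\beta(-\mu_1 t^\beta)^2\,\d t$ is finite when $\beta>\tfrac12$ to choose $\lambda_u$ so that the transform must blow up, then passing to $\sup_{x\in B}\E|u_t(x)|^2$ by Cauchy--Schwarz. Your version merely makes explicit the constants (the $|B|$ factor from Cauchy--Schwarz and the threshold $\lambda_u$ in terms of $\tilde M(0)$) that the paper absorbs into $\gtrsim$ and a fixed $\theta>0$.
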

Since the Mittage-Leffler function $E_{\beta}(-t^\beta)$ (Gorenfeo \textit{et al.}\cite{Gorenflo}) behaves as a stretched exponential for $t\rightarrow 0$;
\begin{equation*}
    E_{\beta}(-t^\beta)  \backsimeq 1-\frac{t^\beta}{\Gamma(\beta +1)} \backsimeq e^{-\nicefrac{t^\beta}{\Gamma (\beta + 1)}},\, \text{for} \,0<t\leq 1 ,
\end{equation*}
and as a polynomial decay for $t\rightarrow \infty$,
\begin{equation*}
    E_{\beta}(-t^\beta)  \backsimeq \frac{\sin ({\beta \pi})}{\pi} \frac{\Gamma (\beta)}{t^\beta}, \text{for} \,t \geq 1,
\end{equation*}
the polynomial decay behaviour  of $E_{\beta}(-t^\beta)$ illustrates the need for the sharp condition of $\beta \in (0,\, \frac{1}{2}]$ in Theorem \ref{Thrm-1}. So the representation $G_B^{(\beta)}$  in equation \eqref{JNA-4} which is defined interms of $E_{\beta}(.)$ is crucial to our results.
\begin{theorem}\label{Thrm-3}
In Theorem \ref{Thrm-1}, suppose that $\beta\in (\frac{1}{2},\,1)$.
Suppose also that either  $d<\alpha/2\beta$ or  $\{\varphi_n\}_{n\geq1}$ are uniformly bounded by a constant $C(B)$, then there exist a strictly positive real number $\lambda_l$ such that for $\lambda<\lambda_l$  the quantity  $\sup_{t>0}\sup_{x\in B} \E|u_t(x)|^2$ is finite.
\end{theorem}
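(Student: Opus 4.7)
The approach is to convert the mild formulation \eqref{JNA-9} into a renewal-type integral inequality for $M(t) := \sup_{x \in B}\E|u_t(x)|^2$, and then to choose $\lambda_l$ so that the associated convolution kernel has total mass strictly less than one. Applying the Walsh isometry to \eqref{JNA-9} with $|\sigma(u)| \le L_\sigma |u|$, and noting $(\mathcal{G}_B^{(\beta)} u_0)_t(x) \le \|u_0\|_\infty$, one obtains
\begin{equation*}
M(t) \le 2\|u_0\|_\infty^2 + 2\lambda^2 L_\sigma^2 \int_0^t M(s)\,\bar K(t-s)\, ds,
\qquad \bar K(u) := \sup_{x \in B}\int_B G_B^{(\beta)}(u,x,y)^2\, dy.
\end{equation*}
The heart of the proof is then to establish $K_0 := \int_0^\infty \bar K(u)\, du < \infty$ under either of the two hypotheses.

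Under (a), $d < \alpha/(2\beta)$, I would split the $u$-integral at $1$. For $u \le 1$, the subordination representation $G_B^{(\beta)}(u,x,y) \le \int_0^\infty p^\alpha(s,x,y) f_{E_u}(s)\, ds$ combined with $p^\alpha(s,x,y) \le C s^{-d/\alpha}$ and $\E[E_u^{-d/\alpha}] = C' u^{-\beta d/\alpha}$ yields the pointwise bound $G_B^{(\beta)}(u,x,y) \le C u^{-\beta d/\alpha}$, so $\bar K(u) \le C |B|\, u^{-2\beta d/\alpha}$, which is integrable on $(0,1]$ precisely when $d < \alpha/(2\beta)$. For $u \ge 1$, the Mittag-Leffler asymptotic bound $E_\beta(-x) \le c/(1+x)$ applied to the spectral expansion $G_B^{(\beta)}(u,x,y) = \sum_n E_\beta(-\mu_n u^\beta)\varphi_n(x)\varphi_n(y)$ gives $G_B^{(\beta)}(u,x,y) \le C u^{-\beta} G_B^\alpha(x,y)$, where $G_B^\alpha$ is the Dirichlet Green function of $(-\Delta)^{\alpha/2}$; since the hypothesis forces $d < \alpha < 2\alpha$, one has $\sup_x \int_B G_B^\alpha(x,y)^2\, dy < \infty$, whence $\bar K(u) \le C'' u^{-2\beta}$, integrable on $[1,\infty)$ exactly when $\beta > 1/2$. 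Under (b), $|\varphi_n| \le C(B)$, I would proceed directly from the spectral expansion: by orthonormality of $\{\varphi_n\}$ and Fubini,
\begin{equation*}
\int_0^\infty \bar K(u)\, du \le C(B)^2 \sum_n \int_0^\infty E_\beta(-\mu_n u^\beta)^2\, du = \frac{C(B)^2}{\beta}\Bigl(\int_0^\infty v^{1/\beta-1} E_\beta(-v)^2\, dv\Bigr)\sum_n \mu_n^{-1/\beta},
\end{equation*}
where the $v$-integral is finite thanks to $\beta > 1/2$ and the spectral sum converges by Weyl's law $\mu_n \asymp n^{\alpha/d}$ combined with the standing existence condition $d < \alpha/\beta$.

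With $K_0 < \infty$ in hand, set $\lambda_l := (2 L_\sigma^2 K_0)^{-1/2}$, so that $\rho := 2\lambda^2 L_\sigma^2 K_0 < 1$ for every $\lambda < \lambda_l$. Conclude by Picard iteration: put $u^{(0)}_t(x) := (\mathcal{G}_B^{(\beta)} u_0)_t(x)$ and define $u^{(n+1)}$ by replacing $u$ in the stochastic integral term of \eqref{JNA-9} by $u^{(n)}$; the same Walsh-isometry estimate gives $A_{n+1} \le 2\|u_0\|_\infty^2 + \rho A_n$ for $A_n := \sup_{t>0}\sup_x \E|u^{(n)}_t(x)|^2$, and induction yields $A_n \le 2\|u_0\|_\infty^2/(1-\rho)$ uniformly in $n$. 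Passing to the $L^2$-limit $u^{(n)} \to u$ transfers this bound to the solution and proves $\sup_{t>0}\sup_{x\in B}\E|u_t(x)|^2 < \infty$. The main obstacle is the two-regime estimate for $\bar K$ under hypothesis (a): neither the subordination-based pointwise bound nor the spectral Mittag-Leffler bound is integrable over all of $(0,\infty)$ on its own, and matching them requires $d < \alpha/(2\beta)$ at small $u$, $\beta > 1/2$ at large $u$, and the fractional Dirichlet Green-function $L^2$-estimate to bridge the two regimes.
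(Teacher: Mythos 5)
Your proposal is correct in substance, and its closing step (the renewal inequality with kernel mass $\rho=2\lambda^2L_\sigma^2K_0<1$, settled by Picard iteration) is a slightly more careful version of the paper's absorption argument $\sup\E|u_t|^2\lesssim 1+\tfrac12\sup\E|u_t|^2$, which implicitly relies on local finiteness of the second moment from the existence theorem. For the uniformly bounded eigenfunction case you essentially reproduce the paper's route: by Parseval, $\int_B G_B^{(\beta)}(u,x,y)^2\,\d y=\sum_n E_\beta(-\mu_n u^\beta)^2\varphi_n(x)^2$, and your identity $\int_0^\infty E_\beta(-\mu_n u^\beta)^2\,\d u=c_\beta\,\mu_n^{-1/\beta}$ together with $\mu_n\asymp n^{\alpha/d}$ is exactly the content of Lemmas \ref{JNA-bd1} and \ref{JNA-45}. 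Where you genuinely diverge is the case $d<\alpha/(2\beta)$: the paper stays spectral, invoking the Blumenthal--Getoor bound $|\varphi_n(x)|\le C\mu_n^{d/(2\alpha)}$ of Lemma \ref{Lema-1} so that $\sum_n\mu_n^{d/\alpha-1/\beta}<\infty$ precisely when $d<\alpha/(2\beta)$ (Lemma \ref{JNA-45b}), whereas you use a two-regime heat-kernel estimate. Your small-$u$ bound via subordination and $\E[E_u^{-d/\alpha}]=C u^{-\beta d/\alpha}$ is fine. The one step that does not work as written is the large-$u$ bound: applying $E_\beta(-x)\le c/(1+x)$ termwise in the spectral expansion does not yield $G_B^{(\beta)}(u,x,y)\le Cu^{-\beta}G_B^{\alpha}(x,y)$, because the products $\varphi_n(x)\varphi_n(y)$ are signed, so after taking absolute values the resulting series is no longer the Green function. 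The inequality itself is true and can be rescued from the subordination identity $G_B^{(\beta)}(u,x,y)=\int_0^\infty p_B(s,x,y)f_u(s)\,\d s$ together with the scaling bound $f_u(s)=u^{-\beta}f_1(su^{-\beta})\le Cu^{-\beta}$; alternatively, and closer to the paper, Parseval plus $\varphi_n(x)^2\le C\mu_n^{d/\alpha}$ gives directly $\sup_x\int_B G_B^{(\beta)}(u,x,y)^2\,\d y\le Cu^{-2\beta}\sum_n\mu_n^{d/\alpha-2}<\infty$ since $d<\alpha$, avoiding the Green function altogether. With that repair, your two-regime argument buys a proof that does not need the eigenfunction bound in its sharp form for all $u$, while the paper's purely spectral argument is shorter and handles both hypotheses with the same lemma.
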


\begin{remark}
  The assumption of a uniform bound $C(B)$ on the eigenfunctions $\{\varphi_n\}_{n\geq1}$ look artificial, however there is no known bound for these eigenfunctions except the cases given in Lemma \ref{Lema-1} and the uniform bounds as in Example \ref{Ex-1} for the Brownian motion in higher dimensional rectangular boxes.
\end{remark}
From the  three results above in line with the interpretation given in Foondun \cite{Foondun2}, we note that whenever $\beta\in(0,\frac{1}{2}]$, the killed time changed $\alpha$-stable process $X_{E_t}^B$ defined in Section \ref{sect-prelim} reaches the boundary of $B$ slower which allows the non-linear term to grow exponentially for any $\lambda$ when $t$ becomes large. On the other hand, for $\beta \in (\frac{1}{2},\,1)$, this process does not have time to generate such growth unless $\lambda$ is large enough, since it proceeds quickly enough to the boundary.\\
Fractional parameters play an important role in many problems involving space-time fractional equations. However, in modeling problems, these fractional parameters are unknown a priori. As a result, continuity of the solutions with respect to these parameters is essential for modeling purposes.
The following continuity theorem of the unique solution $u_t^{\beta}(x)$ of equation \eqref{Dang} with respect to the parameter $\beta$ is Theorem 4.3(b) of Dang $et \,al.$ \cite{Dang}.

\begin{theorem}[Dang, Nane and Nguyen \cite{Dang}]\label{Dang}
Let $u_t^{(\gamma)}$ and $u_t^{(\beta)}$ denote the solution to the following equation for parameters $\gamma, \beta \in \left (0,\,1 \right)$  with $\gamma \rightarrow \beta$. The initial condition $u_0$ is the same for both equations.
\begin{equation}\label{Dang2}
\left \{\begin{aligned}
 \frac{\partial^\beta}{\partial t^\beta} u_t(x) &= \Delta u_t(x),\,x \in B,\,t>0, \\
u_t(x)&=0, \, x \in \partial D, \, t>0,\\
 u(0,\,x)&=f(x),\,x \in B.
\end{aligned}\right.
\end{equation}
Then, we have
\begin{equation*}
\lim_{\gamma \rightarrow \beta}||u_t^{(\gamma)}(\text{x})- u_t^{(\beta)}(\text{x})||_H^2=0.
\end{equation*}
where $H$ is the Hilbert space of all functions with the bounded norm induced by
$$||f||_H= \sqrt{\sum_{k=1}^{\infty} \mu_k^2|\langle f,\,\varphi_k \rangle|^2}.$$
\end{theorem}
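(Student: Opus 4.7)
The plan is to expand both $u_t^{(\gamma)}$ and $u_t^{(\beta)}$ in the Dirichlet eigenbasis of $-\Delta$ on $B$, reduce the squared $H$-norm of their difference to a single numerical series in which every summand depends continuously on the fractional parameter, and conclude by dominated convergence.

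\textbf{Step 1 (spectral representation).} Let $\{(\mu_k,\varphi_k)\}_{k\geq 1}$ denote the Dirichlet eigenpairs of $-\Delta$ on $B$, with $\{\varphi_k\}$ orthonormal in $L^2(B)$. Separation of variables together with the fact that the scalar Caputo equation $\partial_t^\beta y_k(t)=-\mu_k y_k(t)$ with $y_k(0)=c_k:=\langle f,\varphi_k\rangle$ is solved by $c_k E_\beta(-\mu_k t^\beta)$ gives the mild solution of \eqref{Dang2} in the form
\begin{equation*}
u_t^{(\beta)}(x)\;=\;\sum_{k=1}^\infty E_\beta(-\mu_k t^\beta)\,c_k\,\varphi_k(x).
\end{equation*}
An identical formula holds for $u_t^{(\gamma)}$ with $\beta$ replaced by $\gamma$ (the initial data $f$, hence $c_k$, is common).

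\textbf{Step 2 (reduction to a scalar series).} Substituting into the definition of $\|\cdot\|_H$ and using orthonormality yields
\begin{equation*}
\|u_t^{(\gamma)}-u_t^{(\beta)}\|_H^{2}\;=\;\sum_{k=1}^\infty \mu_k^{2}\,c_k^{2}\,\bigl|E_\gamma(-\mu_k t^\gamma)-E_\beta(-\mu_k t^\beta)\bigr|^{2}.
\end{equation*}

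\textbf{Step 3 (termwise convergence and domination).} For each fixed $k\geq 1$ and $t>0$ the map $\gamma\mapsto E_\gamma(-\mu_k t^\gamma)$ is continuous: the power series defining $E_\gamma$ converges uniformly on compact subsets of $\gamma\in(0,1)$ (for any fixed argument), and $\gamma\mapsto t^\gamma$ is continuous, so a two-step estimate $E_\gamma(-\mu_k t^\gamma)\to E_\beta(-\mu_k t^\gamma)\to E_\beta(-\mu_k t^\beta)$ as $\gamma\to\beta$ applies. Hence every summand tends to $0$. For the interchange of limit and sum we invoke the standard bound $0\leq E_\gamma(-x)\leq 1$ valid for all $x\geq 0$ and $\gamma\in(0,1)$, which yields the uniform (in $\gamma$) domination
\begin{equation*}
\mu_k^{2}\,c_k^{2}\,\bigl|E_\gamma(-\mu_k t^\gamma)-E_\beta(-\mu_k t^\beta)\bigr|^{2}\;\leq\;4\,\mu_k^{2}\,c_k^{2},
\end{equation*}
and $\sum_k \mu_k^{2}c_k^{2}=\|f\|_H^{2}<\infty$ by hypothesis $f\in H$. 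Dominated convergence delivers the conclusion $\|u_t^{(\gamma)}-u_t^{(\beta)}\|_H^{2}\to 0$ as $\gamma\to\beta$.

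\textbf{Main obstacle.} The only delicate ingredient is the joint continuity of $(\gamma,x)\mapsto E_\gamma(x)$ combined with the fact that the argument $-\mu_k t^\gamma$ itself depends on $\gamma$; this is why the convergence above is split into two steps. A uniform dominating bound that is independent of $\gamma$ is indispensable because the eigenvalues $\mu_k\to\infty$, so any estimate relying on smallness of $E_\gamma(-\mu_k t^\gamma)-E_\beta(-\mu_k t^\beta)$ cannot be used without such a bound; the trivial estimate $0\leq E_\gamma(-x)\leq 1$ together with the assumption $f\in H$ suffices. Everything else is routine.
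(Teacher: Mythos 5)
Your proposal is correct: the eigenfunction expansion $u_t^{(\beta)}=\sum_k E_\beta(-\mu_k t^\beta)\langle f,\varphi_k\rangle\varphi_k$, the reduction of $\|u_t^{(\gamma)}-u_t^{(\beta)}\|_H^2$ to a scalar series, termwise continuity of $\gamma\mapsto E_\gamma(-\mu_k t^\gamma)$, and dominated convergence together give the stated limit for each fixed $t>0$. Note that this paper does not actually prove Theorem \ref{Dang}; it quotes it from Dang, Nane and Nguyen \cite{Dang}. Your argument is nevertheless essentially the same technique the paper uses for its own continuity statement, Lemma \ref{JNA-27}: there the termwise limit $E_\gamma(-\mu_n t^\gamma)\to E_\beta(-\mu_n t^\beta)$ is obtained via Laplace transforms (your direct power-series/locally uniform convergence argument is an acceptable substitute), and the interchange of limit and sum is justified by a $\gamma$-independent summable bound. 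One small caveat: your domination uses $\sum_k\mu_k^2|\langle f,\varphi_k\rangle|^2=\|f\|_H^2<\infty$, i.e. it implicitly assumes $f\in H$, which is the natural reading of the statement but is not spelled out; if one only has $f\in L^2(B)$, the same conclusion follows for fixed $t>0$ by replacing the crude bound $|E_\gamma(-x)|\le 1$ with the decay bound \eqref{JNA-6}, which gives $|E_\gamma(-\mu_k t^\gamma)|\lesssim \mu_k^{-1}t^{-\gamma}$ uniformly for $\gamma$ in a neighborhood of $\beta$, so that the factor $\mu_k^2$ in the $H$-norm is absorbed and the dominating series is $\lesssim\sum_k|\langle f,\varphi_k\rangle|^2$.
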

\begin{remark}
  Meerschaert \textit{et al.} \cite{Meerschaert4} established the existence of a unique solution for equation \eqref{Dang2}.
\end{remark}
The following continuity theorem of the unique solution $u_t^{\beta}(x)$ of equation \eqref{JNA001} with respect to the parameter $\beta$ is Theorem 1.3 of Foondun \cite{Foondun2}.
\begin{theorem} [Foondun  \cite{Foondun2}]
Let $u_t^{(\beta)}$ and $u_t$ denote the solution to equation \eqref{JNA001} and the solution to equation \eqref{JNA001} for $\beta =1$ respectively. The initial condition $u_0$ is the same for both equations. Then, for any $p\geq 2$, we have
\begin{equation*}
\lim_{\beta\rightarrow 1} \sup_{\text{x} \in [0,\,L]}\E|u_t(x)-
u_t^{(\beta)}(x)|^p = 0.
\end{equation*}
\end{theorem}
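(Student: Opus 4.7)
The plan is to subtract the two mild formulations in \eqref{JNA001a} (for parameter $\beta\in(0,1)$ and for $\beta=1$) and estimate $\E|u_t(x)-u_t^{(\beta)}(x)|^p$ by decomposing the difference into three natural pieces,
\[
u_t(x)-u_t^{(\beta)}(x)=D_1(t,x)+D_2(t,x)+D_3(t,x),
\]
where $D_1$ carries the difference of initial-data propagators, $D_2$ is the stochastic convolution of $\sigma(u_s)$ against the \emph{kernel difference} $p_B-p_B^{\beta}$, and $D_3$ is the stochastic convolution of $\sigma(u_s)-\sigma(u_s^{(\beta)})$ against the fractional kernel $p_B^{\beta}$. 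The strategy is to show that $D_1$ and $D_2$ vanish in $L^p$ uniformly in $x$ as $\beta\to 1$, and then to close a Gronwall-type estimate for $D_3$.

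For $D_1(t,x)=\int_0^L[p_B(t,x,y)-p_B^{\beta}(t,x,y)]u_0(y)\,dy$, I would use the eigenfunction expansion $p_B^{\beta}(t,x,y)=\sum_n E_{\beta}(-\mu_n t^{\beta})\varphi_n(x)\varphi_n(y)$ together with the termwise limit $E_{\beta}(-\mu t^{\beta})\to e^{-\mu t}$ as $\beta\uparrow 1$; dominated convergence, using Gaussian-type bounds for $p_B$ and the uniform kernel bounds on the bounded domain from \cite{FMNane}, upgrades the limit to uniform convergence in $x$. For $D_2$, Burkholder--Davis--Gundy gives
\[
\E|D_2(t,x)|^p \leq C_p\lambda^p\Bigl(\int_0^t\!\!\int_0^L[p_B(t-s,x,y)-p_B^{\beta}(t-s,x,y)]^2\,\E|\sigma(u_s(y))|^2\,dy\,ds\Bigr)^{p/2},
\]
and the exponential moment bound $\sup_y\E|u_s(y)|^2\leq c_1 e^{c_2 s\lambda^{2}}$ for the $\beta=1$ solution, together with the linear growth of $\sigma$, produces an integrable majorant, so another application of dominated convergence yields $\sup_{x\in[0,L]}\E|D_2(t,x)|^p\to 0$.

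The remaining term $D_3$ is handled by BDG and the Lipschitz property of $\sigma$: setting $F_\beta(t):=\sup_{x\in[0,L]}\E|u_t(x)-u_t^{(\beta)}(x)|^p$, one obtains a Volterra inequality
\[
F_\beta(t)\leq \eta(\beta)+C\lambda^p\int_0^t K_\beta(t-s)F_\beta(s)\,ds,
\]
with $\eta(\beta)=\sup_x\E|D_1(t,x)|^p+\sup_x\E|D_2(t,x)|^p\to 0$ and singular but integrable kernel $K_\beta(t-s)$ controlled by $\int_0^L[p_B^{\beta}(t-s,x,y)]^2\,dy$. A fractional Gronwall lemma then forces $F_\beta(t)\to 0$. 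I expect the main obstacle to be obtaining uniform-in-$\beta$ integrability of $[p_B^{\beta}(t-s,x,y)]^2$ near the diagonal $s=t$---the small-time singularity of $p_B^{\beta}$ is governed by the small-argument behaviour of the Mittag--Leffler factors $E_{\beta}(-\mu_n t^{\beta})$ and thus depends delicately on $\beta$---so one has to exploit the kernel estimates from \cite{FMNane} uniformly in a neighbourhood of $\beta=1$ rather than pointwise at fixed $\beta$, and match them with the $\beta=1$ Gaussian bound without losing the integrability needed for the BDG step.
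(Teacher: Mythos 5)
This statement is quoted from Foondun \cite{Foondun2} and is not reproved in the paper, but your scheme coincides with the paper's proof of its own analogue, Theorem \ref{Thrm-4}: the same three-term decomposition, dominated convergence for the deterministic part and for the stochastic term driven by the kernel difference (justified there by kernel continuity and a uniform-in-parameter moment bound), and BDG plus the Lipschitz property of $\sigma$ to close the remaining term --- the only difference being that you finish with a Volterra--Gronwall inequality, whereas the paper (and Foondun) absorb that term via the weighted quantity $\sup_{0<s<t}\sup_{x}e^{-\theta s}\E|u_s(x)-u_s^{(\beta)}(x)|^p$ with $\theta$ chosen large, two interchangeable devices. Two minor points: for $p>2$ the BDG/Minkowski step should carry $\bigl[\E|\sigma(u_s(y))|^p\bigr]^{2/p}$ inside the double integral rather than $\E|\sigma(u_s(y))|^2$, and in this setting ($\alpha=2$, $d=1$) the uniformity in $\beta$ you flag as the main obstacle is harmless, since the square-integrated kernel is of order $(t-s)^{-\beta/2}\leq 1+(t-s)^{-1/2}$ uniformly for $\beta$ near $1$.
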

In the next theorem, we show the continuity of the solution $u_t(x)$ of \eqref{JNA-9} with respect to the fractional parameter $\beta$.

\begin{theorem}\label{Thrm-4} Assume that $\{\varphi_n\}_{n\geq1}$ are uniformly bounded by a constant $C(B)$. For $d<\frac{1}{2} \min\{\frac{1}{\beta},\,\frac{1}{\gamma}\}\alpha$, let $u_t^{(\gamma)}$ and $u_t^{(\beta)}$ denote solutions to equation \eqref{JNA-9} for parameters $\beta, \gamma\in \left (\frac{1}{2},\,1 \right)$ respectively with $\gamma \rightarrow \beta$. The initial condition $u_0$ is the same for both equations. Then, for any $p\geq 2$, we have
\begin{equation*}
\lim_{\gamma \rightarrow \beta}\sup_{\text{x} \in B}\E|u_t^{(\gamma)}(\mathrm{x})- u_t^{(\beta)}(\mathrm{x})|^p=0.
\end{equation*}
\end{theorem}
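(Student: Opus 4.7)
\textbf{Proof proposal for Theorem \ref{Thrm-4}.}

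The plan is to subtract the two mild formulations, decompose the difference into three pieces, and close the argument with a (fractional) Gronwall inequality. Concretely, using \eqref{JNA-9} for both parameters and adding/subtracting $G_B^{(\beta)}(t-s,x,y)\sigma(u_s^{(\gamma)}(y))$ inside the stochastic integral, I write
\begin{equation*}
u_t^{(\gamma)}(x)-u_t^{(\beta)}(x)=I_1(t,x)+I_2(t,x)+I_3(t,x),
\end{equation*}
where $I_1$ collects the initial-condition difference $\int_B[G_B^{(\gamma)}(t,x,y)-G_B^{(\beta)}(t,x,y)]u_0(y)\,\d y$, $I_2$ is the stochastic integral of the kernel difference $[G_B^{(\gamma)}-G_B^{(\beta)}]\sigma(u^{(\gamma)})$ against $W$, and $I_3$ is the stochastic integral of $G_B^{(\beta)}[\sigma(u^{(\gamma)})-\sigma(u^{(\beta)})]$ against $W$. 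Taking $p$-th moments and applying Minkowski and the Burkholder–Davis–Gundy inequality reduces everything to controlling the three expressions separately.

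The first step is continuity of the kernel in $\beta$. Using the spectral representation $G_B^{(\beta)}(t,x,y)=\sum_{n\geq 1}E_\beta(-\mu_n t^\beta)\varphi_n(x)\varphi_n(y)$, the uniform eigenfunction bound $|\varphi_n|\leq C(B)$ and the joint continuity of $(\beta,z)\mapsto E_\beta(z)$ give, by dominated convergence on $\ell^2$ with the summability provided by the condition $d<\tfrac12\min\{1/\beta,1/\gamma\}\alpha$, that for each fixed $t>0$ and $x,y\in B$, $G_B^{(\gamma)}(t,x,y)\to G_B^{(\beta)}(t,x,y)$ as $\gamma\to\beta$. More importantly, the same condition — this is exactly why the upper bound $d<\tfrac{\alpha}{2\beta}$ (resp.\ $d<\tfrac{\alpha}{2\gamma}$) appears — ensures $\int_0^t\!\int_B |G_B^{(\beta)}(s,x,y)|^2\d y\,\d s<\infty$ uniformly in $x$, with a dominating function valid for $\gamma$ in a neighbourhood of $\beta$. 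This yields $\sup_{x\in B}|I_1(t,x)|\to 0$ and, combined with the a priori moment bound $\sup_{s\leq t,x\in B}\E|u_s^{(\gamma)}(x)|^p<\infty$ (which follows from the existence/uniqueness result cited from \cite{FMNane} together with the linear growth of $\sigma$), the BDG estimate yields
\begin{equation*}
\sup_{x\in B}\E|I_2(t,x)|^p\leq C_p\left(\int_0^t\int_B|G_B^{(\gamma)}(t-s,x,y)-G_B^{(\beta)}(t-s,x,y)|^2\,\d y\,\d s\right)^{p/2}\!\!\!\cdot\sup_{s,y}\E|\sigma(u_s^{(\gamma)}(y))|^p,
\end{equation*}
and the kernel $L^2$-difference vanishes as $\gamma\to\beta$ by dominated convergence.

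For $I_3$, BDG together with the Lipschitz property $|\sigma(a)-\sigma(b)|\leq L_\sigma|a-b|$ gives
\begin{equation*}
\E|I_3(t,x)|^p\leq C_p L_\sigma^p\left(\int_0^t\!\int_B |G_B^{(\beta)}(t-s,x,y)|^2\,\sup_{z\in B}\E|u_s^{(\gamma)}(z)-u_s^{(\beta)}(z)|^2\,\d y\,\d s\right)^{p/2}.
\end{equation*}
Setting $F(t):=\sup_{x\in B}\E|u_t^{(\gamma)}(x)-u_t^{(\beta)}(x)|^p$ and using Jensen on the interior $p/2$-th power, one arrives at an inequality of the form $F(t)\leq \varepsilon(\gamma,t)+K\int_0^t K_\beta(t-s)F(s)\,\d s$, where $\varepsilon(\gamma,t)\to 0$ as $\gamma\to\beta$ and $K_\beta(\cdot)$ is integrable on $[0,t]$ thanks to the moment condition. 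A standard (singular) Gronwall lemma then closes the argument and delivers $\sup_{x\in B}\E|u_t^{(\gamma)}(x)-u_t^{(\beta)}(x)|^p\to 0$.

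The main obstacle will be the kernel-continuity step: proving $\int_0^t\!\int_B|G_B^{(\gamma)}-G_B^{(\beta)}|^2\d y\,\d s\to 0$ uniformly in $x\in B$. The spectral sum is not absolutely convergent in $n$ for short times, so the argument must balance Mittag–Leffler tail estimates (polynomial decay $E_\beta(-\mu_n s^\beta)\asymp 1/(\mu_n s^\beta)$ for large $\mu_n s^\beta$, stretched-exponential for small) against the growth of the Dirichlet eigenvalues $\mu_n\asymp n^{\alpha/d}$. This is exactly where the hypothesis $d<\tfrac{1}{2}\min\{1/\beta,1/\gamma\}\alpha$ enters, since it upgrades the $\ell^2$-summability of $E_\beta(-\mu_n s^\beta)^2$ on $[0,t]$ to something locally uniform in $\beta$, allowing interchange of limit and integration via dominated convergence.
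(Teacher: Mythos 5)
Your decomposition is exactly the paper's: the same three terms (initial-kernel difference, stochastic integral of the kernel difference against $\sigma(u^{(\gamma)})$, and stochastic integral of $G_B^{(\beta)}$ against the Lipschitz increment), the same BDG/Minkowski reduction, and the same key inputs --- pointwise continuity of the kernel in the fractional parameter (the paper's Lemma \ref{JNA-27}), an integrable-in-time $L^2$ kernel bound with constants independent of the parameter (Lemma \ref{nane-tuan}, Lemma \ref{Lemma-2}), and dominated convergence. The only structural difference is the closing device: you run a singular Gronwall argument on $F(t)=\sup_{x\in B}\E|u_t^{(\gamma)}(x)-u_t^{(\beta)}(x)|^p$, whereas the paper multiplies by $e^{-\theta t}$, works with $b_t(\theta)=\sup_{0<s<t}\sup_{x\in B}e^{-\theta s}\E|u_s^{(\gamma)}(x)-u_s^{(\beta)}(x)|^p$, and absorbs the Lipschitz term by taking $\theta$ large. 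These are interchangeable: your version closes because $\sup_{x}\int_B|G_B^{(\beta)}(r,x,y)|^2\,\d y\lesssim r^{-\beta d/\alpha}$ (killed kernel dominated by the free one plus Lemma \ref{Lemma-2}) is integrable on $[0,t]$ under $d<\alpha/(2\beta)$.

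The one place where your write-up is thinner than it needs to be is the $I_2$ term: the factor $\sup_{s\le t,\,y\in B}\E|\sigma(u_s^{(\gamma)}(y))|^p$ depends on $\gamma$, and since it multiplies a quantity that merely tends to $0$ as $\gamma\to\beta$, you need this moment bound to be bounded \emph{uniformly for $\gamma$ in a neighbourhood of $\beta$}; finiteness for each fixed $\gamma$, which is all the citation to \cite{FMNane} provides (its constants a priori depend on the fractional parameter), does not by itself give the limit. The paper handles precisely this with the proposition containing \eqref{JNA-42}, which shows that $\sup_{t>0,\,x\in B}e^{-\theta t}\E|u_t^{(\beta)}(x)|^p$ is bounded by a constant independent of the parameter, using the $\beta$-uniform kernel bounds of Lemma \ref{nane-tuan}; the same uniformity is also what underwrites your ``dominating function valid for $\gamma$ in a neighbourhood of $\beta$'' in the kernel $L^2$-difference step. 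Add that uniform a priori bound (or prove it along the paper's lines) and your argument is complete and essentially identical to the paper's.
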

Here we observe that Theorem \ref{Thrm-4} extends Theorem 1.3 in Foondun \cite{Foondun2} to fractional Laplacian case, and Theorem 4.3(b) of Dang \textit{et al.}  \cite{Dang} to stochastic case.

The other class of equations that we consider in this paper is equation with space colored noise stated as
\begin{equation}\label{JNA-8-b}
\left \{\begin{aligned}
&\partial^\beta_t u_t(\text{x})= -\left(-\Delta \right)^{\frac{\alpha}{2}} u_t(\text{x})+I_t^{1-\beta}[\lambda \sigma(u_t(\text{x}))\dot{F}(t,\,\text{x})]\;\; \text{for}\;\; \text{\text{x}} \in B\;\;\text{and}\; \;t>0\\
&u_t(\text{x})=0 \quad \text{for} \quad \text{x} \notin B\;\;\text{and}\; \;t>0,
\end{aligned}\right.
\end{equation}
and the initial condition $u_0:B \rightarrow \R_+$ is  a non-random measurable and bounded
function that has support with positive measure inside B. The operator $-\left(-\Delta \right)^{\frac{\alpha}{2}}$, where $0< \alpha \leq 2$, is the $L^2$-generator of a symmetric $\alpha$-stable process $X_t^B$ killed when exiting $B$. The function $\sigma:\R\rightarrow \R$ is a globally Lipschitz function satisfying $l_\sigma|\text{x}|\leq  |\sigma(\text{x})| \leq L_\sigma|\text{x}|$ where $l_\sigma$ and $L_\sigma$ are positive constants. The positive parameter $\lambda$ is called {\it the level of the noise}.\\
The noise $\dot{F}(t,\,x)$ is white in time and colored in space satisfying
\begin{equation*}
\Cov(\dot{F}(t,\,x), \dot{F}(s,\,y))=\delta_0(t-s)f(x,y),
\end{equation*}
where $0<f(x,\,y) \leq g(x-y)$ and $g$ is a locally integrable function on $\R^d$ with possible singularity at $0$ satisfying
\begin{equation} \label{Dalang}
    \int_{\R^d} \frac{\hat{g}(\xi)}{1+|\xi|^{\alpha}} \d \xi < \infty,
\end{equation}
where $\hat{g}$ denotes the Fourier transform of $g$.

We will need the following non degeneracy condition on the spatial correlation of the noise.
\begin{assumption}\label{lowerbound}
Assume there exists some positive number $K_{f}$ such that
\begin{equation*}
\inf_{x,\,y\in B}f(x,\,y)\geq K_{f}.
\end{equation*}
\end{assumption}
This assumption is very mild as it is shown by the following examples.

\begin{example} For the following list of examples Assumption \ref{Dalang} is satisfied.
\begin{itemize}
\item Riesz Kernel:
\begin{equation*}
    f(x,y)=\frac{1}{|x-y|^{\gamma}}\,\,\text{with}\,\,\gamma < d \wedge \alpha .
\end{equation*}
\item The Exponential-type kernel: $f(x,y)= exp[-(x \cdot y)]$.
\item The Ornstein-Uhlenbeck-type kernels: $f(x,\,y)=exp[-|x-y|^\delta]$ with $\delta \in (0,\,2]$.
\item Poisson Kernels:
\begin{equation*}
    f(x,y)=\left (\frac{1}{|x-y|^2+1}\right )^{\frac{d+1}{2}}.
\end{equation*}
\item Cauchy Kernels:
\begin{equation*}
    f(x,y)=\sum_{j=1}^d \left (\frac{1}{1+(x_j-y_j)^2 }\right ).
\end{equation*}
\end{itemize}

\end{example}

\noindent Following Walsh \cite{Walsh}, $u_t$ is a mild solution to \eqref{JNA-8-b} if
\begin{equation}\label{JNA-9-b}
u_t(x)=
(\mathcal{G}_B^{(\beta)} u_0)_t(\text{x})+ \lambda \int_B\int_0^t G_B^{(\beta)}(t-s,\,\text{x},\,\text{y})\sigma(u_s(\text{y}))F(\d s\,,\d \text{y}),
\end{equation}
where  $G_B^{(\beta)} (t,\text{x},\text{y})$ denotes the heat kernel of the space-time fractional diffusion equation with Dirichlet boundary conditions in \eqref{JNA-8-b}, and
\begin{equation*}
(\mathcal{G}_B^{(\beta)} u_0)_t(x):=\int_B G_B^{(\beta)}(t,\,\text{x},\,\text{y}) u_0(\text{y})\,\d \text{y}.
\end{equation*}

\begin{theorem}\label{Thrm-5}
Suppose that the Dalang condition \eqref{Dalang} holds. Let $u_t$ denote the unique solution to \eqref{JNA-9}.  Then no matter what $\lambda$ is, the second moment of $u_t$ cannot decay exponentially fast. In fact, if we further assume that $\beta\in (0,\,\frac{1}{2}]$ and Assumption \ref{lowerbound} holds, then as $t$ gets large, $\sup_{x\in B} \E|u_t(x)|^2 $ grows exponentially fast for any $\lambda.$
\end{theorem}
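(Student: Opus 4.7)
The plan is to adapt the proof of Theorem \ref{Thrm-1} to the colored-noise setting, with Assumption \ref{lowerbound} supplying the pointwise bound $f(y,z)\ge K_{f}$ that plays the role of the delta-kernel of the white-noise case. Starting from the mild formulation \eqref{JNA-9-b} and applying the Walsh--Dalang isometry for colored noise, one obtains
\begin{equation*}
\E|u_t(x)|^2 = \bigl|(\mathcal G_B^{(\beta)}u_0)_t(x)\bigr|^2 + \lambda^2\!\int_0^t\!\int_B\!\int_B G_B^{(\beta)}(t-s,x,y)G_B^{(\beta)}(t-s,x,z)\E[\sigma(u_s(y))\sigma(u_s(z))]f(y,z)\,dy\,dz\,ds.
\end{equation*}
The second summand is nonnegative, because pathwise the inner integrand is a quadratic form against the positive-definite covariance kernel $f$. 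Hence $\E|u_t(x)|^2 \ge |(\mathcal G_B^{(\beta)}u_0)_t(x)|^2$, and expanding $G_B^{(\beta)}(t,x,y)=\sum_{n\ge 1}E_\beta(-\mu_n t^\beta)\varphi_n(x)\varphi_n(y)$ gives $(\mathcal G_B^{(\beta)}u_0)_t(x)\sim E_\beta(-\mu_1 t^\beta)\varphi_1(x)\langle u_0,\varphi_1\rangle$ as $t\to\infty$; the polynomial asymptotics $E_\beta(-\mu_1 t^\beta)\sim C/t^{\beta}$ recalled after Theorem \ref{Thrm-2} then yield $\E|u_t(x)|^2 \gtrsim t^{-2\beta}$ at any $x$ with $\varphi_1(x)>0$, ruling out exponential decay regardless of $\lambda$.

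For the exponential-growth statement under $\beta\in(0,\tfrac12]$, I follow the renewal-type strategy of Theorem \ref{Thrm-1}. Projecting the mild equation onto the positive principal eigenfunction and setting $v(t):=\E|\langle u_t,\varphi_1\rangle|^2$, orthonormality together with the Walsh--Dalang isometry give
\begin{equation*}
v(t) = E_\beta(-\mu_1 t^\beta)^2\langle u_0,\varphi_1\rangle^2 + \lambda^2\!\int_0^t\! E_\beta(-\mu_1(t-s)^\beta)^2\,Q(s)\,ds,
\end{equation*}
with $Q(s):=\E\!\int_B\!\int_B \varphi_1(y)\varphi_1(z)\sigma(u_s(y))\sigma(u_s(z))f(y,z)\,dy\,dz$. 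Using Assumption \ref{lowerbound}, the positivity of $\varphi_1$, and the bound $|\sigma(x)|\ge l_\sigma|x|$, I aim to derive the key lower bound $Q(s)\ge l_\sigma^2 K_f v(s)$. Substituted back, this produces the renewal inequality $v(t)\ge c + c'\lambda^2\!\int_0^t E_\beta(-\mu_1(t-s)^\beta)^2 v(s)\,ds$. Since $E_\beta(-\mu_1 t^\beta)^2\sim C t^{-2\beta}$ and $2\beta\le 1$, the convolution kernel fails to be integrable on $[0,\infty)$, so a standard Laplace-transform/iteration argument forces $v(t)$ to grow exponentially for every $\lambda>0$. Because $v(t)\le |B|\sup_{x\in B}\E|u_t(x)|^2$ by Cauchy--Schwarz and the orthonormality of $\varphi_1$, the exponential growth is inherited by $\sup_{x\in B}\E|u_t(x)|^2$.

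The main obstacle will be the inequality $Q(s)\ge l_\sigma^2 K_f v(s)$. Although pathwise $\int_B\!\int_B \varphi_1(y)\varphi_1(z)\sigma(u_s(y))\sigma(u_s(z))f(y,z)\,dy\,dz\ge 0$ by positive-definiteness of $f$, the product $\sigma(u_s(y))\sigma(u_s(z))$ has no definite sign in general and $f-K_f$ is not a positive-definite kernel, so the naive splitting $f=K_f+(f-K_f)$ does not close the estimate. The argument must instead exploit the rank-one positive-definite structure of $\E[\sigma(u_s(y))\sigma(u_s(z))]$ on the span of $\varphi_1$ together with the sign-stability of $\langle u_s,\varphi_1\rangle$ to extract a $K_f l_\sigma^2 v(s)$ term dominating the remainder; this plays, in the colored-noise setting, the role of the elementary bound $\sigma^2(x)\ge l_\sigma^2 x^2$ used directly in the white-noise proof of Theorem \ref{Thrm-1}.
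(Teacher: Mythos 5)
Your overall strategy is the same as the paper's: project onto the principal eigenfunction, use the Walsh--Dalang isometry to get a renewal-type inequality with kernel $E_\beta(-\mu_1(t-s)^\beta)^2$, and exploit the fact that $\Lambda(\theta)=\int_0^\infty e^{-\theta t}E_\beta(-\mu_1 t^\beta)^2\,\d t\to\infty$ as $\theta\to 0$ when $2\beta\le 1$ to force the Laplace transform of $\E\langle u_t,\varphi_1\rangle^2$ to blow up, then pass to $\sup_{x\in B}\E|u_t(x)|^2$ by Cauchy--Schwarz; your handling of the no-exponential-decay claim (drop the nonnegative stochastic contribution and use the polynomial decay of the deterministic part) is also consistent with the paper. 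However, the proposal has a genuine gap at its central step: the lower bound $Q(s)\ge l_\sigma^2 K_f\,\E\langle u_s,\varphi_1\rangle^2$ is never established. You yourself flag it as the ``main obstacle,'' correctly observe that $\sigma(u_s(y))\sigma(u_s(z))$ has no definite sign and that $f-K_f$ need not be positive definite, and then only gesture at a hoped-for argument (``rank-one structure,'' ``sign-stability of $\langle u_s,\varphi_1\rangle$'') without carrying it out. Since every subsequent step — the renewal inequality, the non-integrability of the kernel, the divergence of the Laplace transform — rests on this bound, the exponential-growth half of the theorem is not proved in your write-up.

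For comparison, the paper closes exactly this step in \eqref{JNA-c11}--\eqref{JNA-c12} by working with absolute moments: it writes the second-moment term with $\E|\sigma(u_s(y))\sigma(u_s(z))|$, bounds it below by $l_\sigma^2\,\E|u_s(y)u_s(z)|$ via $|\sigma(x)|\ge l_\sigma|x|$, replaces $f(y,z)$ by $K_f$ using Assumption \ref{lowerbound}, and then uses the strict positivity of $\varphi_1$ together with $\int_{B\times B}\varphi_1(y)\varphi_1(z)\E|u_s(y)u_s(z)|\,\d y\,\d z\ \ge\ \E\langle u_s,\varphi_1\rangle^2$ to arrive at precisely the inequality you are missing, after which the argument proceeds as in Theorem \ref{Thrm-1}. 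The sign issue you raise is real — it is the question of whether the absolute value may be placed inside the expectation at the isometry stage — but the paper's route shows how the estimate is closed once one works with $\E|u_s(y)u_s(z)|$ rather than trying to control the signed correlation directly; your proposal, by contrast, stops at acknowledging the difficulty, so as written it does not constitute a proof.
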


\begin{theorem}\label{Thrm-6} In Theorem \ref{Thrm-5}, if $\beta\in (\frac{1}{2},\,1)$ and Assumption \ref{lowerbound} hold, then there exist a strictly positive real number $\lambda_u$ such that for all $\lambda>\lambda_u$,  $\sup_{x\in B} \E|u_t(x)|^2 $ grows exponentially fast as time gets large.
\end{theorem}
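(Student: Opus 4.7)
The plan is to mirror the strategy used for Theorem \ref{Thrm-2}, with the colored-noise covariance replacing the Dirac mass in the Walsh--Dalang isometry. Applying that isometry to the mild form \eqref{JNA-9-b} yields
\begin{align*}
\E|u_t(x)|^2 &= \bigl((\mathcal{G}_B^{(\beta)}u_0)_t(x)\bigr)^2 \\
&\quad + \lambda^2\!\!\int_0^t\!\!\int_B\!\!\int_B G_B^{(\beta)}(t-s,x,y_1)\,G_B^{(\beta)}(t-s,x,y_2)\,f(y_1,y_2)\,\E[\sigma(u_s(y_1))\sigma(u_s(y_2))]\,dy_1\,dy_2\,ds.
\end{align*}
Assumption \ref{lowerbound} gives $f\geq K_f$, and the lower bound $|\sigma(x)|\geq l_\sigma|x|$ combined with nonnegativity of $u$ (inherited from $u_0\geq 0$ by a comparison argument for \eqref{JNA-8-b}) gives $\E[\sigma(u_s(y_1))\sigma(u_s(y_2))]\geq l_\sigma^2\,\E[u_s(y_1)u_s(y_2)]$ pointwise.

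Next I would project onto the first Dirichlet eigenfunction $\varphi_1>0$ of $(-\Delta)^{\alpha/2}$ on $B$, with eigenvalue $\mu_1>0$. Set $g_t:=\int_B \varphi_1(x)u_t(x)\,dx$, so that $g_0=\int_B\varphi_1(y)u_0(y)\,dy>0$ by the support condition on $u_0$. Using the spectral identity $\int_B \varphi_1(x)G_B^{(\beta)}(t,x,y)\,dx = E_\beta(-\mu_1 t^\beta)\varphi_1(y)$, integrating \eqref{JNA-9-b} against $\varphi_1(x)$ produces the scalar mild equation
\begin{equation*}
g_t = E_\beta(-\mu_1 t^\beta)\,g_0 + \lambda\int_0^t\!\!\int_B E_\beta(-\mu_1(t-s)^\beta)\,\varphi_1(y)\,\sigma(u_s(y))\,F(ds,dy).
\end{equation*}
Taking second moments via Walsh--Dalang isometry for colored noise, using $f\geq K_f$, the sign inequality above, and $\varphi_1\geq 0$, the double spatial integral collapses to $K_f l_\sigma^2\,\E g_s^2$, yielding the renewal-type inequality
\begin{equation*}
\E g_t^2 \;\geq\; g_0^2\,E_\beta(-\mu_1 t^\beta)^2 \;+\; c_\lambda\int_0^t E_\beta(-\mu_1(t-s)^\beta)^2\,\E g_s^2\,ds,\qquad c_\lambda:=\lambda^2 l_\sigma^2 K_f.
\end{equation*}

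Then I would analyse this via the Laplace transform in $t$. Because $\beta>\tfrac12$, the polynomial decay $E_\beta(-\mu_1 t^\beta)\sim C\,t^{-\beta}$ recorded right after Theorem \ref{Thrm-2} shows that $t\mapsto E_\beta(-\mu_1 t^\beta)^2$ is integrable on $[0,\infty)$, so $\Psi(\xi):=\int_0^\infty e^{-\xi t}E_\beta(-\mu_1 t^\beta)^2\,dt$ is continuous, strictly decreasing in $\xi\geq 0$, with $\Psi(0)<\infty$ and $\Psi(\xi)\to 0$ as $\xi\to\infty$. Define $\lambda_u:=(l_\sigma^2 K_f\,\Psi(0))^{-1/2}$. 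For any $\lambda>\lambda_u$ one has $c_\lambda\Psi(0)>1$, so a unique $\xi_0>0$ with $c_\lambda\Psi(\xi_0)=1$ exists. Iterating the renewal inequality and taking Laplace transforms shows that for $\xi>\xi_0$ the Laplace transform of $\E g_\cdot^2$ dominates $g_0^2\,\Psi(\xi)/(1-c_\lambda\Psi(\xi))$, which diverges as $\xi\downarrow\xi_0$. The standard abscissa-of-convergence argument then forces $\E g_t^2$ to grow at least exponentially at rate $\xi_0$. Finally, Cauchy--Schwarz gives $\E g_t^2\leq\bigl(\int_B\varphi_1(x)\,dx\bigr)^2\sup_{x\in B}\E|u_t(x)|^2$, so the same exponential growth transfers to $\sup_{x\in B}\E|u_t(x)|^2$, proving the theorem with $\lambda_u$ as above.

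The main obstacle is the pointwise sign inequality $\E[\sigma(u_s(y_1))\sigma(u_s(y_2))]\geq l_\sigma^2\E[u_s(y_1)u_s(y_2)]$: because $|\sigma(x)|\geq l_\sigma|x|$ does not by itself fix the sign of $\sigma(u_s)$, the argument relies on a comparison principle for \eqref{JNA-8-b} guaranteeing $u_t\geq 0$ a.s.\ whenever $u_0\geq 0$, exactly as in the corresponding step of Foondun \cite{Foondun2}. Once nonnegativity is secured the remaining steps are a direct Laplace-transform computation; the only new ingredient in the colored-noise case is the factor $K_f$ that enters $c_\lambda$, which is precisely why the threshold $\lambda_u$ decreases with the lower bound on the spatial correlation.
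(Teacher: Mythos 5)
Your proposal is correct and follows essentially the same route as the paper's proof: project onto the first eigenfunction $\varphi_1$, apply the Walsh--Dalang isometry together with the lower bounds $f\geq K_f$ and $|\sigma(x)|\geq l_\sigma |x|$, and use the fact that $\Lambda(\theta)=\int_0^\infty e^{-\theta t}E_\beta(-\mu_1 t^\beta)^2\,\d t$ stays bounded when $\beta\in(\tfrac12,1)$ so that for $\lambda$ large the Laplace transform of $\E\langle u_t,\varphi_1\rangle^2$ must be infinite, which yields exponential growth and transfers to $\sup_{x\in B}\E|u_t(x)|^2$ by Cauchy--Schwarz. The only differences are cosmetic: you make explicit the positivity/comparison step behind $\E[\sigma(u_s(y))\sigma(u_s(z))]\geq l_\sigma^2\E[u_s(y)u_s(z)]$ and the explicit threshold $\lambda_u$ and rate $\xi_0$, which the paper leaves implicit in the inequality \eqref{JNA-c12}.
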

\begin{theorem}\label{Thrm-7}
In Theorem \ref{Thrm-5}, suppose that $\beta\in (\frac{1}{2},\,1)$, $d<\frac{\alpha}{2\beta}$, and $\int_{B \times B} f(x,\,y)\, \d x \,\d y <\infty$. If  $\{\varphi_n\}_{n\geq1}$ are uniformly bounded by a constant $C(B)$, then there exist a strictly positive real number $\lambda_l$ such that for $\lambda<\lambda_l$  the quantity  $\sup_{t>0}\sup_{x\in B} \E|u_t(x)|^2$ is finite.
\end{theorem}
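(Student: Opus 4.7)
My plan is to derive a linear Volterra-type integral inequality for $N(t):=\sup_{x\in B}\E|u_t(x)|^2$ and to close it for small $\lambda$ by a standard resolvent argument. Starting from the mild formulation \eqref{JNA-9-b}, applying the Walsh isometry adapted to the colored-noise covariance, using the linear growth bound $|\sigma(v)|\le L_\sigma|v|$, and applying Cauchy--Schwarz to $\E[\sigma(u_s(y))\sigma(u_s(z))]$, I would first obtain
\begin{equation*}
N(t)\;\le\; A \;+\; 2\lambda^2 L_\sigma^2\int_0^t K(t-s)\,N(s)\,\d s,
\end{equation*}
where $A$ is a finite constant depending only on $\|u_0\|_\infty$ and $B$, and
\begin{equation*}
K(r)\;:=\;\sup_{x\in B}\int_B\!\int_B G_B^{(\beta)}(r,x,y)\,G_B^{(\beta)}(r,x,z)\,f(y,z)\,\d y\,\d z.
\end{equation*}

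Next I would bound $K$ via the spectral expansion $G_B^{(\beta)}(r,x,y)=\sum_{n\ge 1}E_\beta(-\mu_n r^\beta)\varphi_n(x)\varphi_n(y)$: pulling absolute values inside, using the uniform bound $|\varphi_n|\le C(B)$ both for the $x$-factors and inside $\int\!\int\varphi_n(y)\varphi_m(z)f(y,z)\,\d y\,\d z$, and invoking the hypothesis $C_f:=\int_{B\times B}f(y,z)\,\d y\,\d z<\infty$, I arrive at
\begin{equation*}
K(r)\;\le\; C(B)^4\,C_f\,\Bigl(\sum_{n\ge 1}E_\beta(-\mu_n r^\beta)\Bigr)^{\!2}.
\end{equation*}
Combining the Mittag--Leffler bound $E_\beta(-s)\le c_\beta/(1+s)$ with Weyl's law $\mu_n\asymp n^{\alpha/d}$ for the Dirichlet fractional Laplacian, a level-set eigenvalue count gives $\sum_n E_\beta(-\mu_n r^\beta)\asymp r^{-\beta d/\alpha}$ as $r\downarrow 0$ and $\asymp r^{-\beta}$ as $r\to\infty$. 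After squaring, the singularity at $r=0$ is integrable precisely when $2\beta d/\alpha<1$, i.e.\ $d<\alpha/(2\beta)$, and the polynomial tail is integrable precisely when $2\beta>1$, i.e.\ $\beta>1/2$; both conditions are among our hypotheses, so $M:=\int_0^\infty K(r)\,\d r<\infty$.

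To finish, I would set $\lambda_l:=(2L_\sigma^2 M)^{-1/2}$ and, for $\lambda<\lambda_l$, iterate the Volterra inequality via Young's convolution inequality to obtain a convergent geometric series, yielding $\sup_{t>0}N(t)\le A/(1-2\lambda^2 L_\sigma^2 M)<\infty$. The step I expect to be the main obstacle is the small-$r$ analysis of $\sum_n E_\beta(-\mu_n r^\beta)$: the naive bound $E_\beta\le 1$ produces a divergent series, so the saving from $d<\alpha/(2\beta)$ only materialises after a careful eigenvalue count based on the Weyl asymptotics for $(-\Delta)^{\alpha/2}$ with Dirichlet conditions on a general bounded domain. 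The uniform eigenfunction bound $C(B)$ is precisely what permits taking absolute values pointwise across the spectral expansion despite the sign changes of $\varphi_n$, which is why it appears in the hypothesis alongside $d<\alpha/(2\beta)$ and the $L^1$-integrability of $f$.
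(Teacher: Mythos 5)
Your proposal is correct and, at its core, follows the same strategy as the paper's proof: mild formulation, the covariance formula for the colored noise, the Lipschitz bound plus Cauchy--Schwarz to reduce to $\sup_x\E|u_s(x)|^2$, the spectral expansion of $G_B^{(\beta)}$ combined with the uniform bound $C(B)$ and $\int_{B\times B}f<\infty$, and a smallness-of-$\lambda$ absorption. Where you genuinely differ is in how the uniform-in-$t$ finiteness of the kernel is obtained and how the inequality is closed. The paper integrates in time mode by mode (Lemma \ref{JNA-bd1}: $\int_0^t E_\beta(-\mu_n(t-s)^\beta)^2\,\d s\lesssim\mu_n^{-1/\beta}$), decouples the double spectral sum by Cauchy--Schwarz in the time variable, and then sums $\sum_n\mu_n^{-1/(2\beta)}\lesssim\sum_n n^{-\alpha/(2\beta d)}<\infty$ (this is exactly where $d<\alpha/(2\beta)$ enters), after which it pulls out $a_t=\sup_{0<s<t}\sup_{x\in B}\E|u_s(x)|^2$ and absorbs it for small $\lambda$. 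You instead bound the kernel pointwise in time, $K(r)\le C(B)^4C_f\bigl(\sum_n E_\beta(-\mu_n r^\beta)\bigr)^2$, and use the Mittag--Leffler bound \eqref{JNA-6} together with the Weyl-type estimate of Lemma \ref{Lema-1}(a) (so the eigenvalue count you flag as the main obstacle is precisely what that lemma supplies) to get $\sum_n E_\beta(-\mu_n r^\beta)\lesssim r^{-\beta d/\alpha}$ for $r\le 1$ and $\lesssim r^{-\beta}$ for $r\ge 1$ (the tail also uses $\sum_n\mu_n^{-1}<\infty$, i.e.\ $d<\alpha$, which follows from $d<\alpha/(2\beta)$ and $\beta>\tfrac12$); hence $M=\int_0^\infty K(r)\,\d r<\infty$ exactly under the stated hypotheses, and you close by a Volterra/renewal iteration with an explicit threshold $\lambda_l$. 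The two routes are equivalent in strength; yours makes the separate roles of the hypotheses transparent (small-$r$ singularity needs $d<\alpha/(2\beta)$, large-$r$ tail needs $\beta>\tfrac12$) and produces an explicit $\lambda_l$, while the paper's per-mode computation avoids any discussion of kernel asymptotics. One shared caveat: both your geometric-series step and the paper's absorption $a_t\lesssim 1+\tfrac12 a_t$ presuppose that the second moment is finite locally in time; this a priori bound comes from the existence theory and should be invoked explicitly when closing the fixed-point inequality.
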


\begin{corollary} \label{Thrm-8}
In Theorem \ref{Thrm-7}, if $d=1$ and $\alpha = 2$ then the conclusion of theorem follows.
\end{corollary}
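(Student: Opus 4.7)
The plan is to verify that each hypothesis of Theorem \ref{Thrm-7} is automatically met when $d=1$ and $\alpha=2$, so that the corollary reduces to a direct application of that theorem. The dimension constraint $d<\alpha/(2\beta)$ reads $1<1/\beta$, which is immediate from the standing assumption $\beta\in(1/2,1)$.

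Next, when $\alpha=2$ the operator $-(-\Delta)^{\alpha/2}$ is the ordinary Laplacian and the bounded set $B\subset\R$ is an interval (or, more generally, a finite union of intervals). On each component the Dirichlet eigenfunctions of $-\Delta$ are normalized sines, so the full eigenfunction system $\{\varphi_n\}_{n\geq 1}$ is uniformly bounded by a constant depending only on the shortest component length; this supplies the uniform bound $C(B)$ required by Theorem \ref{Thrm-7}. It remains to verify the integrability $\int_{B\times B}f(x,y)\,\d x\,\d y<\infty$: using the pointwise control $f(x,y)\le g(x-y)$ together with local integrability of $g$, a change of variables bounds this double integral by $|B|\int_{B-B}g(z)\,\d z$, which is finite because $B-B$ is compact and $g$ is locally integrable on $\R^d$.

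With all three hypotheses of Theorem \ref{Thrm-7} in force, the theorem directly delivers $\lambda_l>0$ such that for $\lambda<\lambda_l$ the quantity $\sup_{t>0}\sup_{x\in B}\E|u_t(x)|^2$ is finite, which is the desired conclusion. There is no genuine mathematical obstacle here; the argument is pure bookkeeping, and the only step worth flagging is the local integrability reduction for $f$, which relies only on the standing assumption that $g$ is locally integrable with possible singularity at the origin.
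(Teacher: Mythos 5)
Your argument is correct and matches the paper's proof: both reduce the corollary to checking the hypotheses of Theorem \ref{Thrm-7}, using $1<1/\beta$ (i.e.\ $d=1<\alpha/2\beta$) for the dimension condition and the explicit Dirichlet sine eigenfunctions, uniformly bounded by $(2/L)^{1/2}$, for the required bound $C(B)$ when $\alpha=2$, $d=1$. Your extra observation that $\int_{B\times B}f(x,y)\,\d x\,\d y<\infty$ is automatic from $f(x,y)\le g(x-y)$ and local integrability of $g$ goes slightly beyond the paper, which simply carries that integrability along as an inherited hypothesis of Theorem \ref{Thrm-7}; the observation is valid and only strengthens the statement.
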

\begin{corollary} \label{Thrm-9}
In Theorem \ref{Thrm-7}, if $d=1$ and $f$ is Riesz Kernel function, then the conclusion of theorem follows.
\end{corollary}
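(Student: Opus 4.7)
The plan is to deduce Corollary \ref{Thrm-9} by checking that the Riesz kernel in dimension $d=1$ satisfies all the hypotheses of Theorem \ref{Thrm-7}, and then invoking that theorem directly. The hypotheses to verify are: (i) the Dalang condition \eqref{Dalang}; (ii) the integrability condition $\int_{B\times B} f(x,y)\,\d x\,\d y < \infty$; and (iii) a uniform bound on the Dirichlet eigenfunctions $\{\varphi_n\}_{n\geq 1}$ of the fractional Laplacian on the interval $B\subset\R$. The parameter constraints $\beta\in(\tfrac{1}{2},1)$ and $d<\alpha/(2\beta)$ remain as standing hypotheses inherited from Theorem \ref{Thrm-7}; with $d=1$ this forces $\alpha > 2\beta > 1$, so in particular $\alpha>1$.

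First I would verify the integrability condition, which is the quickest step. Since $f$ is Riesz with $f(x,y)=|x-y|^{-\gamma}$ and $\gamma<d\wedge\alpha = 1\wedge\alpha = 1$ (using $\alpha>1$), and $B\subset\R$ is bounded, choose $M>0$ with $B\subset[-M,M]$. A one-variable change of variables $u=x-y$ gives
\begin{equation*}
\int_B\int_B |x-y|^{-\gamma}\,\d x\,\d y \;\leq\; \int_{-M}^{M}\!\int_{-2M}^{2M} |u|^{-\gamma}\,\d u\,\d y \;=\; 2M\cdot \frac{2(2M)^{1-\gamma}}{1-\gamma} \;<\;\infty,
\end{equation*}
since $\gamma<1$ guarantees local integrability of $|u|^{-\gamma}$ in one dimension.

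Next I would note that the Dalang condition \eqref{Dalang} is already listed as satisfied by the Riesz kernel in the example given earlier in the excerpt, so no further work is needed there. For the uniform eigenfunction bound, in the classical case $\alpha=2$ the Dirichlet eigenfunctions on an interval $[0,L]$ are $\varphi_n(x)=\sqrt{2/L}\,\sin(n\pi x/L)$, which are uniformly bounded by $\sqrt{2/L}$; for $\alpha\in(1,2)$ the analogous uniform $L^\infty$ bound on the Dirichlet eigenfunctions of $(-\Delta)^{\alpha/2}$ on an interval is available in the literature on one-dimensional fractional Laplacians. Thus the uniform bound $\|\varphi_n\|_\infty\leq C(B)$ holds throughout the admissible range of $\alpha$.

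With all three hypotheses verified, Theorem \ref{Thrm-7} applies and yields the existence of $\lambda_l>0$ such that $\sup_{t>0}\sup_{x\in B}\E|u_t(x)|^2<\infty$ for $\lambda<\lambda_l$. The main subtlety is really just item (iii): in higher-dimensional analogues or for general $\alpha$, uniform $L^\infty$ control of Dirichlet eigenfunctions can fail or require delicate estimates, but in the one-dimensional setting of this corollary it is available, which is precisely why the result specializes cleanly from Theorem \ref{Thrm-7}.
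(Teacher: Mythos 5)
Your proposal is correct and follows essentially the same route as the paper: the entire content of the corollary is the verification that the Riesz kernel with $\gamma<d\wedge\alpha=1$ is integrable over $B\times B$ in dimension one (the paper simply computes $\int_{B\times B}|x-y|^{-\gamma}\,\d x\,\d y = 2L^{2-\gamma}/((1-\gamma)(2-\gamma))<\infty$ for an interval of length $L$, which matches your change-of-variables bound), after which Theorem \ref{Thrm-7} applies. Your additional remarks on the Dalang condition and the uniform eigenfunction bound are harmless but not needed, since those hypotheses are simply carried over from Theorem \ref{Thrm-7} rather than re-proved.
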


\begin{theorem}\label{Thrm-10}  Assume that $\{\varphi_n\}_{n\geq1}$ are uniformly bounded by a constant $C(B)$, and ~\\$\int_{B \times B} f(x,\,y)\, \d x \,\d y <\infty$. For $d<\frac{1}{2} \min\{\frac{1}{\beta},\,\frac{1}{\gamma}\}\alpha$, let $u_t^{(\beta)}$ and $u_t^{(\gamma)}$ denote solutions to \eqref{JNA-9} for parameters $\beta, \, \gamma \in \left (\frac{1}{2},\,1 \right)$ with $\gamma \rightarrow \beta$. The initial condition $u_0$ is the same for both equations. Then, for any $p\geq 2$, we have
\begin{equation*}
\lim_{\gamma \rightarrow \beta}\sup_{\text{x} \in B}\E|u_t^{(\gamma)}(\text{x})- u_t^{(\beta)}(\text{x})|^p=0.
\end{equation*}
\end{theorem}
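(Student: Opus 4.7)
The plan is to mimic the proof of Theorem~\ref{Thrm-4}, upgrading Walsh's $L^2$ isometry to the Burkholder--Davis--Gundy (BDG) inequality for stochastic integrals against the colored noise $F$. Fix $p\ge 2$ and split the difference as
\begin{equation*}
u_t^{(\gamma)}(\mathrm{x})-u_t^{(\beta)}(\mathrm{x})=J_1(t,\mathrm{x})+J_2(t,\mathrm{x})+J_3(t,\mathrm{x}),
\end{equation*}
where $J_1$ is the deterministic contribution $(\mathcal{G}_B^{(\gamma)}u_0)_t-(\mathcal{G}_B^{(\beta)}u_0)_t$, $J_2$ is the stochastic integral of $[G_B^{(\gamma)}(t-s,\mathrm{x},\mathrm{y})-G_B^{(\beta)}(t-s,\mathrm{x},\mathrm{y})]\sigma(u_s^{(\gamma)}(\mathrm{y}))$ against $F$, and $J_3$ is the stochastic integral of $G_B^{(\beta)}(t-s,\mathrm{x},\mathrm{y})[\sigma(u_s^{(\gamma)}(\mathrm{y}))-\sigma(u_s^{(\beta)}(\mathrm{y}))]$ against $F$. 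Throughout I will use the spectral representation $G_B^{(\beta)}(t,\mathrm{x},\mathrm{y})=\sum_{n\ge 1}E_\beta(-\mu_n t^\beta)\varphi_n(\mathrm{x})\varphi_n(\mathrm{y})$, the uniform bound $|\varphi_n|\le C(B)$, the Weyl asymptotics $\mu_n\sim n^{\alpha/d}$, and the Mittag-Leffler decay $E_\beta(-\mu_n t^\beta)\le c/(1+\mu_n t^\beta)$. Under the dimensional restriction $d<\tfrac12\min\{1/\beta,1/\gamma\}\alpha$, these ingredients combine to make the relevant series absolutely convergent and time-integrable.

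For $\|J_1(t,\mathrm{x})\|_p$ the continuity of $\beta\mapsto E_\beta(-\mu_n t^\beta)$ (uniform in $\beta$ on compact subsets of $(1/2,1)$) plus dominated convergence on the eigenfunction expansion yield convergence to $0$, uniformly in $\mathrm{x}\in B$. For $J_2$ and $J_3$ I apply the BDG-type bound for colored noise together with Minkowski:
\begin{equation*}
\Bigl\|\int_0^t\!\!\int_B H(s,\mathrm{y})F(ds,d\mathrm{y})\Bigr\|_p^2\le c_p\!\int_0^t\!\!\int_{B\times B}\! f(\mathrm{y},\mathrm{y}')\,\|H(s,\mathrm{y})\|_p\,\|H(s,\mathrm{y}')\|_p\,d\mathrm{y}\,d\mathrm{y}'\,ds.
\end{equation*}
For $J_2$ I insert $|\sigma(u_s^{(\gamma)}(\mathrm{y}))|\le L_\sigma|u_s^{(\gamma)}(\mathrm{y})|$ and invoke the uniform $p$-th moment bound $\sup_{s\le t,\mathrm{y}\in B}\|u_s^{(\gamma)}(\mathrm{y})\|_p\le C(t)$ that is available (uniformly for $\gamma$ near $\beta$) from the moment analysis underlying Theorem~\ref{Thrm-7}. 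The question then reduces to showing that
\begin{equation*}
\int_0^t\!\!\int_{B\times B}\!f(\mathrm{y},\mathrm{y}')\Delta_{\gamma,\beta}(t-s,\mathrm{x},\mathrm{y})\,\Delta_{\gamma,\beta}(t-s,\mathrm{x},\mathrm{y}')\,d\mathrm{y}\,d\mathrm{y}'\,ds\longrightarrow 0\quad(\gamma\to\beta),
\end{equation*}
where $\Delta_{\gamma,\beta}(r,\mathrm{x},\mathrm{y}):=G_B^{(\gamma)}(r,\mathrm{x},\mathrm{y})-G_B^{(\beta)}(r,\mathrm{x},\mathrm{y})$. Since $\int_{B\times B}f(\mathrm{y},\mathrm{y}')\,d\mathrm{y}\,d\mathrm{y}'<\infty$ and $|\varphi_n|\le C(B)$, the integrand is controlled by a time-integrable, parameter-independent envelope built from $\sum_n (1+\mu_n r^\beta)^{-1}$; dominated convergence then finishes this piece.

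For $J_3$ the Lipschitz property of $\sigma$ and the same BDG estimate give, with $m(t):=\sup_{\mathrm{x}\in B}\|u_t^{(\gamma)}(\mathrm{x})-u_t^{(\beta)}(\mathrm{x})\|_p$,
\begin{equation*}
\|J_3(t,\mathrm{x})\|_p^2\le c\lambda^2 L_\sigma^2\int_0^t K(t-s)\,m(s)^2\,ds,\qquad K(r):=\sup_{\mathrm{x}\in B}\!\int_{B\times B}\!G_B^{(\beta)}(r,\mathrm{x},\mathrm{y})G_B^{(\beta)}(r,\mathrm{x},\mathrm{y}')f(\mathrm{y},\mathrm{y}')\,d\mathrm{y}\,d\mathrm{y}'.
\end{equation*}
Combining the three bounds and taking the supremum in $\mathrm{x}$ yields $m(t)^2\le A(\gamma,t)+c\int_0^t K(t-s)m(s)^2\,ds$ with $A(\gamma,t)\to 0$ as $\gamma\to\beta$. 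Under the standing hypotheses $K$ is locally integrable on $[0,t]$, so a fractional Gronwall lemma (as in the analogous white-noise step of Theorem~\ref{Thrm-4}) delivers $m(t)\to 0$.

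The main obstacle I anticipate is the dominated-convergence argument for $J_2$: one must verify that the double sum
$\sum_{n,m}|E_\gamma(-\mu_n r^\gamma)E_\gamma(-\mu_m r^\gamma)|\,|\varphi_n(\mathrm{x})\varphi_m(\mathrm{x})|$, integrated against $f$ and against $s\in[0,t]$, admits a uniform-in-$\gamma$ integrable majorant. This is precisely where the condition $d<\tfrac12\min\{1/\beta,1/\gamma\}\alpha$ is needed, since it ensures $\sum_n E_\beta(-\mu_n r^\beta)$ is square-integrable in $r$, and where the uniform eigenfunction bound $|\varphi_n|\le C(B)$ is indispensable for separating the double sum from the kernel $f$. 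Everything else is a colored-noise transcription of the argument underlying Theorem~\ref{Thrm-4}.
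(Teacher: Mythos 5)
Your proposal is essentially correct and follows the same skeleton as the paper's proof: the same three-term decomposition of $u_t^{(\gamma)}-u_t^{(\beta)}$, the same BDG/Minkowski bound for the white-in-time, colored-in-space integral, the uniform eigenfunction bound $|\varphi_n|\le C(B)$ together with $\int_{B\times B}f<\infty$ to decouple the kernel from the double eigenfunction sum, continuity of $\beta\mapsto E_\beta(-\mu_n t^\beta)$ (the paper's Lemma \ref{JNA-27}) for the deterministic term, and dominated convergence for the kernel-difference term with a $\gamma$-uniform envelope (the paper's Proposition \ref{nane-tuan-2022-b}, built on Lemma \ref{nane-tuan} and Lemma \ref{Integral-bound}). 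The genuine difference is how you close the self-referential estimate: you set up a Volterra inequality $m(t)^2\le A(\gamma,t)+c\int_0^t K(t-s)m(s)^2\,\d s$ with the weakly singular but locally integrable kernel $K(r)\lesssim r^{-2\beta\lambda}\int_{B\times B}f$ (via Lemma \ref{nane-tuan}(a)) and invoke a singular Gronwall/resolvent argument, whereas the paper (both in Theorem \ref{Thrm-4} and here, see \eqref{JNA-c20-a}--\eqref{JNA-c21}) works with the exponentially weighted supremum $b_t(\theta)=\sup_{s\le t}\sup_{z}e^{-\theta s}\E|u_s^{(\gamma)}(z)-u_s^{(\beta)}(z)|^p$ and absorbs the $J_3$ contribution into $\tfrac12 b_t(\theta)$ by taking $\theta$ large — no Gronwall lemma at all, and the weighting simultaneously yields constants independent of $\beta,\gamma$. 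Your route works, but it asks for two extra verifications you only gesture at: (i) the Gronwall step needs $A(\gamma,s)$ to converge to zero for each $s\le t$ with a $\gamma$-uniform integrable majorant (for $J_1$ use the crude bound $|(\mathcal{G}_B^{(\gamma)}u_0)_s|\le\|u_0\|_\infty$, not the $\max(1,1/s)$ estimate from Lemma \ref{JNA-27}, which is not $p$-integrable at $s=0$); and (ii) the uniform-in-$\gamma$ $p$-th moment bound for $u^{(\gamma)}$ is not "available from Theorem \ref{Thrm-7}", which concerns second moments and small $\lambda$ only — it must be proved separately for all $\lambda$ and $p\ge2$ with constants independent of the fractional parameter, which is exactly the content of the paper's Proposition \ref{JNA-c42}. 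With those two points supplied, your argument is a valid alternative to the paper's weighted-norm absorption.
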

We now briefly give an outline of the paper. In this paper we employ similar methods as in Foondun \cite{Foondun2} with crucial changes to prove our main results. This method was also used in \cite{Foondun-Davar} and \cite{Foondun1} among others. Section \ref{sect-prelim} contains estimates and some preliminary results needed for the proof of main results. Section \ref{proof1} is devoted to the proof of Theorem \ref{Thrm-1}, Theorem \ref{Thrm-2}, and Theorem \ref{Thrm-3}  while the proofs of Theorem \ref{Thrm-4} is given in Section \ref{proof4}. Further, in Section \ref{proof3} we give the proof of Theorem \ref{Thrm-5}, Theorem \ref{Thrm-6}, Theorem \ref{Thrm-7}, and Theorem \ref{Thrm-10}.

\section{Preliminaries} \label{sect-prelim}
In this section we give some preliminary results, which are needed for the proofs of the main theorems. \\
Let $X_t$ denote a symmetric stable process of index $\alpha \in (0,2)$ in $\R^d$ and $B$ be a regular bounded open subset of $\R^d$. Let $X_t^B$ denote the symmetric stable process killed upon exiting $B$. The following Hilbert-Schmidt expansion (see Davies \cite{Davies} ) holds for the probability density function $p_B$ of $X_t^B$,
\begin{equation}\label{JNA-1}
p_B(t,\,\text{x},\,\text{y}):=\sum_{n=1}^\infty e^{-\mu_{n} t}\varphi_{n}(\text{x})\varphi_{n}(\text{y}),
\end{equation}
for all $\text{x}, \, \text{y} \in B,\, t>0,$ where $\{\varphi_{n}\}_{n\geq 0}$ is an orthonormal basis of $L^2(B)$, and $0<\mu_1<\mu_2\leq\mu_3\leq \cdots$ is a sequence of positive real numbers such that,  for every $n\geq1$, $P_{t}^B\varphi_{n} = e^{-\mu_{n} t}\varphi_{n}$.\\
From Theorem 2.3 of Blumenthal and Getoor \cite{Blumental} and the proof of Theorem 5.1 of Chen, Meerschaert, and Nane \cite{Chen}, we infer the following lemma:
\begin{lemma} \cite{Blumental, Chen} \label{Lema-1}
For any bounded open subset $B$ of $\R^d$, the system of eigenfunctions $\{\varphi_n\}_{n\geq1}$ and the corresponding eigenvalues $\{\mu_n\}_{n\geq1}$ satisfy the following:
\begin{enumerate}
\item $C_{1}n^{\frac{\alpha}{d}} \leq \mu_n \leq C_{2} n^{\frac{\alpha}{d}}$ for every $n \in \N$,
\item $|\varphi_n (\mathrm{x})|\leq C\mu_n^{\frac{d}{2\alpha}}$ for every $\mathrm{x} \in B$,
\end{enumerate}
where $C$, $C_1$, and $C_2$ are positive real numbers.
\end{lemma}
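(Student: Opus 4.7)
The plan is to derive both statements from a single probabilistic input: the free symmetric $\alpha$-stable transition density satisfies $p(t,x,x)\le c\,t^{-d/\alpha}$, a consequence of scaling and the characteristic exponent $|\xi|^{\alpha}$. Domination from the killing gives the same on-diagonal bound $p_B(t,x,x)\le c\,t^{-d/\alpha}$ for the killed density, and combined with the spectral expansion \eqref{JNA-1} everything else is functional-analytic.

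For part (1), I would integrate the diagonal of \eqref{JNA-1} over $B$ to obtain the heat-trace bound
\begin{equation*}
\sum_{n=1}^{\infty}e^{-\mu_n t}=\int_B p_B(t,x,x)\,dx\le c\,|B|\,t^{-d/\alpha},\qquad t>0,
\end{equation*}
and then deduce an upper bound on the counting function $N(\lambda)=\#\{n:\mu_n\le\lambda\}$ by the one-line estimate
\begin{equation*}
e^{-1}N(\lambda)\le\sum_{\mu_n\le\lambda}e^{-\mu_n/\lambda}\le\sum_{n\ge1}e^{-\mu_n/\lambda}\le c\,\lambda^{d/\alpha},
\end{equation*}
which rearranges to $\mu_n\ge C_1 n^{\alpha/d}$. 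For the matching upper bound $\mu_n\le C_2 n^{\alpha/d}$, I would use the min--max principle with an explicit $n$-dimensional test space: pack $n$ disjoint balls of radius $r\sim n^{-1/d}$ inside $B$, extend by zero the first Dirichlet eigenfunction on each sub-ball, and note that (i) these test functions are $L^2$-orthogonal, (ii) by $\alpha$-self-similar scaling the first eigenvalue of each sub-ball is $\le C n^{\alpha/d}$, and (iii) the non-local cross terms in the Dirichlet form are non-positive when the supports are disjoint, so the Rayleigh quotient on the span is bounded by the worst individual one.

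For part (2), I would exploit semigroup smoothing. From $P_t^B\varphi_n=e^{-\mu_n t}\varphi_n$, Cauchy--Schwarz and $\|\varphi_n\|_{L^2(B)}=1$ yield
\begin{equation*}
|\varphi_n(x)|\le e^{\mu_n t}\Big(\int_B p_B(t,x,y)^2\,dy\Big)^{1/2}=e^{\mu_n t}\,p_B(2t,x,x)^{1/2}\le c\,e^{\mu_n t}\,t^{-d/(2\alpha)},
\end{equation*}
where the middle equality is Chapman--Kolmogorov. Choosing $t=1/\mu_n$ optimally gives $|\varphi_n(x)|\le C\mu_n^{d/(2\alpha)}$.

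The essential point, and the only real obstacle, is the free on-diagonal estimate $p(t,x,x)\le c\,t^{-d/\alpha}$; once in hand, parts (1) and (2) are clean applications of the min--max principle, an elementary counting-function estimate, and Cauchy--Schwarz on the semigroup. I do not anticipate technical surprises beyond checking that the standard min--max comparison still functions in the non-local setting, which it does because disjoint-support cross terms in the fractional Dirichlet form have the right sign.
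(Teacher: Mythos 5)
The paper does not prove this lemma at all: it is quoted from Blumenthal--Getoor (the Weyl-type eigenvalue asymptotics, your part (1)) and from the proof of Theorem 5.1 of Chen--Meerschaert--Nane (the eigenfunction bound, your part (2)), so any self-contained argument is automatically a different route. Your part (2) is correct and is essentially the cited argument: $\varphi_n=e^{\mu_n t}P_t^B\varphi_n$, Cauchy--Schwarz, Chapman--Kolmogorov, the domination $p_B\le p$ with $p(t,x,x)\le ct^{-d/\alpha}$ from scaling, and the choice $t=1/\mu_n$. The lower bound $\mu_n\ge C_1 n^{\alpha/d}$ via the heat trace and the counting-function estimate is also correct (termwise integration of the diagonal expansion \eqref{JNA-1} is justified by monotone convergence, and $n\le N(\mu_n)$ closes it).

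The gap is in the upper bound $\mu_n\le C_2 n^{\alpha/d}$, precisely at the step you flag as unproblematic. Courant--Fischer requires you to bound the \emph{maximum} of the Rayleigh quotient over the whole $n$-dimensional test space, i.e.\ over all sign patterns of the coefficients $a_i$ in $u=\sum_i a_i\psi_i$. Disjointness of supports and $\psi_i\ge 0$ do give $\mathcal{E}(\psi_i,\psi_j)=-c_{d,\alpha}\int\!\!\int \psi_i(x)\psi_j(y)|x-y|^{-d-\alpha}\,\d x\,\d y\le 0$ for $i\ne j$, but a symmetric matrix with non-positive off-diagonal entries can have largest eigenvalue strictly bigger than its largest diagonal entry (take $a_i a_j<0$; already the $2\times 2$ matrix with diagonal $\lambda$ and off-diagonal $-\epsilon$ has top eigenvalue $\lambda+\epsilon$). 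So ``the Rayleigh quotient on the span is bounded by the worst individual one'' does not follow from the sign alone; the sign argument only controls the minimizing, not the maximizing, coefficient vector. Moreover the cross terms are not lower order here: for two neighbouring balls of radius $r\sim n^{-1/d}$ one has $|\mathcal{E}(\psi_i,\psi_j)|\sim r^{-\alpha}\sim n^{\alpha/d}$, the same order as the diagonal. The construction can be repaired quantitatively: place the balls on a grid with separation comparable to $r$, use $|\mathcal{E}(\psi_i,\psi_j)|\le c\,\mathrm{dist}(B_i,B_j)^{-d-\alpha}\|\psi_i\|_{L^1}\|\psi_j\|_{L^1}$ and a Gershgorin/Schur bound to show $\sum_{j\ne i}|\mathcal{E}(\psi_i,\psi_j)|\le Cn^{\alpha/d}$ (the lattice sum converges since $\alpha>0$), which gives $\mu_n\le C_2n^{\alpha/d}$ with a worse constant; alternatively, avoid min--max entirely by invoking the Weyl law of Blumenthal--Getoor as the paper does, or the comparison $\mu_n\le \lambda_n^{\alpha/2}$ with the Dirichlet Laplacian eigenvalues. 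As written, though, step (iii) of your part (1) is a genuine gap.
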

\noindent It is also known that
\begin{align*}
u_t(\mathrm{x})&:=\E^x[u_0(X_t^B)]\\
 &=\int_{B} p_B(t,\,\text{x},\,\text{y})\,u_0(y)\,\d y,
\end{align*}
solves the heat equation $\partial_t u_t(\mathrm{x})=-(-\Delta)^{\alpha/2}u_t(\text{x})$ defined on $B$ with Dirichlet boundary condition and initial condition $u_0$.

Let $D = \{D_r, r\geq 0\}$ denote a $\beta$-stable subordinator and $E_t$ to be the inverse of a stable subordinator of index $\beta\in (0,\,1)$. The process $X_{E_t}^B$ is just a time-changed of the killed $\alpha$-symmetric stable process $X^B_t$ and since $\beta\in (0,\,1)$, $X_{E_t}^B$ moves more slowly than $X_t^B.$ It is known that the density of the time changed process $X_{E_t}^B$ is given by the $G^{(\beta)}(t,x)$. By conditioning, we have
\begin{align}\label{JNA-2}
G^{(\beta)}(t, x)= \int_0^{\infty}p_B(s,x)f_t(s) ds,
\end{align}
where
\begin{align}\label{JNA-3}
f_t(s)=t\beta^{-1}s^{-1-1/\beta}g_\beta(ts^{-1/\beta}),
\end{align}
where $g_\beta(.)$ (Cf. Meerschaert and Straka \cite{Meerschaert5}) is the density function of $D_1$ and is infinitely differentiable on the entire real line, with $g_\beta(u)=0$ for $u \leq 0$; see Meerschaert and Scheffler \cite{Meerschaert3} for more information about the inverse stable subordinator $E_t$. \\
The function $v_t(\text{x}):=\E^x[u_0(X_{E_t}^B)]$ solves the space-time fractional equation
\begin{equation} \label{JNA-3a}
\left \{\begin{aligned}
&\partial^\beta_t v_t(\text{x})= -(-\Delta)^{\alpha/2} v_t(\text{x})\;\; \text{for}\;\; \text{x} \in B\;\;\text{and}\; \;t>0\\
&v_t(\text{x})=0 \quad \text{for}\;\; \text{x} \notin B,
\end{aligned}\right.
\end{equation}
with initial condition $u_0$ (Chen, Meerschaert, and Nane \cite{Chen}).  Thus, we get the following representation of $v_t (x)$
\begin{align} \label{JNA-3b}
v_t(\text{x})&:=\E^\text{x}[u_0(X_{E_t}^B)] \nonumber\\
&=\int_{B}\int_0^\infty \sum_{n=1}^\infty e^{-\mu_{n} s}\varphi_{n}(\text{x})\varphi_{n}(\text{y}) f_t(s)\,u_0(\text{y})\,\d s\d \text{y} \nonumber\\
&=\int_{B}\sum_{n=1}^\infty E_\beta(-\mu_{n} t^\beta)\varphi_{n}(\text{x})\varphi_{n}(\text{y})u_0(\text{y})\d \text{y} \nonumber\\
&=\int_{B}G_B^{(\beta)}(t,\,\text{x},\,\text{y})u_0(\text{y})\,\d \text{y},
\end{align}
this follows since the Laplace transform of $f_t(s)$ is $\hat{f}_t(\lambda)=E_\beta(-\lambda t^\beta)$ where $E_\beta(x)=\sum_{k=0}^\infty \frac{x^k}{\Gamma(1+\beta k)}$ is the
Mittag-Leffler function and have the following property,
\begin{equation}\label{JNA-6}
\frac{1}{1 + \Gamma(1-\beta)x}\leq E_{\beta}(-x)\leq \frac{1}{1+\Gamma(1+\beta)^{-1}x} \ \  \ \text{for}\ x>0,
\end{equation}
where $\Gamma(\cdot)$ is the gamma function. Thus, we get the following expansion,
\begin{equation}\label{JNA-4}
G_B^{(\beta)}(t,\,\text{x},\,\text{y}):= \sum_{n=1}^\infty E_\beta(-\mu_{n} t^\beta)\varphi_{n}(\text{x})\varphi_{n}(\text{y}).
\end{equation}
Using \eqref{JNA-3} and a change of variable from \eqref{JNA-4} we obtain
\begin{align}\label{JNA-5}
G_B^{(\beta)}(t,\,\text{x},\,\text{y})&=\int_0^\infty \sum_{n=1}^\infty e^{-\mu_{n} (t/u)^\beta}\varphi_{n}(\text{x})\varphi_{n}(\text{y}) g_\beta(u)\d u\\
&=\int_0^\infty p_B(\left(\frac{t}{u}\right)^\beta,\,\text{x},\,\text{y})g_\beta(u)\,\d u.
\end{align}
The following Lemma from Mijena and Nane \cite{Nane1} about the $L^2$-norm of the heat kernel which is a crucial ingredient for the existence and uniqueness of solution of equation \eqref{JNA-8} is also used in proving Theorem \ref{Thrm-4}.
\begin{lemma} \label{Lemma-2}
(Lemma 1 in \cite{Nane1}) Suppose that $d<2 \alpha$, then
\begin{equation}
    \int_{\R^d} [G_t^{(\beta)} (x)]^2 dx = C^*t^{\frac{-\beta d}{\alpha}},
\end{equation}
where the constant $C^*$ is given by
\begin{equation*}
    C^* = \frac{(\nu)^{-\frac{d}{\alpha}}2\pi^{\frac{d}{2}}}{\alpha \Gamma (\frac{d}{2})} \frac{1}{(2\pi)^d} \int_0^\infty z^{\frac{d}{\alpha-1}} (\E_\beta(-z))^2 dz.
\end{equation*}
\end{lemma}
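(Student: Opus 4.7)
\textbf{Proof proposal for Lemma \ref{Lemma-2}.}

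The plan is to compute the $L^{2}$-norm on the Fourier side via Plancherel, and then reduce the multivariate integral to a one-dimensional integral involving the Mittag-Leffler function by switching to polar coordinates and rescaling. The starting observation is that, on the whole space $\R^{d}$, the density $G_{t}^{(\beta)}$ of the time-changed $\alpha$-stable process $X_{E_{t}}$ satisfies the space-time fractional equation $\partial_{t}^{\beta}G_{t}^{(\beta)}=-\nu(-\Delta)^{\alpha/2}G_{t}^{(\beta)}$ with $G_{0}^{(\beta)}=\delta_{0}$; taking Fourier transforms and using the standard identity for the Caputo-ML pair, one has $\widehat{G_{t}^{(\beta)}}(\xi)=E_{\beta}(-\nu|\xi|^{\alpha}t^{\beta})$. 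This is the essential ingredient coming from the subordination representation \eqref{JNA-3} on $\R^{d}$, together with the Fourier symbol $e^{-\nu s|\xi|^{\alpha}}$ of the $\alpha$-stable semigroup.

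First, Plancherel's theorem gives
\begin{equation*}
\int_{\R^{d}}[G_{t}^{(\beta)}(x)]^{2}\,dx
=\frac{1}{(2\pi)^{d}}\int_{\R^{d}}\bigl[E_{\beta}(-\nu|\xi|^{\alpha}t^{\beta})\bigr]^{2}\,d\xi .
\end{equation*}
Since the integrand is radial, I would pass to polar coordinates, which contributes the surface area $2\pi^{d/2}/\Gamma(d/2)$ of the unit sphere in $\R^{d}$, giving
\begin{equation*}
\int_{\R^{d}}[G_{t}^{(\beta)}(x)]^{2}\,dx
=\frac{1}{(2\pi)^{d}}\cdot\frac{2\pi^{d/2}}{\Gamma(d/2)}\int_{0}^{\infty}\bigl[E_{\beta}(-\nu r^{\alpha}t^{\beta})\bigr]^{2}\,r^{d-1}\,dr .
\end{equation*}

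Next I would make the change of variable $z=\nu r^{\alpha}t^{\beta}$, so that $r=(z/(\nu t^{\beta}))^{1/\alpha}$ and $r^{d-1}\,dr=\alpha^{-1}(\nu t^{\beta})^{-d/\alpha}z^{d/\alpha-1}\,dz$. This pulls out the time factor and the constant $\nu$ cleanly:
\begin{equation*}
\int_{0}^{\infty}\bigl[E_{\beta}(-\nu r^{\alpha}t^{\beta})\bigr]^{2}\,r^{d-1}\,dr
=\frac{(\nu)^{-d/\alpha}}{\alpha}\,t^{-\beta d/\alpha}\int_{0}^{\infty}z^{d/\alpha-1}\bigl[E_{\beta}(-z)\bigr]^{2}\,dz ,
\end{equation*}
and combining constants yields the claimed identity with $C^{\ast}$ as stated.

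The main obstacle, and the only place where the hypothesis $d<2\alpha$ enters, is to check that the final integral $\int_{0}^{\infty}z^{d/\alpha-1}[E_{\beta}(-z)]^{2}\,dz$ is finite. Near $z=0$, one has $E_{\beta}(-z)\to 1$, so the integrand behaves like $z^{d/\alpha-1}$, which is integrable since $d/\alpha>0$. Near $z=\infty$, the asymptotic $E_{\beta}(-z)\sim(\Gamma(1-\beta)z)^{-1}$ (equivalently, the upper bound in \eqref{JNA-6}) gives an integrand behaving like $z^{d/\alpha-3}$, which is integrable precisely when $d/\alpha-3<-1$, i.e.\ $d<2\alpha$. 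This is exactly the standing assumption, so the constant $C^{\ast}$ is finite and the formula is proved.
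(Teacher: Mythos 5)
Your proposal is correct and is essentially the standard argument for this lemma, which the paper does not reprove but imports from Lemma 1 of Mijena and Nane \cite{Nane1}: Fourier transform $\widehat{G_t^{(\beta)}}(\xi)=E_\beta(-\nu|\xi|^\alpha t^\beta)$, Plancherel, polar coordinates, and the substitution $z=\nu r^\alpha t^\beta$, with $d<2\alpha$ entering only through the tail bound $E_\beta(-z)\lesssim z^{-1}$ to make $\int_0^\infty z^{d/\alpha-1}[E_\beta(-z)]^2\,dz$ finite. Note only that the exponent in the displayed constant should read $z^{\frac{d}{\alpha}-1}$ (as in your computation), the statement's $z^{\frac{d}{\alpha-1}}$ being a typo.
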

The following three results are helpful in proving the finiteness result in Theorem \ref{Thrm-3}, and Theorem \ref{Thrm-7}.
\begin{lemma} \label{JNA-bd1}
For $p \geq 2$ and $\beta \in \left (\frac{1}{p}, \,1\right )$, we have
\begin{equation}
  \int_0^t E_\beta(-\mu_{n}(t-s)^\beta)^p \d s \lesssim \mu_n^{-\frac{1}{\beta}},
\end{equation}
for every $t>0$.
\end{lemma}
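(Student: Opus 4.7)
The plan is to reduce the integral to a rescaled one by invoking the upper estimate for the Mittag-Leffler function. Start with the right-hand inequality in \eqref{JNA-6}:
\begin{equation*}
E_\beta(-\mu_n (t-s)^\beta) \le \frac{1}{1 + c\,\mu_n (t-s)^\beta}, \qquad c := \Gamma(1+\beta)^{-1}.
\end{equation*}
Raising to the $p$-th power and integrating,
\begin{equation*}
\int_0^t E_\beta(-\mu_n (t-s)^\beta)^p\,\d s \;\le\; \int_0^t \frac{\d s}{\bigl(1 + c\,\mu_n (t-s)^\beta\bigr)^p}.
\end{equation*}
The change of variable $u = t-s$ turns this into $\int_0^t (1 + c\,\mu_n u^\beta)^{-p}\,\d u$, so the dependence on $t$ is removed after we extend the upper limit to $+\infty$.

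Next I would rescale by setting $v = (c\mu_n)^{1/\beta} u$, so that $\d u = (c\mu_n)^{-1/\beta}\,\d v$ and $c\mu_n u^\beta = v^\beta$. This yields
\begin{equation*}
\int_0^t \frac{\d u}{(1 + c\mu_n u^\beta)^p} \;=\; (c\mu_n)^{-1/\beta} \int_0^{(c\mu_n)^{1/\beta} t} \frac{\d v}{(1+v^\beta)^p} \;\le\; (c\mu_n)^{-1/\beta} \int_0^\infty \frac{\d v}{(1+v^\beta)^p},
\end{equation*}
since the integrand is nonnegative.

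It remains to check that $I(\beta,p) := \int_0^\infty (1+v^\beta)^{-p}\,\d v$ is finite. The integrand is continuous and bounded on $[0,1]$, and as $v \to \infty$ it behaves like $v^{-\beta p}$; thus convergence at infinity requires $\beta p > 1$, which is precisely the hypothesis $\beta \in (1/p,\,1)$. So $I(\beta,p)$ is a finite constant depending only on $\beta$ and $p$, and combining the displays above gives
\begin{equation*}
\int_0^t E_\beta(-\mu_n(t-s)^\beta)^p\,\d s \;\le\; c^{-1/\beta}\,I(\beta,p)\,\mu_n^{-1/\beta},
\end{equation*}
which is the claimed bound. There is no substantive obstacle: the role of the hypothesis $\beta > 1/p$ is exactly to make the rescaled integral integrable at infinity, and the role of the Mittag-Leffler upper bound is to provide the algebraic decay that the Caputo setting requires (an exponential bound would have sufficed in the classical $\beta=1$ case, but here only the polynomial tail of $E_\beta$ is available).
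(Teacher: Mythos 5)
Your proof is correct. It rests on the same key input as the paper's argument, namely the upper Mittag--Leffler bound in \eqref{JNA-6} together with the hypothesis $\beta p>1$, but the evaluation of the resulting integral is carried out differently: the paper bounds $(1+c\mu_n u^\beta)^{-p}$ by $1\wedge(\mu_n u^\beta)^{-p}$ and splits the time integral at the crossover point $u=\mu_n^{-1/\beta}$, computing each piece explicitly, whereas you perform the substitution $v=(c\mu_n)^{1/\beta}u$ and reduce everything to the single universal integral $\int_0^\infty(1+v^\beta)^{-p}\,\d v$, whose convergence is exactly the condition $\beta p>1$. Your scaling argument is slightly cleaner: it makes the origin of the exponent $-1/\beta$ transparent, it avoids the case analysis that the paper's splitting implicitly requires when $t<\mu_n^{-1/\beta}$ (where the split point is negative and the crude bound $\int_0^t 1\,\d s\le\mu_n^{-1/\beta}$ must be invoked separately), and it packages the $(\beta,p)$-dependence of the implicit constant into $c^{-1/\beta}I(\beta,p)$, which is harmless since the lemma's constant is allowed to depend on these fixed parameters, just as the paper's constant $C\left(\frac{1}{p\beta-1}+1\right)$ does.
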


\begin{proof}
Using the Mittag-Leffler function bounds given in \eqref{JNA-6}, we get

\begin{align} \label{JNA-23}
\int_0^t E_\beta(-\mu_{n}(t-s)^\beta)^p \,\d s &\leq C\, \int_0^t \left (1+\Gamma (1+\beta)^{-1} \mu_n (t-s)^{\beta} \right)^{-p} \,\d s \nonumber\\
&\leq C\, \int_0^t \left (1 \wedge  \left (\mu_n (t-s)^{\beta} \right)^{-p} \right) \,\d s,
\end{align}
where

\begin{equation}\label{JNA-24}
1 \wedge \left (\mu_n (t-s)^{\beta} \right)^{-p} =
\begin{cases}
1 & \text{if }  \left (\mu_n (t-s)^{\beta} \right)^{-p} > 1,\\
\left (\mu_n (t-s)^{\beta} \right)^{-p} & \text{if }  \left (\mu_n (t-s)^{\beta} \right)^{-p} \leq 1.
\end{cases}
\end{equation}
Now
\begin{align} \label{JNA-25}
\left (\mu_n (t-s)^{\beta} \right)^{-p} \leq 1 &\implies \mu_n (t-s)^{\beta} \geq 1 \nonumber\\
&\implies  (t-s)^{\beta} \geq \frac{1}{\mu_n}  \nonumber\\
&\implies  t-s \geq \left (\frac{1}{\mu_n} \right) ^{\frac{1}{\beta}}  \nonumber\\
&\implies  t-\mu_n^{-\frac{1}{\beta}} \geq s.
\end{align}

So, from \eqref{JNA-24} using \eqref{JNA-25} we obtain
\begin{align} \label{JNA-26b}
\int_0^t E_\beta(-\mu_{n}(t-s)^\beta)^p \,\d s &\leq C\, \int_0^t \left (1 \wedge  \left (\mu_n (t-s)^{\beta} \right)^{-p} \right)  \,\d s\\
&= C  \,\left (\int_0^{t-\mu_n^{-\frac{1}{\beta}}} \left (\mu_n (t-s)^{\beta}\right)^{-p} \d s +\int_{t-\mu_n^{-\frac{1}{\beta}}}^t \d s \right) \nonumber\\
&= C   \,\left(\frac{1}{\mu_n^p}\int_0^{t-\mu_n^{-\frac{1}{\beta}}}    (t-s)^{-p\beta} \d s \nonumber+\mu_n^{-\frac{1}{\beta}} \right) \nonumber\\
&= C  \,\left (\frac{1}{\mu_n^p(1-p\beta)}\left(t^{-p\beta+1}-\left (\frac{1}{\mu_n} \right) ^{-p+\frac{1}{\beta}}\right) + \mu_n^{-\frac{1}{\beta}} \right) \nonumber\\
&\leq C \,\left(\frac{\mu_n^{p-\frac{1}{\beta}}}{\mu_n^p(p\beta -1)} + \mu_n^{-\frac{1}{\beta}} \right )  \nonumber\\
 &\leq C \,\left (\frac{1}{p\beta-1}+1 \right) \mu_n^{-\frac{1}{\beta}} \,\nonumber\\
&\lesssim \mu_{n}^{-\frac{1}{\beta}},
\end{align}
since $\frac{t^{-p\beta+1}}{1-p\beta}<0$.

\end{proof}

\begin{lemma} \label{JNA-45}
For $p \geq 2$, $\beta \in \left (\frac{1}{p}, \,1\right )$ and $d<\frac{\alpha}{\beta}$, we have
\begin{equation}
  \int_0^t\sum_{n=1}^\infty E_\beta(-\mu_{n}(t-s)^\beta)^p \d s < \infty,
\end{equation}
for every $t>0$.
\end{lemma}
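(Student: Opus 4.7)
The plan is to reduce this directly to the two results already in hand: the single-mode bound of Lemma \ref{JNA-bd1} and the eigenvalue lower bound of Lemma \ref{Lema-1}. Since every summand is nonnegative, I would first apply Tonelli's theorem to interchange the sum and the integral, writing
\begin{equation*}
\int_0^t\sum_{n=1}^\infty E_\beta(-\mu_{n}(t-s)^\beta)^p \,\d s = \sum_{n=1}^\infty \int_0^t E_\beta(-\mu_{n}(t-s)^\beta)^p \,\d s.
\end{equation*}

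Next I would apply Lemma \ref{JNA-bd1}, which under the standing hypothesis $\beta\in(1/p,1)$ gives $\int_0^t E_\beta(-\mu_{n}(t-s)^\beta)^p \,\d s \lesssim \mu_n^{-1/\beta}$ uniformly in $t>0$, so that the whole expression is controlled by $\sum_{n\geq 1}\mu_n^{-1/\beta}$. Then I would invoke part (1) of Lemma \ref{Lema-1}, giving $\mu_n \geq C_1 n^{\alpha/d}$, hence $\mu_n^{-1/\beta}\leq C_1^{-1/\beta} n^{-\alpha/(d\beta)}$, and conclude by comparison with the $p$-series: the tail sum converges precisely when $\alpha/(d\beta)>1$, i.e., when $d<\alpha/\beta$, which is exactly the assumption of the lemma.

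There is no real obstacle here; the only subtle point is making sure the implied constant coming out of Lemma \ref{JNA-bd1} is genuinely independent of $n$ (which it is, inspecting the proof there), so that the series bound $\sum_n \mu_n^{-1/\beta}$ is legitimate. I would state the resulting bound explicitly as
\begin{equation*}
\int_0^t\sum_{n=1}^\infty E_\beta(-\mu_{n}(t-s)^\beta)^p \,\d s \;\leq\; C\sum_{n=1}^\infty \mu_n^{-1/\beta} \;\leq\; C\,C_1^{-1/\beta}\sum_{n=1}^\infty n^{-\alpha/(d\beta)} \;<\;\infty,
\end{equation*}
which, being uniform in $t$, gives a slightly stronger conclusion than stated and will be convenient for the later applications to Theorems \ref{Thrm-3} and \ref{Thrm-7}.
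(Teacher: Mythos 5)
Your proposal is correct and follows exactly the paper's own argument: interchange the sum and integral (justified by nonnegativity), apply Lemma \ref{JNA-bd1} to bound each term by $\mu_n^{-1/\beta}$, then use Lemma \ref{Lema-1}(a) to compare with $\sum_n n^{-\alpha/(d\beta)}$, which converges since $d<\alpha/\beta$. The additional remark about uniformity in $t$ is a fair observation but does not change the substance of the proof.
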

\begin{proof} Using Lemma \ref{Lema-1}(a) and Lemma \ref{JNA-bd1}, we obtain the following:
\begin{align} \label{JNA-23-b}
\int_0^t\sum_{n=1}^\infty E_\beta(-\mu_{n}(t-s)^\beta)^p \d s &= \sum_{n=1}^\infty \int_0^t E_\beta(-\mu_{n}(t-s)^\beta)^p \d s \nonumber\\
&\lesssim \sum_{n=1}^\infty \mu_n^{-\frac{1}{\beta}} \, \nonumber\\
&\lesssim \sum_{n=1}^\infty n^{-\frac{\alpha}{\beta d}} < \infty,
\end{align}
since $\frac{\alpha}{\beta d}>1$.
\end{proof}

\begin{lemma} \label{JNA-45b}
For $p \geq 2$,  $\beta \in \left (\frac{1}{p}, \,1\right ),$ if $d<\frac{\alpha}{2\beta}$ then we have
\begin{equation}
  \int_0^t\sum_{n=1}^\infty E_\beta(-\mu_{n}(t-s)^\beta)^p \varphi_n^2(\text{x}) \,\d s < \infty,
\end{equation}
for every $t>0$.
\end{lemma}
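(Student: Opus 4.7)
The plan is to follow the same template used in the proof of Lemma \ref{JNA-45}, but now keeping track of the extra factor $\varphi_n^2(\mathrm{x})$ which forces the sharper dimension restriction $d<\alpha/(2\beta)$ instead of $d<\alpha/\beta$. First I would swap the order of summation and integration by Tonelli (all terms are nonnegative), writing
\begin{equation*}
\int_0^t \sum_{n=1}^\infty E_\beta(-\mu_n(t-s)^\beta)^p \varphi_n^2(\mathrm{x})\,\mathrm{d}s
= \sum_{n=1}^\infty \varphi_n^2(\mathrm{x}) \int_0^t E_\beta(-\mu_n(t-s)^\beta)^p\,\mathrm{d}s.
\end{equation*}

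Next I would apply Lemma \ref{JNA-bd1} to the inner integral, valid since $p\geq 2$ and $\beta\in(1/p,1)$, to get
\begin{equation*}
\int_0^t E_\beta(-\mu_n(t-s)^\beta)^p\,\mathrm{d}s \lesssim \mu_n^{-1/\beta}.
\end{equation*}
Combined with Lemma \ref{Lema-1}(b), which gives $\varphi_n^2(\mathrm{x}) \leq C^2 \mu_n^{d/\alpha}$ uniformly in $\mathrm{x}\in B$, the sum is bounded by
\begin{equation*}
C^2 \sum_{n=1}^\infty \mu_n^{d/\alpha - 1/\beta}.
\end{equation*}

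Finally I would use the two-sided eigenvalue bound $C_1 n^{\alpha/d}\leq \mu_n\leq C_2 n^{\alpha/d}$ from Lemma \ref{Lema-1}(a). Since $d/\alpha - 1/\beta < 0$ (this follows from $d<\alpha/(2\beta) < \alpha/\beta$), the quantity $\mu_n^{d/\alpha-1/\beta}$ is a decreasing function of $\mu_n$, so we should use the lower bound $\mu_n \geq C_1 n^{\alpha/d}$ to get
\begin{equation*}
\sum_{n=1}^\infty \mu_n^{d/\alpha - 1/\beta} \lesssim \sum_{n=1}^\infty n^{(\alpha/d)(d/\alpha - 1/\beta)} = \sum_{n=1}^\infty n^{1 - \alpha/(\beta d)}.
\end{equation*}
This series converges precisely when $1 - \alpha/(\beta d) < -1$, i.e.\ when $d < \alpha/(2\beta)$, which is exactly the hypothesis. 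There is no real obstacle here once one has Lemma \ref{JNA-bd1} and Lemma \ref{Lema-1}; the only point to be careful about is that the extra factor $\mu_n^{d/\alpha}$ coming from $\varphi_n^2$ tightens the summability threshold from $\alpha/(\beta d) > 1$ (as in Lemma \ref{JNA-45}) to $\alpha/(\beta d) > 2$, which is precisely what the sharper hypothesis $d<\alpha/(2\beta)$ accommodates.
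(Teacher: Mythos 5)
Your proposal is correct and follows essentially the same route as the paper's proof: exchange the sum and integral, apply Lemma \ref{JNA-bd1} to bound the time integral by $\mu_n^{-1/\beta}$, use Lemma \ref{Lema-1}(b) for $\varphi_n^2(\mathrm{x})\lesssim \mu_n^{d/\alpha}$, and then Lemma \ref{Lema-1}(a) to reduce to the series $\sum_n n^{1-\alpha/(\beta d)}$, convergent since $\alpha/(\beta d)>2$. Your remark about using the lower eigenvalue bound because the exponent $d/\alpha-1/\beta$ is negative is a correct (and slightly more careful) articulation of the same step the paper performs implicitly.
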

\begin{proof} We prove the result using by Lemma \ref{Lema-1}(b)  and Lemma \ref{JNA-bd1} as follows
\begin{align} \label{JNA-23b}
\int_0^t\sum_{n=1}^\infty E_\beta(-\mu_{n}(t-s)^\beta)^p \varphi_n^2(\text{x}) \,\d s &= \sum_{n=1}^\infty \int_0^t E_\beta(-\mu_{n}(t-s)^\beta)^p \varphi_n^2(\text{x}) \,\d s \nonumber\\
&\leq C\,\sum_{n=1}^\infty \mu_n^{\frac{d}{\alpha}} \mu_n^{-\frac{1}{\beta}}  \nonumber\\
&\lesssim \,\sum_{n=1}^\infty n^{\frac{\alpha}{d}(\frac{d}{\alpha}-\frac{1}{\beta})} \, \nonumber\\
&\lesssim \,\sum_{n=1}^\infty n^{1-\frac{\alpha}{\beta d}} < \infty,
\end{align}
since $\frac{\alpha}{\beta d}>2$.
\end{proof}

The following continuity Lemma is crucial to justify the conclusion of Theorem \ref{Thrm-4}.
\begin{lemma}\label{JNA-27} Assume that $\{\varphi_n\}_{n\geq1}$ are uniformly bounded by a constant $C(B)$, depending on the geometrical characteristics of the domain $B$, i.e.,
\begin{equation*}
    C(B) = \sup_{n\geq 1,\text{x} \in B} \varphi_n(\text{x}).
\end{equation*}
Fix $t>0$, then for $d < \alpha$ we have
\begin{equation*}
\lim_{\gamma \rightarrow \beta} \sup_{x \in B} \left |(\mathcal{G}_B^{(\gamma)} u_0)_t(x)-(\mathcal{G}_B^{(\beta)} u_0)_t(x) \right |=0,
\end{equation*}
where
\begin{equation*}
(\mathcal{G}_B^{(\gamma)} u_0)_t(x):=\int_B G_B^{(\gamma)}(t,\,\text{x},\,\text{y}) u_0(\text{y})\,\d \text{y}.
\end{equation*}
\end{lemma}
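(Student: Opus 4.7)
The plan is to use the spectral expansion \eqref{JNA-4} of $G_B^{(\beta)}$ to write
\begin{equation*}
(\mathcal{G}_B^{(\gamma)} u_0)_t(x) - (\mathcal{G}_B^{(\beta)} u_0)_t(x) = \sum_{n=1}^{\infty}\bigl[E_\gamma(-\mu_n t^\gamma) - E_\beta(-\mu_n t^\beta)\bigr]\,\varphi_n(x)\,a_n,
\end{equation*}
where $a_n := \int_B \varphi_n(y)\,u_0(y)\,\d y$, and then split this series at some cutoff $N$ to be chosen via the tail estimate. Because $|\varphi_n|\le C(B)$ uniformly and $u_0$ is bounded on the bounded domain $B$, one immediately has $|\varphi_n(x)\,a_n|\le C(B)^2\|u_0\|_\infty|B|=:M$ uniformly in $n\ge 1$ and $x\in B$; this uniform coefficient bound is the key consequence of the eigenfunction hypothesis.

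For the tail $n>N$, I would restrict $\gamma$ to a compact subinterval $I\subset(0,1)$ containing $\beta$ in its interior and apply the upper bound in \eqref{JNA-6} to get $E_\gamma(-\mu_n t^\gamma)\le \Gamma(1+\gamma)\,\mu_n^{-1}t^{-\gamma}\le K_t\,\mu_n^{-1}$, with $K_t$ independent of $n$ and of $\gamma\in I$, and analogously for $E_\beta(-\mu_n t^\beta)$. Combining with the eigenvalue lower bound $\mu_n\ge C_1 n^{\alpha/d}$ from Lemma~\ref{Lema-1}(a) yields
\begin{equation*}
\sup_{x\in B}\,\sum_{n>N}\bigl|E_\gamma(-\mu_n t^\gamma)-E_\beta(-\mu_n t^\beta)\bigr|\,|\varphi_n(x)\,a_n|\;\le\;2MK_tC_1^{-1}\sum_{n>N} n^{-\alpha/d},
\end{equation*}
which tends to $0$ as $N\to\infty$ precisely because $d<\alpha$. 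Thus, given $\varepsilon>0$, I can choose $N$ so that this tail is at most $\varepsilon/2$, uniformly for $\gamma\in I$ and $x\in B$.

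For the head $n\le N$, the map $\gamma\mapsto E_\gamma(-\mu_n t^\gamma)$ is continuous at $\gamma=\beta$, since the power series $E_\beta(z)=\sum_{k\ge 0} z^k/\Gamma(1+\beta k)$ converges uniformly on compact sets in $I\times\{|z|\le R\}$ (the coefficients $1/\Gamma(1+\beta k)$ are continuous in $\beta\in I$ and decay super-exponentially by Stirling) and $\gamma\mapsto -\mu_n t^\gamma$ is continuous. The resulting finite sum of at most $N$ terms, each multiplied by a factor of modulus $\le M$, therefore tends to $0$ uniformly in $x\in B$ as $\gamma\to\beta$. Taking $\gamma$ close enough to $\beta$ makes this head at most $\varepsilon/2$ and completes the proof. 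The main obstacle is precisely the uniformity in $x$ of the tail estimate: without the assumption $|\varphi_n|\le C(B)$ the natural substitute from Lemma~\ref{Lema-1}(b) would introduce factors $\mu_n^{d/\alpha}$, which would both require a strictly stronger condition than $d<\alpha$ and destroy uniform pointwise control of the series; this is why the eigenfunction boundedness assumption is essential.
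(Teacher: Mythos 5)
Your proof is correct, and its skeleton is the same as the paper's: expand via \eqref{JNA-4}, use the uniform bound $|\varphi_n|\le C(B)$ together with the Mittag--Leffler bound \eqref{JNA-6} and the eigenvalue asymptotics of Lemma \ref{Lema-1}(a) to get a summable majorant $\lesssim n^{-\alpha/d}$ (this is exactly where $d<\alpha$ enters in both arguments), and then pass to the limit term by term. The two places where you deviate are worth noting. First, the paper justifies $E_\gamma(-\mu_n t^\gamma)\to E_\beta(-\mu_n t^\beta)$ by comparing Laplace transforms, $\int_0^\infty e^{-\theta t}E_\gamma(-\mu_n t^\gamma)\,\d t=\theta^{\gamma-1}/(\theta^\gamma+\mu_n)$, and reading off the pointwise limit; you instead prove joint continuity of $(\gamma,z)\mapsto E_\gamma(z)$ directly from the power series, using that the coefficients $1/\Gamma(1+\gamma k)$ are continuous in $\gamma$ and decay super-exponentially uniformly on a compact subinterval of $(0,1)$. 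Your route is the more self-contained one, since convergence of Laplace transforms alone does not immediately yield pointwise convergence of the integrands without an extra inversion or continuity-theorem step. Second, where the paper invokes dominated convergence for the series, you carry out an explicit head/tail splitting with the tail controlled uniformly in $\gamma$ (on a compact neighborhood of $\beta$) and in $x\in B$; this makes the uniformity in $x$ of the stated limit completely transparent, whereas in the paper it is implicit in the fact that the dominating bound is independent of $x$. So the proposal is a sound, slightly more careful rendering of the same argument rather than a different method.
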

\begin{proof}
Since the initial datum $u_0$ and $G_B^{(\gamma)} (t,\,x,\,y)$ are given to be bounded from above, it suffices to show that
\begin{equation*}
\lim_{\gamma \rightarrow \beta} \sup_{x \in B} \left |G_B^{(\gamma)}(t,\,x,\,y)-G_B^{(\beta)} (t,\,x,\,y)\right |=0.
\end{equation*}
Now using the Laplace transform of the Mittag-Leffler function, for any $\theta >0$ we have
\begin{align*}
\lim_{\gamma \rightarrow \beta}\int_0^\infty e^{-\theta t}E_\gamma(-\mu_n t^\gamma) \,\d t &= \lim_{\gamma \rightarrow \beta}\frac{\theta^{\gamma-1}}{\theta^\gamma+\mu_n}=\int_0^\infty e^{-\theta t}E_\beta(-\mu_n t^\beta) \,\d t.
\end{align*}
This implies that
\begin{align}\label{JNA-28}
\lim_{\gamma \rightarrow \beta} E_\gamma(-\mu_n t^\gamma)= E_\beta(-\mu_n t^\beta).
\end{align}
Now using the expansions for the heat kernel, we have
\begin{align} \label{JNA-29}
|G_B^{(\beta)}(t,\,x,\,y)-G_B^{(\gamma)}(t,\,x,\,y)|&\leq \sum_{n=1}^\infty \left [ |E_\beta(-\mu_n t^\beta)- E_\gamma (-\mu_nt^\gamma)||\varphi_n (\text{x})||\varphi_n (\text{y})|\right].
\end{align}
Taking limit as $\gamma \rightarrow \beta$ on both sides of \eqref{JNA-29} we obtain the required result, since each terms in the summation can be bounded by a quantity independent of $\gamma$ and $\beta$. This is due to the uniform boundedness of the eigenfunctions $\{\varphi_n\}_{n\geq1}$ and the bounds on the Mittag-Leffler function together with Lemma \ref{Lema-1}(a), and summable  as shown below.\\ For any  $t>0$, then we obtain
\begin{align} \label{JNA-29b}
\sum_{n=1}^\infty \bigg[ |E_\beta(-\mu_n t^\beta)  -E_\gamma (-\mu_n t^\gamma)|&|\varphi_n (\text{x})||\varphi_n (\text{y})| \bigg] \leq [C(B)]^2 \sum_{n=1}^\infty \left | E_\beta(-\mu_n t^\beta)- E_\gamma (-\mu_n t^\gamma) \right| \nonumber \\
&\leq [C(B)]^2\sum_{n=1}^\infty \left [ \frac{1}{1 + \Gamma(1+\beta)^{-1}\mu_n t^\beta} + \frac{1}{1 + \Gamma(1+\gamma)^{-1}\mu_n t^\gamma}\right] \nonumber \\
&\leq [C(B)]^2\sum_{n=1}^\infty \frac{1}{\mu_n} \left (\frac{1}{t^\beta} + \frac{1}{t^\gamma} \right) \nonumber \\
&\leq 2[C(B)]^2 \text{max}(1, 1/t)\sum_{n=1}^\infty n^{-\frac{\alpha}{d}}< \infty,
\end{align}
where $C$ is a constant positive real number.
\end{proof}
The following lemma is useful in the proof of Lemma \ref{nane-tuan}(b) and Proposition \ref{nane-tuan-2022-b}.
\begin{lemma} \label{Integral-bound} If $\beta\in (\beta_0, \beta_1)$ for some $\beta_0, \beta_1 \in (0,1)$ and $\frac{d}{\alpha}<\lambda<\frac{1}{2\beta}$, then using the inequality \\$e^{-z} \le  \mathscr C_\mu z^{-\mu} $, we bound the following integral as follows:
\begin{equation}
    \int_0^\infty e^{-2\theta s/p} \frac{1}{s^{2\lambda \beta}} \d s  \lesssim  p^{1-2\lambda \beta_0}\left [ \frac{\mathscr C_{\mu_0}^2 \theta^{-2\mu_0}}{1-2\lambda  -2\mu_0}  +  \frac{\mathscr C_{\mu_1}^2 \theta^{-2\mu_1}}{2\lambda \beta_0 +2\mu_1-1} \right],
\end{equation}
where $0< \mu_0 < \min\Big(\frac{1}{2}- \lambda ,~~\frac{1-2\beta_1 \lambda}{2} \Big)$ and $\mu_1> \frac{1- 2 \beta_0 \lambda}{2}$.
\end{lemma}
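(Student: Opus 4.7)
The plan is to split the integral at $s=p$ and on each piece apply the polynomial majorant $e^{-z}\le \mathscr{C}_\mu z^{-\mu}$ in squared form
$$e^{-2\theta s/p}=\bigl(e^{-\theta s/p}\bigr)^{2}\le \mathscr{C}_\mu^{2}\bigl(\theta s/p\bigr)^{-2\mu},$$
with two \emph{different} choices of $\mu$: a small $\mu_0$ on $(0,p)$ (where the singularity $s^{-2\lambda\beta}$ forbids an additional large negative power near $0$) and a large $\mu_1$ on $(p,\infty)$ (where integrability at infinity forces $2\mu+2\lambda\beta>1$). The squared form of the bound is precisely what produces the factors $\mathscr{C}_{\mu_j}^{2}$ and $\theta^{-2\mu_j}$ on the right-hand side of the claim.

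On $(0,p)$ the bound with $\mu=\mu_0$ reduces the problem to
$$\mathscr{C}_{\mu_0}^{2}\,(\theta/p)^{-2\mu_0}\int_{0}^{p} s^{-2\mu_0-2\lambda\beta}\,ds=\frac{\mathscr{C}_{\mu_0}^{2}\,\theta^{-2\mu_0}\,p^{1-2\lambda\beta}}{1-2\mu_0-2\lambda\beta},$$
where convergence at $0$ uses $\mu_0<(1-2\lambda\beta_1)/2\le(1-2\lambda\beta)/2$. The symmetric computation on $(p,\infty)$ with $\mu=\mu_1$ gives
$$\mathscr{C}_{\mu_1}^{2}\,(\theta/p)^{-2\mu_1}\int_{p}^{\infty} s^{-2\mu_1-2\lambda\beta}\,ds=\frac{\mathscr{C}_{\mu_1}^{2}\,\theta^{-2\mu_1}\,p^{1-2\lambda\beta}}{2\mu_1+2\lambda\beta-1},$$
with convergence at infinity from $\mu_1>(1-2\lambda\beta_0)/2\ge(1-2\lambda\beta)/2$. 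The crucial design choice is that splitting exactly at $s=p$ is what makes the prefactor $(\theta/p)^{-2\mu}$ combine with the $p^{1-2\mu-2\lambda\beta}$ coming from the antiderivative to leave the common, clean power $p^{1-2\lambda\beta}$ in both pieces.

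The last step is to remove the residual $\beta$--dependence so as to bound both pieces uniformly in $\beta\in(\beta_0,\beta_1)$. For $p\ge 1$ (which holds in the intended applications, where $p$ is a moment exponent $\ge 2$), monotonicity gives $p^{1-2\lambda\beta}\le p^{1-2\lambda\beta_0}$. For the denominators, $\beta<\beta_1<1$ yields $1-2\mu_0-2\lambda\beta>1-2\mu_0-2\lambda$, and $\beta>\beta_0$ yields $2\mu_1+2\lambda\beta-1\ge 2\mu_1+2\lambda\beta_0-1$, both strictly positive under the stated hypotheses on $\mu_0,\mu_1$. Summing the two pieces then produces the inequality in the lemma. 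There is no real obstacle here: the argument is a bookkeeping estimate. The only conceptual point is recognizing that a single power-law majorant of $e^{-z}$ cannot simultaneously tame the singularity at $0$ and provide the decay needed at $\infty$, which is what forces the two-parameter splitting described above.
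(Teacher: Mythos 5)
Your proof is correct and is essentially the paper's own argument: splitting at $s=p$ and applying the squared majorant $e^{-2\theta s/p}\le \mathscr C_\mu^2(\theta s/p)^{-2\mu}$ with $\mu_0$ near the origin and $\mu_1$ at infinity is exactly the paper's substitution $s=pr$ followed by a split at $r=1$, and your handling of the $\beta$-uniformity (replacing $p^{1-2\lambda\beta}$ by $p^{1-2\lambda\beta_0}$ for $p\ge1$ and bounding the denominators via $\beta_0<\beta<\beta_1<1$) matches the paper's final step, with the added merit of making the implicit assumption $p\ge1$ explicit.
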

\begin{proof} Consider the integral
\begin{align}
    \int_0^\infty e^{-2\theta s/p} \frac{1}{s^{2\lambda \beta}} \d s &= p^{1-2\lambda \beta}\int_0^\infty  \frac{e^{-2\theta r}}{r^{2\lambda \beta}}  \d r \nonumber\\
     &=  p^{1-2\lambda \beta}\left [\int_0^1  \frac{e^{-2\theta r}}{r^{2\lambda \beta}}  \d r +\int_1^\infty  \frac{e^{-2\theta r}}{r^{2\lambda \beta}}  \d r\right]\nonumber\\
     &\lesssim  p^{1-2\lambda \beta}\left [\mathscr C_{\mu_0}^2 \theta^{-2\mu_0}\int_0^1  \frac{1}{r^{2\lambda \beta+2\mu_0}}  \d r +\mathscr C_{\mu_1}^2 \theta^{-2\mu_1}\int_1^\infty  \frac{1}{r^{2\lambda \beta+2\mu_1}}  \d r\right]\nonumber\\
    &\lesssim  p^{1-2\lambda \beta_0}\left [ \frac{\mathscr C_{\mu_0}^2 \theta^{-2\mu_0}}{1-2\lambda  -2\mu_0}  +  \frac{\mathscr C_{\mu_1}^2 \theta^{-2\mu_1}}{2\lambda \beta_0 +2\mu_1-1} \right],
\end{align}
where $\beta\in (\beta_0, \beta_1)$ for some $\beta_0, \beta_1 \in (0,1)$,   $\mu_0 < \min\Big( \frac{1}{2}- \lambda,~~\frac{1-2\beta_1 \lambda}{2}\Big)<\min\Big( \frac{1}{2}- \lambda,~~\frac{1-2\beta \lambda}{2}\Big)$ and $\mu_1> \frac{1- 2 \beta_0 \lambda}{2}$, which implies that the proper integral $\int_0^1  \frac{1}{r^{2\beta \lambda + 2\mu_0}} dr$ and $\int_1^\infty   \frac{1}{r^{2\beta \lambda + 2\mu_1}} dr$ are convergent.
\end{proof}
\begin{lemma}[Nane and Tuan, 2022 \cite{nane-tuan-2022}] \label{nane-tuan} Assume that $\{\varphi_n\}_{n\geq1}$ are uniformly bounded by a constant $C(B)$, depending on the geometrical characteristics of the domain $B$, i.e.,
\begin{equation*}
    C(B) = \sup_{n\geq 1,\text{x} \in B} \varphi_n(\text{x}).
\end{equation*}
	a) Let $\beta\in (\frac{1}{2},1)$, and   $\frac{d}{\alpha}<\lambda<\frac{1}{2\beta}$. Then there exists a constant $C=C(\lambda)$ independent of $\beta$ such that
	\begin{align} \label{G1}
	|G_B^{(\beta)}(t,\,x,\,y)| \leq  C {t^{-\beta \lambda}}.
	\end{align}	
	b)  Let $\beta\in (\beta_0, \beta_1)$ for some $\beta_0, \beta_1 \in (0,1)$. Let also $0< \mu_0 < \min\Big(\frac{1}{2}- \lambda ,~~\frac{1-2\beta_1 \lambda}{2} \Big)$ and $\mu_1> \frac{1- 2 \beta_0 \lambda}{2}$. Then for $d < \frac{\alpha}{2\beta}$, there exists positive constant $C$ which depends on $\mu_0, \mu_1, p$ and independent of $\beta$ such that
\begin{align} \label{a12}
    \bigg[\int_0^\infty\int_B & e^{-\frac{2\theta s}{p}} |G_B^{(\beta)}(s,\,x,\,y)|^2 \d y \,\d s \,\bigg]^\frac{p}{2} \lesssim C   p^{\frac{p(1-2\lambda \beta_0)}{2}}\left ( \theta^{-2\mu_0}  +  \theta^{-2\mu_1} \right)^{p/2}.
\end{align}
\end{lemma}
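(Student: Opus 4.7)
The plan is to use the spectral expansion \eqref{JNA-4} for $G_B^{(\beta)}$ together with an interpolation bound on the Mittag-Leffler function that is controllable in $z$ and uniform in $\beta$ on compact subsets of $(0,1)$. The key observation is that \eqref{JNA-6} gives $E_\beta(-z)\le (1+\Gamma(1+\beta)^{-1}z)^{-1}$, so combined with the trivial bound $E_\beta(-z)\le 1$ one obtains, for every $\lambda\in[0,1]$,
\[
E_\beta(-z)\ \le\ C_\lambda\, z^{-\lambda},\qquad z>0,
\]
with $C_\lambda$ depending only on $\lambda$ and on a lower bound of $\Gamma(1+\beta)$, which stays bounded away from $0$ on any compact sub-interval of $(0,1)$. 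This is the workhorse for both parts.

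For part (a), starting from the uniform bound $|\varphi_n(x)|\le C(B)$ and \eqref{JNA-4} I would write
\[
|G_B^{(\beta)}(t,x,y)|\ \le\ [C(B)]^2\sum_{n=1}^\infty E_\beta(-\mu_n t^\beta)\ \le\ C\, t^{-\beta\lambda}\sum_{n=1}^\infty \mu_n^{-\lambda},
\]
and invoke Lemma \ref{Lema-1}(a) to replace the sum by $\sum_n n^{-\alpha\lambda/d}$, which converges precisely when $\lambda>d/\alpha$. The condition $\lambda<1/(2\beta)$ combined with $\beta>1/2$ forces $\lambda<1$, so the interpolation bound is applicable; thus the constant depends only on $\lambda$, not on $\beta$, as required.

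For part (b), orthonormality of $\{\varphi_n\}$ in $L^2(B)$ collapses the spatial integral to a sum:
\[
\int_B |G_B^{(\beta)}(s,x,y)|^2\,dy\ =\ \sum_{n=1}^\infty E_\beta(-\mu_n s^\beta)^2\,\varphi_n^2(x).
\]
Bounding $\varphi_n^2(x)\le [C(B)]^2$ and applying the same interpolation with the chosen $\lambda\in(d/\alpha,1/(2\beta))$ (this window is nonempty by the hypothesis $d<\alpha/(2\beta)$) gives
\[
\int_B|G_B^{(\beta)}(s,x,y)|^2\,dy\ \le\ C\,s^{-2\beta\lambda}\sum_{n=1}^\infty \mu_n^{-2\lambda},
\]
and the sum is again finite by Lemma \ref{Lema-1}(a) since $2\lambda>2d/\alpha>d/\alpha$. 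Plugging this into the time integral reduces the problem to bounding $\int_0^\infty e^{-2\theta s/p}s^{-2\beta\lambda}\,ds$, which is exactly the quantity controlled by Lemma \ref{Integral-bound}; raising the resulting bound to the power $p/2$ yields \eqref{a12}.

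The only real subtlety is ensuring every constant is uniform for $\beta$ in a neighborhood. The dangers are (i) $\Gamma(1+\beta)^{-1}$ blowing up, which is avoided because $\Gamma$ is bounded below on $[\beta_0,\beta_1]\subset(0,1)$; and (ii) the improper integral in $s$ failing to converge at $0$ or at $\infty$ as $\beta$ moves. Lemma \ref{Integral-bound} handles (ii) by splitting $[0,\infty)=[0,1]\cup[1,\infty)$ and using $e^{-z}\le \mathscr C_\mu z^{-\mu}$ with two different exponents $\mu_0,\mu_1$; the ranges $\mu_0<(1-2\beta_1\lambda)/2$ and $\mu_1>(1-2\beta_0\lambda)/2$ are precisely what guarantees convergence of the two pieces uniformly for $\beta\in(\beta_0,\beta_1)$. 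Once these bookkeeping constraints are respected, the proof is a direct chain of estimates with no further obstacle.
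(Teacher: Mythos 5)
Your proof is correct and follows essentially the same route as the paper: bound $E_\beta(-z)\lesssim z^{-\lambda}$ uniformly in $\beta$ (the paper quotes this bound, you derive it by interpolating \eqref{JNA-6} with $E_\beta\le 1$), sum the eigenfunction expansion using Lemma \ref{Lema-1}(a) and the uniform bound $C(B)$, and reduce part (b) to the integral $\int_0^\infty e^{-2\theta s/p}s^{-2\beta\lambda}\,\d s$, which is controlled by Lemma \ref{Integral-bound}. The only cosmetic difference is that in (b) you compute the $y$-integral exactly by orthonormality before estimating, whereas the paper squares the pointwise bound from part (a) and integrates over the bounded domain; both reduce to the same time integral and the same conclusion.
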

\begin{proof}
	(a) Using the fact that $E_{\beta} (-\mu_n t^\beta )\le \frac{C}{(1+\mu_n t^\beta)^\lambda}$ for any $\frac{d}{\alpha}<\lambda<\frac{1}{2\beta} $ and $C$ is independent of $\beta$,  we find that
	\begin{align}
	    |G_B^{(\beta)}(t,\,x,\,y)| &=\bigg|\sum_{n=1}^\infty E_\beta(-\mu_ns^\beta)\varphi_n(x)\varphi_n(y)\bigg|\lesssim C \sum_{n=1}^\infty |E_\beta(-\mu_ns^\beta)| \nonumber\\
	    &\lesssim C \sum_{n=1}^\infty \frac{1}{\mu_n^{\lambda}} \frac{1}{t^{\beta \lambda}}\lesssim C  {t^{-\beta \lambda}},
	\end{align}
	where we used the eigenfunction expansion of $G_B^{(\beta)}(t,\,x,\,y)$  and  that the series $\sum_{n=1}^\infty \frac{1}{\mu_n^{\lambda}}\leq \sum_{n=1}^\infty n^{-\frac{\alpha \lambda}{d}}$ is convergent since $\lambda>\frac{d}{\alpha}$.\\
	(b) Using part (a) of this lemma  together with  Lemma \ref{Integral-bound}, we obtain
\begin{align} \label{G4}
    \int_0^\infty  \int_Be^{-2\theta s/p}\big|G_B^{(\beta)}(s,\,x,\,y) \big|^2\d y \,\d s &\lesssim  \int_0^\infty e^{-2\theta s/p} \frac{1}{s^{2\lambda \beta}} \d s \nonumber\\
   &\lesssim  p^{1-2\lambda \beta_0}\left [ \frac{\mathscr C_{\mu_0}^2 \theta^{-2\mu_0}}{1-2\lambda  -2\mu_0}  +  \frac{\mathscr C_{\mu_1}^2 \theta^{-2\mu_1}}{2\lambda \beta_0 +2\mu_1-1} \right],
\end{align}
where $\beta\in (\beta_0, \beta_1)$ for some $\beta_0, \beta_1 \in (0,1)$,   $\mu_0 < \min\Big( \frac{1}{2}- \lambda,~~\frac{1-2\beta_1 \lambda}{2}\Big)<\min\Big( \frac{1}{2}- \lambda,~~\frac{1-2\beta \lambda}{2}\Big)$ and $\mu_1> \frac{1- 2 \beta_0 \lambda}{2}$, which implies that the proper integral $\int_0^1  \frac{1}{r^{2\beta \lambda + 2\mu_0}} dr$ and $\int_1^\infty   \frac{1}{r^{2\beta \lambda + 2\mu_1}} dr$ are convergent. It  follows from \eqref{G4} that for any $p \ge 0$
	\begin{align*}
\bigg[\int_0^\infty\int_B e^{-2\theta s/p}|& G_B^{(\beta)}(s,\,x,\,y)|^2\,\d s\,\d y \bigg]^{p/2} \lesssim p^{\frac{p(1-2\lambda \beta_0)}{2}}\left [ \frac{\mathscr C_{\mu_0}^2 \theta^{-2\mu_0}}{1-2\lambda -2\mu_0}  +  \frac{\mathscr C_{\mu_1}^2 \theta^{-2\mu_1}}{2\lambda \beta_0 +2\mu_1-1} \right] ^{p/2},
\end{align*}
which allows us to deduce \eqref{a12}.
That is, each terms in the summation can be bounded by a quantity independent of $\beta$.

\end{proof}

Next we give an example where the sufficient condition of Lemma \ref{JNA-27} is satisfied.
\begin{example} \label{Ex-1} Let $X_t$ denote a Brownian motion in $\R^2$ and $X_t^B$ denote the Brownian motion killed upon exiting the rectangular domain $B:=[0,\,L_{1}]\times[0,\,L_{2}]$. The eigenvalues of the Dirichlet Laplacian are $\mu_{m,n}=(\frac{m\pi}{L_{1}})^2 + (\frac{n\pi}{L_{2}})^2$  and $\varphi_{m,n}(x, y):=\varphi_{m}(x)\varphi_{n}( y)=\frac{2}{\sqrt{L_{1}L_{2}}}\sin(\frac{m\pi x}{L_{1}})\sin(\frac{n\pi y}{L_{2}})$ are the corresponding eigenfunction so that $|\varphi_{m,n}(x. y)|\leq \frac{2}{\sqrt{L_{1}L_{2}}}$ for all $(x,y)\in B$.  A similar result is valid for the Brownian motion in higher dimensional rectangular boxes.
\end{example}

\section{Proof of Theorem \ref{Thrm-1},Theorem \ref{Thrm-2}, and Theorem \ref{Thrm-3}} \label{proof1}
In this section we give the proof of first three main theorems of the paper. We denote
\begin{equation}\label{JNA-7}
\Lambda(\theta):=\int_0^\infty e^{-\theta t}E_\beta(-\mu_{1}t^\beta)^2\,\d t,
\end{equation}
for $\theta>0$. Here it is crucial to note that we can use the bounds in \eqref{JNA-6} to observe that $\Lambda(\theta)$ tends to infinity as $\theta$ approaches to zero if and only if  $2\beta\leq 1$.
\begin{proof}[{\bf Proof of Theorem \ref{Thrm-1}}]
Let us first establish the second statement of the theorem. From the mild formulation given in \eqref{JNA-9} and using Stochastic Fubini theorem we set
\begin{align} \label{JNA-10b}
\langle u_t, \varphi_{1}\rangle &=\int_B u_t(x) \varphi_{1}(\text{x})\,\d \text{x} \nonumber\\
&=E_\beta(-\mu_{1} t^\beta)\langle u_0, \varphi_{1}\rangle\, \nonumber\\&~~~~~~~~+\lambda \int_B   \int_0^t E_\beta(-\mu_{1} (t-s)^\beta)\varphi_{1}(\text{y})\sigma(v_s(\text{y}))W(\d s\,,\d \text{y}).
\end{align}
Taking the second moment of \eqref{JNA-10b} and using the Ito-Walsh isometry, we get
\begin{align} \label{JNA-11b}
\E\langle u_t, \varphi_{1}\rangle^2&=E_\beta(-\lambda_1 t^\beta)^2 \langle u_0, \varphi_{1}\rangle^2\nonumber\\ &~~~~~~+\lambda^2\int_B\int_0^t E_\beta(-\lambda_1(t-s)^\beta)^2\varphi_{1}^2(\text{y})\E|\sigma(u_s(\text{y}))|^2\d s\d \text{y}.
\end{align}
Now using \eqref{JNA-11b} and the assumption on $\sigma$ gives
\begin{align}\label{JNA-12a}
\int_0^\infty e^{-\theta t}\E\langle u_t, \varphi_{1}\rangle^2\,\d t &= \Lambda (\theta) \,\langle u_0, \varphi_{1}\rangle^2 +\lambda^2 \Lambda (\theta) \,\int_0^\infty e^{-\theta t}\int_B \E|\sigma(u_s(\text{y}))|^2\,\varphi_{1}^2(\text{y}) \d \text{y} \,\d t. \nonumber \\
&\gtrsim  \Lambda(\theta) \langle u_0, \varphi_{1}\rangle^2+\lambda^2l_\sigma^2\Lambda(\theta)\int_0^\infty e^{-\theta t}\E\langle u_t, \varphi_{1}\rangle^2\,\d t.
\end{align}
If $\beta\in(0,\,\frac{1}{2}]$, since $\Lambda(\theta)$ tends to infinity as $\theta$ goes to zero, we can choose $\theta$ small enough and from \eqref{JNA-12a} we obtain
\begin{align}\label{JNA-13b}
\int_0^\infty e^{-\theta t}\E\langle u_t, \varphi_{1}\rangle^2\,\d t &\gtrsim  \Lambda(\theta) \langle u_0, \varphi_{1}\rangle^2+2\int_0^\infty e^{-\theta t}\E\langle u_t, \varphi_{1}\rangle^2\,\d t.
\end{align}
This implies that that for sufficiently small $\theta$, we have
\begin{equation*}
\int_0^\infty e^{-\theta t}\E\langle u_t, \varphi_{1}\rangle^2\,\d t=\infty,
\end{equation*}
which implies that for sufficiently large $t$, $\E\langle u_t, \varphi_{1}\rangle^2$ grows exponentially. Now using Cauchy-Schwarz inequality we get the following,


\begin{align*}
\E\langle u_t, \varphi_{1}\rangle^2&\leq \E \left ( \left(\int_B |u_t(x)|^2 \,\d x \right) \left (\int_B |\varphi_1(x)|^2\,\d x \right)\right)\\
&\lesssim  \sup_{\text{x}\in B}\E|u_t(\text{x})|^2.
\end{align*}
We can thus conclude that $\sup_{\text{x}\in B}\E|u_t(\text{x})|^2$ too grows exponentially fast for all values of $\lambda$. \\
Note that the first part of the conclusion of the theorem merely follows from the fact that the second term of \eqref{JNA-13b} is positive and that the first term cannot have exponential decay.
\end{proof}


\begin{proof}[\bf Proof of  Theorem \ref{Thrm-2}]
Following the lines for the proof of Theorem \ref{Thrm-1}, we get the inequality \eqref{JNA-12a};
\begin{align}\label{JNA-13}
\int_0^\infty e^{-\theta t}\E\langle u_t, \varphi_{1}\rangle^2\,\d t&\gtrsim  \Lambda(\theta) \langle u_0, \varphi_{1}\rangle^2+\lambda^2l_\sigma^2\Lambda(\theta)\int_0^\infty e^{-\theta t}\E\langle u_t,\,\varphi_{1}\rangle^2\,\d t.
\end{align}
Since $\beta\in (\frac{1}{2},\,1)$, the function $\Lambda(\theta)$ is bounded. Thus, for any fixed $\theta>0$ we can find sufficiently large $\lambda_u$ so that for all $\lambda\geq \lambda_u$, the above inequality \eqref{JNA-13} yields
\begin{align} \label{JNA-14}
\int_0^\infty e^{-\theta t}\E\langle u_t, \varphi_{1}\rangle^2\,\d t&\gtrsim  \Lambda(\theta) \langle u_0, \varphi_{1}\rangle^2+\frac{3}{2}\int_0^\infty e^{-\theta t}\E\langle u_t, \varphi_{1}\rangle^2\,\d t.
\end{align}
The result of the theorem is obtained by following the arguments of the remaining part of the proof of Theorem \ref{Thrm-1}.
\end{proof}

\begin{proof}[{\bf Proof of  Theorem \ref{Thrm-3}}]
We establish the required result using Ito-Walsh isometry and the global Lipschitz assumption on $\sigma$. The second moment of the mild formulation given by \eqref{JNA-9} becomes:
\begin{align} \label{JNA-15}
\E|u_t(x)|^2&=
|(\mathcal{G}_B^{(\beta)} u)_t(\text{x})|^2+ \lambda^2 \int_B\int_0^t G_B^{(\beta)}(t-s,\,\text{x},\,\text{y})^2\E|\sigma(u_s(\text{y}))|^2\d s\,\d \text{y} \nonumber\\
&\leq |(\mathcal{G}_B^{(\beta)} u)_t(\text{x})|^2+ \lambda^2 L_\sigma^2\int_B\int_0^t G_B^{(\beta)}(t-s,\,\text{x},\,\text{y})^2\E|u_s(\text{y})|^2\d s\,\d \text{y} \nonumber\\
&:=J_1+J_2.
\end{align}
We observe $J_1$ is bounded by the square of the same constant as initial condition, since \\$\int_B G_B^{(\beta)}(t,\,x)\,\d x \leq \int_{\R^d} G^{(\beta)}(t,\,x)\,\d x =1$ and the initial datum is assumed to be bounded above by a constant. It remains to bound $J_2$. Since $\{\varphi_{n}\}_{{n} \geq 1}$ is an orthonormal sequence, for each fixed $t > 0$ we obtain
\begin{align} \label{JNA-16}
J_2&:=\lambda^2 L_\sigma^2\int_B\int_0^t G_B^{(\beta)}(t-s,\,\text{x},\,\text{y})^2\E|u_s(\text{y})|^2\d s\,\d \text{y} \nonumber\\
&\leq a_t\lambda^2 L_\sigma^2 \int_0^t \sum_{n=1}^\infty E_\beta(-\mu_{n}(t-s)^\beta)^2 \varphi_{n}^2(\text{x})\, \d s <\infty,
\end{align}
by Lemma \ref{JNA-45b} in the case of $d<\alpha/2\beta$, where $a_t=\sup_{0<s<t}\sup_{\text{x}\in B} \E|u_s(\text{x})|^2$. In the case the eigenfunctions $\{\varphi_n\}_{n\geq1}$ are uniformly bounded by a constant $C(B)$, it follows from the above that
\begin{align} \label{JNA-16b}
   J_2 : &\leq a_t\lambda^2 L_\sigma^2 \int_0^t \sum_{n=1}^\infty E_\beta(-\mu_{n}(t-s)^\beta)^2 \varphi_{n}^2(\text{x})\, \d s \nonumber \\
   & \leq a_t\lambda^2 L_\sigma^2 \left (C(B)\right)^2 \int_0^t \sum_{n=1}^\infty E_\beta(-\mu_{n}(t-s)^\beta)^2 \d s \leq a_t\lambda^2 L_\sigma^2 \left (C(B)\right)^2 C(ML),
\end{align}
 where $a_t=\sup_{0<s<t}\sup_{\text{x}\in B} \E|u_t(\text{x})|^2$ and  $C(ML)=\displaystyle\int_0^t \sum_{n=1}^\infty E_\beta(-\mu_{n}(t-s)^\beta)^2 \d s$.  By Lemma \ref{JNA-45} $C(ML)<\infty$ is a finite constant independent of $t$. That is, since $\beta\in (\frac{1}{2}, 1)$,  we can choose $\lambda_l$ sufficiently small that for all $\lambda\leq \lambda_l$, the above estimates shows
\begin{align*}
\sup_{0<s<t}\sup_{\text{x}\in B} \E|u_t(\text{x})|^2\lesssim 1+\frac{1}{2}\sup_{0<s<t}\sup_{\text{x}\in B} \E|u_t(\text{x})|^2.
\end{align*}
This shows that $\sup_{0<t<\infty}\sup_{\text{x}\in B} \E|u_t(\text{x})|^2$ is finite and hence the conclusion of the theorem follows.
\end{proof}

\section{Proof of Theorem \ref{Thrm-4}}\label{proof4}
In this section we give the proof of Theorem \ref{Thrm-4} and the following proposition is used in its proof.  We can use the bounds of $G_B^{(\beta)}(t,\,x,\,y)$ to show the following proposition holds:
\begin{proposition} For $d < \frac{\alpha }{\beta}$, let $u_t^{(\beta)}$ be a solution to equation \eqref{JNA-9} for parameters $\beta \in \left (\frac{1}{2},\,1 \right)$.
Then for some $\theta$, the supremum on the $p^{th}$ moment of the solution $u_s^{(\beta)}(x)$ is given by
\begin{equation} \label{JNA-42}
\sup_{t>0,\,x\in B}e^{-\theta t}\E|u_s^{(\beta)}(x)|^p,
\end{equation}
is bounded above by a constant independent of $\beta$.
\end{proposition}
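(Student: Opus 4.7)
The plan is to set up a weighted $L^p$ norm on the solution and exploit the mild formulation \eqref{JNA-9} together with the Burkholder--Davis--Gundy inequality to derive a self-improving estimate; Lemma \ref{nane-tuan}(b) then furnishes the key $\beta$-independent control on the Green kernel. Introduce
\[
\cN_{p,\theta}(u^{(\beta)}):=\sup_{t>0,\,x\in B}e^{-\theta t/p}\bigl(\E|u_t^{(\beta)}(x)|^p\bigr)^{1/p},
\]
so that \eqref{JNA-42} is equivalent to proving $\cN_{p,\theta}(u^{(\beta)})$ is bounded uniformly in $\beta$. From \eqref{JNA-9}, Minkowski's inequality and the Walsh form of BDG, together with $|\sigma(x)|\le L_\sigma|x|$ and $\int_B G_B^{(\beta)}(t,x,y)\,\d y\le 1$, give
\[
\bigl(\E|u_t^{(\beta)}(x)|^p\bigr)^{1/p}\le \|u_0\|_\infty+C_p\lambda L_\sigma\biggl(\int_0^t\!\!\int_B|G_B^{(\beta)}(t-s,x,y)|^2\bigl(\E|u_s^{(\beta)}(y)|^p\bigr)^{2/p}\,\d y\,\d s\biggr)^{1/2}.
\]

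Next, I would multiply by $e^{-\theta t/p}$, substitute $\tau=t-s$, and split $e^{-\theta t/p}=e^{-\theta\tau/p}\,e^{-\theta(t-\tau)/p}$ inside the square root. Using $e^{-\theta(t-\tau)/p}(\E|u_{t-\tau}^{(\beta)}(y)|^p)^{1/p}\le\cN_{p,\theta}(u^{(\beta)})$, this factor pulls out; after extending the $\tau$-integral from $[0,t]$ to $[0,\infty)$, one arrives at
\[
e^{-\theta t/p}\bigl(\E|u_t^{(\beta)}(x)|^p\bigr)^{1/p}\le\|u_0\|_\infty+C_p\lambda L_\sigma\,K_\beta(\theta)\,\cN_{p,\theta}(u^{(\beta)}),
\]
where $K_\beta(\theta):=\bigl(\int_0^\infty\!\int_B e^{-2\theta\tau/p}|G_B^{(\beta)}(\tau,x,y)|^2\,\d y\,\d\tau\bigr)^{1/2}$. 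Restricting to a closed subinterval $\beta\in(\beta_0,\beta_1)\Subset(\tfrac12,1)$ and invoking Lemma \ref{nane-tuan}(b), one obtains a bound on $K_\beta(\theta)$ depending only on $p$ and the auxiliary parameters $\mu_0,\mu_1,\beta_0,\beta_1$ of that lemma, in particular independent of $\beta$, and with $\sup_\beta K_\beta(\theta)\to 0$ as $\theta\to\infty$. Taking the supremum in $(t,x)$ on the left and choosing $\theta=\theta(\lambda,p,L_\sigma)$ large enough that $C_p\lambda L_\sigma\sup_\beta K_\beta(\theta)\le\tfrac12$ then yields $\cN_{p,\theta}(u^{(\beta)})\le 2\|u_0\|_\infty$, which is the required uniform bound.

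A standard technical point is that this a priori step presupposes $\cN_{p,\theta}(u^{(\beta)})<\infty$; this is dealt with by running the identical estimate on the Picard iterates $u^{(\beta,n+1)}_t(x)=(\cG_B^{(\beta)}u_0)_t(x)+\lambda\int_0^t\!\int_B G_B^{(\beta)}(t-s,x,y)\sigma(u^{(\beta,n)}_s(y))\,W(\d s,\d y)$, obtaining the bound $2\|u_0\|_\infty$ uniformly in $n$ and $\beta$, and passing to the limit in $n$. The main obstacle I anticipate is the uniform applicability of Lemma \ref{nane-tuan}(b): one must fix once and for all the internal exponent (satisfying $d/\alpha<\cdot<1/(2\beta_0)$) and the auxiliary $\mu_0,\mu_1$ so that the constants in the lemma — in particular the prefactor $p^{p(1-2\lambda\beta_0)/2}$ — depend only on the window $(\beta_0,\beta_1)$ and not on $\beta$ itself. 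This forces the strengthened dimensional condition $d<\alpha/(2\beta_0)$ throughout the window, which is the natural sharp constraint for the $L^2$-in-time-and-space heat-kernel integral $K_\beta(\theta)$ to be both finite and $\beta$-uniform.
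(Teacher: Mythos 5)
Your overall scheme is the same as the paper's: take the mild formulation \eqref{JNA-9}, apply BDG and the Lipschitz bound on $\sigma$, weight by $e^{-\theta t}$, pull out the weighted supremum $b_t(\theta)$, and choose $\theta$ large so the stochastic term is absorbed with a factor $\tfrac12$. The genuine divergence — and the gap — is in the kernel estimate you use to control $K_\beta(\theta)^2=\int_0^\infty\int_B e^{-2\theta\tau/p}|G_B^{(\beta)}(\tau,x,y)|^2\,\d y\,\d\tau$. You invoke Lemma \ref{nane-tuan}(b), which rests on the eigenfunction expansion and therefore silently imports two hypotheses that are \emph{not} in the statement of the proposition: uniform boundedness of the eigenfunctions $\{\varphi_n\}$, and the stronger dimensional restriction $d<\alpha/(2\beta)$ (needed so that $d/\alpha<\lambda<1/(2\beta)$ is a nonempty range). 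The proposition only assumes $d<\alpha/\beta$, and the paper proves it at that level of generality by a different route: it dominates the Dirichlet kernel by the free-space kernel and applies Lemma \ref{Lemma-2}, which gives $\int_B G_B^{(\beta)}(\tau,x,y)^2\,\d y\le \int_{\R^d}[G^{(\beta)}_\tau(x-y)]^2\,\d y= C^*\tau^{-\beta d/\alpha}$, so that $K_\beta(\theta)^2\lesssim \Gamma(1-\tfrac{\beta d}{\alpha})\,\theta^{\frac{\beta d}{\alpha}-1}$, finite exactly when $d<\alpha/\beta$ and small for $\theta$ large (uniformly over $\beta$ in a window, via \eqref{JNA-6}). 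In particular your closing claim that $d<\alpha/(2\beta_0)$ is "the natural sharp constraint" for finiteness and $\beta$-uniformity of $K_\beta(\theta)$ is not correct; the sharp condition for that integral is $d<\alpha/\beta$, and your argument proves a strictly weaker statement than the one asserted (it fails, e.g., for $d=1$, $\alpha=1.2$, $\beta=0.7$, where $d<\alpha/\beta$ but $d>\alpha/(2\beta)$, and it needs the extra eigenfunction-bound hypothesis).

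Two mitigating remarks. First, in the only place the proposition is used (the proof of Theorem \ref{Thrm-4}), the standing hypotheses do include uniformly bounded eigenfunctions and $d<\tfrac12\min\{\tfrac1\beta,\tfrac1\gamma\}\alpha$, so your version would suffice for that application — but as a proof of the proposition as stated it is incomplete, and the fix is simply to replace Lemma \ref{nane-tuan}(b) by the comparison with the whole-space kernel plus Lemma \ref{Lemma-2}, as the paper does. Second, your treatment of the a priori finiteness of the weighted norm via Picard iterates is a sound technical point that the paper glosses over, and is a welcome addition.
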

\begin{proof} We use the mild formulation in equation \eqref{JNA-9} and follow similar computations to those used in the above proof. Now consider
\begin{equation}\label{JNA-46}
u_t^{(\beta)}(x)=
(\mathcal{G}_B^{(\beta)} u_0)_t(\text{x})+ \lambda \int_B\int_0^t G_B^{(\beta)}(t-s,\,\text{x},\,\text{y})\sigma(u_s^{(\beta)}(\text{y}))W(\d s\,,\d \text{y}),
\end{equation}
Now applying the inequality $(a+b)^p \leq 2^p(a^p\,+b^p)$ for any $a,b \geq 0$ and using Burkholder-Davis-Gundy inequality, the $p^{th}$ moment of \eqref{JNA-46} becomes
\begin{equation}\label{JNA-47}
\E|u_t^{(\beta)}(x)|^p\leq
2^p|(\mathcal{G}_B^{(\beta)} u_0)_t(\text{x})|^p+ (2\lambda)^p \left [\int_B\int_0^t |G_B^{(\beta)}(t-s,\,\text{x},\,\text{y})|^2\left [\E|\sigma(u_s^{(\beta)}(\text{y}))|^p \right]^{\frac{2}{p}}\d s\,\d \text{y} \right]^{\frac{p}{2}},
\end{equation}
\begin{align}\label{JNA-48}
e^{-\theta t}\E|u_t^{(\beta)}(x)|^p &\lesssim
e^{-\theta t}|(\mathcal{G}_B^{(\beta)} u_0)_t(\text{x})|^p+ e^{-\theta t} \left [\int_B\int_0^t |G_B^{(\beta)}(t-s,\,\text{x},\,\text{y})|^2\left [\E|\sigma(u_s^{(\beta)}(\text{y}))|^p \right]^{\frac{2}{p}} \d s\,\d \text{y} \right]^{\frac{p}{2}} \nonumber\\
&:=J_1 + J_2.
\end{align}
Using Lemma \ref{Lemma-2} and since $\sigma$ is globally Lipschitz inequality, the second term becomes
\begin{align} \label{JNA-49}
J_2 &:=e^{-\theta t} \left [\int_B\int_0^t |G_B^{(\beta)}(t-s,\,\text{x},\,\text{y})|^2\left [\E|\sigma(u_s^{(\beta)}(\text{y}))|^p \right]^{\frac{2}{p}} \d s\,\d \text{y}\right]^{\frac{p}{2}} \nonumber\\
&\leq  L_{\sigma}^p \left [\int_B\int_0^tG_B^{(\beta)}(t-s,\,\text{x},\,\text{y})^2 e^{\frac{-2\theta t}{p}}[\E|u_s^{(\beta)}(\text{y})|^p]^{2/p}\d s\,\d y \right]^{p/2} \nonumber\\
&\leq  L_{\sigma}^p b_t(\theta) \left [\int_B\int_0^tG_B^{(\beta)}(t-s,\,\text{x},\,\text{y})^2 e^{\frac{-2\theta (t-s)}{p}} \d s\,\d y \right]^{p/2}\nonumber\\
&\lesssim  L_{\sigma}^p b_t(\theta) \left[\Gamma\left (1-\frac{\beta d}{\alpha}\right)\theta ^{\frac{\beta d}{\alpha}-1}\right]^{p/2},
\end{align}
where $b_t(\theta)=\sup_{0<s<t}\sup_{\text{z}\in B}e^{-\theta s}\E|u_s^{(\beta)}(\text{z})|^p$. Now we fix $\theta > 0$ sufficiently large so that
\begin{align} \label{JNA-50}
J_2 \lesssim \frac{1}{2} b_t(\theta).
\end{align}
Thus, we have
\begin{align}\label{JNA-51}
e^{-\theta t}\E|u_t^{(\beta)}(x)|^p &\lesssim 1+\frac{1}{2} b_t(\theta).
\end{align}
Taking the supremum of the left side of \eqref{JNA-51} the result follows.
\end{proof}
We are now ready to prove  Theorem \ref{Thrm-4}.
\begin{proof}[\bf Proof of Theorem \ref{Thrm-4}]
From the mild formulation of the solutions, we have
\begin{align}\label{JNA-17}
u_t^{(\gamma)}(x)-u_t^{(\beta)}(x)&=(\mathcal{G}_B^{(\gamma)} u)_t(x)-(\mathcal{G}_B^{(\beta)}u)_t(x)+\lambda \int_B\int_0^tG_B^{(\gamma)}(t-s,\,x,\,y)\sigma(u_s^{(\gamma)}(y))W(\d s\,\d y) \nonumber\\
&-\lambda \int_B\int_0^tG_B^{(\beta)}(t-s,\,x,\,y)\sigma(u_s^{(\beta)}(y))W(\d s\,\d y)\nonumber\\
&=(\mathcal{G}_B^{(\gamma)} u)_t(x)-(\mathcal{G}_B^{(\beta)}u)_t(x) \nonumber\\
&+\lambda\int_B\int_0^t[G_B^{(\gamma)}(t-s,\,x,\,y)-G_B^{(\beta)}(t-s,\,x,\,y)]\sigma(u_s^{(\gamma)}(y))W(\d s\,\d y)\\
&+\lambda\int_B\int_0^tG_B^{(\beta)}(t-s,\,x,\,y)[\sigma(u_s^{(\gamma)}(y))-\sigma(u_s^{(\beta)}(y))]W(\d s\,\d y).\nonumber
\end{align}
Now applying the inequality $(a+b+c)^p \leq 3^p(a^p\,+b^p\,+c^p)$ for any $a,b,c \geq 0$ and using Burkholder-Davis-Gundy inequality, the $p^{th}$ moment of \eqref{JNA-17} becomes
\begin{align}\label{JNA-41}
\E|u_t^{(\gamma)}(x)-&u_t^{(\beta)}(x)|^p \leq 3^p|(\mathcal{G}_B^{(\gamma)} u)_t(x)-(\mathcal{G}_B^{(\beta)}u)_t(x)|^p \nonumber\\
&+(3\lambda)^p\left[\int_0^t\int_B|G_B^{(\gamma)}(t-s,\,x,\,y)-G_B^{(\beta)}(t-s,\,x,\,y)|^2[\E|\sigma(u_s^{(\gamma)}(y))|^p]^{2/p}\,\d y\,\d s\right]^{p/2}\\
&+(3\lambda)^p\left[\int_B \int_0^tG_B^{(\beta)}(t-s,\,x,\,y)^2[\E[\sigma(u_s^{(\gamma)}(y))-\sigma(u_s^{(\beta)}(y))]^p]^{2/p}\d s\,\d y \right]^{p/2}.\nonumber
\end{align}
For a fixed $\theta>0$ the inequality in \eqref{JNA-41} becomes
\begin{align*}
e^{-\theta t} \E|u_t^{(\gamma)}(x)-&u_t^{(\beta)}(x)|^p \lesssim e^{-\theta t} |(\mathcal{G}_B^{(\gamma)} u)_t(x)-(\mathcal{G}_B^{(\beta)}u)_t(x)|^p \nonumber\\
&+e^{-\theta t}\left[\int_0^t\int_B|G_B^{(\gamma)}(t-s,\,x,\,y)-G_B^{(\beta)}(t-s,\,x,\,y)|^2[\E|\sigma(u_s^{(\gamma)}(y))|^p]^{2/p}\,\d y\,\d s\right]^{p/2}\\
&+e^{-\theta t}\left[\int_B \int_0^tG_B^{(\beta)}(t-s,\,x,\,y)^2[\E[\sigma(u_s^{(\gamma)}(y))-\sigma(u_s^{(\beta)}(y))]^p]^{2/p}\d s\,\d y \right]^{p/2}.\\
&:=J_1+J_2+J_3.
\end{align*}
Let us first work on the third term.
Using Lemma \ref{Lemma-2} and since $\sigma$ is globally Lipschitz, we have
\begin{align} \label{JNA-20}
J_3 &:= e^{-\theta t}\left[\int_B \int_0^tG_B^{(\beta)}(t-s,\,x,\,y)^2[\E[\sigma(u_s^{(\gamma)}(y))-\sigma(u_s^{(\beta)}(y))]^p]^{2/p}\d s\,\d y \right]^{p/2} \nonumber\\
&\leq  L_{\sigma}^p \left [\int_B\int_0^tG_B^{(\beta)}(t-s,\,\text{x},\,\text{y})^2 e^{\frac{-2\theta t}{p}}[\E|u_s^{(\gamma)}(\text{y})-u_s^{(\beta)}(\text{y})|^p]^{2/p}\d s\,\d y \right]^{p/2} \nonumber\\
&\leq  L_{\sigma}^p b(\theta) \left [\int_B\int_0^tG_B^{(\beta)}(t-s,\,\text{x},\,\text{y})^2 e^{\frac{-2\theta (t-s)}{p}} \d s\,\d y \right]^{p/2}\nonumber\\
&\leq  L_{\sigma}^p b_t(\theta) \left [\int_0^\infty\int_{\R^n}G^{(\beta)}(s,\,\text{x},\,\text{y})^2 e^{\frac{-2\theta s}{p}} \d y\,\d s \right]^{p/2}\nonumber\\
 &\lesssim  L_{\sigma}^p b_t(\theta) \left [\int_0^\infty s^{-\frac{\beta d}{\alpha}} e^{\frac{-2\theta s}{p}} \,\d s \right]^{p/2}\nonumber\\
 &\lesssim  L_{\sigma}^p b_t(\theta) \left [\int_0^\infty s^{-\frac{\beta d}{\alpha}} e^{-\theta s} \,\d s \right]^{p/2}\nonumber\\
&\lesssim  L_{\sigma}^p b_t(\theta) \left[\Gamma\left (1-\frac{\beta d}{\alpha}\right)\theta ^{\frac{\beta d}{\alpha}-1}\right]^{p/2},
\end{align}
where $b_t(\theta)=\sup_{0<s<t}\sup_{\text{z}\in B}e^{-\theta s}\E|u_s^{(\gamma)}(\text{z})-u_s^{(\beta)}(\text{z})|^p$.
Now we fix $\theta > 0$ sufficiently large so that
\begin{align} \label{JNA-21}
J_3 \lesssim \frac{1}{2} b_t(\theta).
\end{align}
Next we work on $J_2$.  Since $(a + b)^2 \leq 4(a^2+b^2)$ for $a,b\geq0$, using Proposition \ref{JNA-42}, Lemma \ref{nane-tuan}(b) and the assumption on $\sigma$, we obtain
\begin{align} \label{JNA-21E}
J_2&:=e^{-\theta t}\left[\int_0^t\int_B|G_B^{(\gamma)}(t-s,\,x,\,y)-G_B^{(\beta)}(t-s,\,x,\,y)|^2[\E|\sigma(u_s^{(\beta)}(y))|^p]^{2/p}\,\d y\,\d s\right]^{p/2} \nonumber\\
&\lesssim \sup_{t>0,\,x\in B}e^{-\theta t}\E|u_s^{(\beta)}(x)|^p\left[\int_0^\infty\int_B e^{-2\theta s/p}|G_B^{(\gamma)}(s,\,x,\,y)-G_B^{(\beta)}(s,\,x,\,y)|^2\,\d s\,\d y\right]^{p/2} \nonumber\\
&\lesssim \left[\int_0^\infty\int_B e^{-2\theta s/p}|G_B^{(\gamma)}(s,\,x,\,y)-G_B^{(\beta)}(s,\,x,\,y)|^2\,\d s\,\d y\right]^{p/2} \nonumber \\
&\lesssim \left[\int_0^\infty\int_B e^{-2\theta s/p}|G_B^{(\gamma)}(s,\,x,\,y)|^2\d s\,\d y+\int_0^\infty\int_B e^{-2\theta s/p} |G_B^{(\beta)}(s,\,x,\,y)|^2\,\d s\,\d y\right]^{p/2} \nonumber \\
 &\lesssim \left[ p^{1-2\lambda^{'} \beta_0^{'}}\left [ \frac{\mathscr C_{\mu_0^{'}}^2 \theta^{-2\mu_0^{'}}}{1-2\lambda^{'} -2\mu_0}  +  \frac{\mathscr C_{\mu_1^{'}}^2 \theta^{-2\mu_1^{'}}}{2\lambda^{'} \beta_0^{'} +2\mu_1^{'}-1} \right] + p^{1-2\lambda \beta_0}\left [\frac{\mathscr C_{\mu_0}^2 \theta^{-2\mu_0}}{1-2\lambda - 2\mu_0} +  \frac{\mathscr C_{\mu_1}^2 \theta^{-2\mu_1}}{2\lambda \beta_0 +2\mu_1-1} \right ]  \right ]^{p/2},
\end{align}
where $\beta\in (\beta_0, \beta_1)$ and $\gamma\in (\beta_0^{'}, \beta_1^{'})$ for some $\beta_0, \beta_1, \beta_0^{'}, \beta_1^{'} \in (0,1)$,  $\frac{d}{\alpha}<\lambda<\frac{1}{2\beta}$, $\frac{d}{\alpha}<\lambda ^{'}<\frac{1}{2\gamma}$, \\ $\mu_0 < \min\Big(  \frac{1}{2}-\lambda,~~\frac{1-2\beta_1 \lambda}{2}\Big)$, $\mu_0^{'} < \min\Big( \frac{1}{2} -\lambda^{'},~~\frac{1-2\gamma_1 \lambda^{'}}{2}\Big)$, $\mu_1> \frac{1- 2 \beta_0 \lambda}{2}$,    and $\mu_1^{'}> \frac{1- 2 \beta_0^{'} \lambda^{'}}{2}$, where the bound on right hand side of \eqref{JNA-21E} doesn't depend on $\beta$ and $\gamma$.\\
We combine the above estimates and obtain the following:
\begin{align*}
\sup_{x\in B}e^{-\theta s} \E|u_s^{(\gamma)}(x)-u_s^{(\beta)}(x)|^p&\lesssim  e^{-\theta t} |(\mathcal{G}_B^{(\gamma)} u)_t(x)-(\mathcal{G}_B^{(\beta)}u)_t(x)|^p + \frac{1}{2} b_t(\theta)\\
&+ \left[\int_0^\infty\int_B e^{-2\theta s/p}|G_B^{(\gamma)}(s,\,x,\,y)-G_B^{(\beta)}(s,\,x,\,y)|^2\,\d s\,\d y\right]^{p/2}.
\end{align*}
Since Lemma \ref{JNA-27}, and Lemma \ref{nane-tuan}(b) allows us to use the dominated convergence theorem,  taking $\gamma\rightarrow \beta$ the result of the theorem follows.
\end{proof}

\section{Proof of Theorem \ref{Thrm-5}, Theorem \ref{Thrm-6}, and Theorem \ref{Thrm-7}} \label{proof3}

\begin{proof}[\bf Proof of Theorem \ref{Thrm-5}]

First, we establish the second statement of the theorem. From the mild formulation given in \eqref{JNA-9} and using Stochastic Fubini theorem, we set
\begin{align} \label{JNA-c10}
\langle u_t, \varphi_{1}\rangle
&=E_\beta(-\mu_{1} t^\beta)\langle u_0, \varphi_{1}\rangle\, +\lambda \int_B   \int_0^t E_\beta(-\mu_{1} (t-s)^\beta)\varphi_{1}(\text{y})\sigma(v_s(\text{y}))F(\d s\,,\d \text{y}).
\end{align}
Taking the second moment of \eqref{JNA-c10}, we get
\begin{align} \label{JNA-c11}
\E\langle u_t, \varphi_{1}\rangle^2&=E_\beta(-\lambda_1 t^\beta)^2 \langle u_0, \varphi_{1}\rangle^2\nonumber\\ &~~~~~~+\lambda^2\int_{B \times B}\int_0^t E_\beta(-\lambda_1(t-s)^\beta)^2\varphi_{1}(\text{y})\varphi_{1}(\text{z})\E|\sigma(u_s(\text{y}))\sigma(u_s(\text{z}))|\,f(y,\,z)\,\d s\d \text{y}\d \text{z}.
\end{align}
Now using \eqref{JNA-c11} and the assumption on $\sigma$ gives
\begin{align}\label{JNA-c12}
\int_0^\infty e^{-\theta t}\E\langle u_t, \varphi_{1}\rangle^2\,\d t
&= \Lambda (\theta) \,\langle u_0, \varphi_{1}\rangle^2 +\lambda^2 \Lambda (\theta) \,\int_0^\infty e^{-\theta t}\int_{B \times B} \varphi_{1}(\text{y})\varphi_{1}(\text{z}) \times \nonumber \\ &~~~~~~~~~~~~~~~~~~~~~~~~~~~~~~~~~~\E|\sigma(u_t(\text{y}))\sigma(u_t(\text{z}))| f(y,\,z) \d \text{y} \,\text{z}\,\d t. \nonumber \\
&\geq \Lambda (\theta) \,\langle u_0, \varphi_{1}\rangle^2 +\lambda^2l_\sigma^2 K_f \Lambda (\theta) \,\int_0^\infty e^{-\theta t}\int_{B \times B} \varphi_{1}(\text{y})\varphi_{1}(\text{z})\times \nonumber \\ &~~~~~~~~~~~~~~~~~~~~~~~~~~~~~~~~~~\E|u_t(\text{y})u_t(\text{z})| \d \text{y} \, \d \text{z}\,\d t \nonumber \\
&=  \Lambda(\theta) \langle u_0, \varphi_{1}\rangle^2+\lambda^2l_\sigma^2 K_f \Lambda(\theta)\int_0^\infty e^{-\theta t}\E\langle u_t, \varphi_{1}\rangle^2\,\d t.
\end{align}
If $\beta\in(0,\,\frac{1}{2}]$, since $\Lambda(\theta)$ tends to infinity as $\theta$ goes to zero, we can choose $\theta$ small enough and from \eqref{JNA-c12} we obtain
\begin{align}\label{JNA-c12b}
\int_0^\infty e^{-\theta t}\E\langle u_t, \varphi_{1}\rangle^2\,\d t &\gtrsim  \Lambda(\theta) \langle u_0, \varphi_{1}\rangle^2+\frac{1}{2}\int_0^\infty e^{-\theta t}\E\langle u_t, \varphi_{1}\rangle^2\,\d t.
\end{align}
This implies that for sufficiently small $\theta$, we have
\begin{equation*}
\int_0^\infty e^{-\theta t}\E\langle u_t, \varphi_{1}\rangle^2\,\d t=\infty,
\end{equation*}
which also implies that for sufficiently large $t$, $\E\langle u_t, \varphi_{1}\rangle^2$ grows exponentially. Now using Cauchy-Schwarz inequality we get the following,
\begin{align*}
\E\langle u_t, \varphi_{1}\rangle^2&=\E \left |\int_B u_t(x) \,\varphi_1(x)\,\d x \right|^2\\
&\leq \E \left ( \left(\int_B |u_t(x)|^2 \,\d x \right) \left (\int_B \varphi_1^2(x)\,\d x \right)\right)\\
&\lesssim  \sup_{\text{x}\in B}\E|u_t(\text{x})|^2.
\end{align*}
We can thus conclude that $\sup_{\text{x}\in B}\E|u_t(\text{x})|^2$ too grows exponentially fast for all values of $\lambda$. \\
Note that the first part of the conclusion of theorem merely follows from the fact that the second term of \eqref{JNA-c12} is positive and that the first term cannot have exponential decay.
\end{proof}

\begin{proof}[\bf Proof of  Theorem \ref{Thrm-6}]
Using the same notations as in the proof of Theorem \ref{Thrm-5}, we observe that the inequality \eqref{JNA-c12} holds;
\begin{align}\label{JNA-c13}
\int_0^\infty e^{-\theta t}\E\langle u_t, \varphi_{1}\rangle^2\,\d t&\gtrsim  \Lambda(\theta) \langle u_0, \varphi_{1}\rangle^2+\lambda^2l_\sigma^2K_f\Lambda(\theta)\int_0^\infty e^{-\theta t}\E\langle u_t,\,\varphi_{1}\rangle^2\,\d t.
\end{align}
Since $2\beta>1$, the function $\Lambda(\theta)$ is bounded. Thus, for any fixed $\theta>0$ there exists sufficiently large $\lambda_u$ so that for all $\lambda\geq \lambda_u$, from the inequality \eqref{JNA-c13} we obtain
\begin{align} \label{JNA-c14}
\int_0^\infty e^{-\theta t}\E\langle u_t, \varphi_{1}\rangle^2\,\d t&\gtrsim  \Lambda(\theta) \langle u_0, \varphi_{1}\rangle^2+2\int_0^\infty e^{-\theta t}\E\langle u_t, \varphi_{1}\rangle^2\,\d t.
\end{align}
Again, following similar lines of arguments as in the proof of Theorem \ref{Thrm-5}, we get result of the current theorem.
\end{proof}

\begin{proof}[\bf Proof of  Theorem \ref{Thrm-7}]
Using $(a+b)^2 \leq 4(a^2+b^2)$ for $a,b \geq 0$, and applying Burkholder's inequality, the second moment of the mild formulation given by \eqref{JNA-9-b} becomes:
\begin{align} \label{JNA-c15}
\E|u_t(x)|^2&\leq
4|(\mathcal{G}_B^{(\beta)} u)_t(\text{x})|^2+ 4\lambda^2 \int_{B\times B} \int_0^t G_B^{(\beta)}(t-s,\,\text{x},\,\text{y})G_B^{(\beta)}(t-s,\,\text{x},\,\text{z}) \times \nonumber ~
\\ & ~~~~~~~~~~~~~~~~~~~~~~~~~~~~~~\E|\sigma(u_s(\text{y}))\sigma(u_s(\text{z}))|\,f(y,\, z)\,\d s\,\d \text{y} \,\d \text{z}\nonumber\\
&\leq 4|(\mathcal{G}_B^{(\beta)} u)_t(\text{x})|^2+ 4\lambda^2 L_\sigma^2\int_{B\times B}\int_0^t G_B^{(\beta)}(t-s,\,\text{x},\,\text{y})G_B^{(\beta)}(t-s,\,\text{x},\,\text{z})\times \nonumber ~
\\ & ~~~~~~~~~~~~~~~~~~~~~~~~~~~~~~\E|u_s(\text{y})u_s(\text{z})| \, f(y,\,z)\,\d s\,\d \text{y} \, \d \text{z}\nonumber\\
&:=J_1+J_2.
\end{align}
We observe $J_1$ is bounded by the square of the same constant as initial condition, since $\int_B G_B^{(\beta)}(t,\,x)\,\d x \leq \int_{\R^d} G^{(\beta)}(t,\,x)\,\d x =1$ and the initial datum is assumed to be bounded above by a constant. It remains to bound $J_2$. Now if $\{\varphi_n\}_{n\geq1}$ are uniformly bounded by a constant $C(B)$,for each fixed $t > 0$, by Cauchy-Schwarz inequality, Lemma \ref{Lema-1} and Lemma \ref{JNA-bd1} we obtain
\begin{align} \label{JNA-c16}
J_2&:=4\lambda^2 L_\sigma^2\int_{B\times B}\int_0^t G_B^{(\beta)}(t-s,\,\text{x},\,\text{y})G_B^{(\beta)}(t-s,\,\text{x},\,\text{z})\E|u_s(\text{y})u_s(\text{z})| \, f(y,\,z)\,\d s\,\d \text{z} \, \d \text{y}\nonumber\\
&\leq 4\lambda^2 L_\sigma^2\int_{B\times B}\int_0^t G_B^{(\beta)}(t-s,\,\text{x},\,\text{y})G_B^{(\beta)}(t-s,\,\text{x},\,\text{z})(\E|u_s(\text{y})|^2)^{\frac{1}{2}}(\E|u_s(\text{z})|^2)^{\frac{1}{2}} \, f(y,\,z)\,\d s\,\d \text{z} \, \d \text{y}\nonumber\\
&\leq 4\lambda^2 L_\sigma^2\int_0^t \sup_{x \in B} \E[|u_s(\text{x})|^2] \int_{B\times B} G_B^{(\beta)}(t-s,\,\text{x},\,\text{y})G_B^{(\beta)}(t-s,\,\text{x},\,\text{z}) \, f(y,\,z)\,\d s\,\d \text{z} \, \d \text{y}\nonumber\\
&\leq 4\lambda^2 L_\sigma^2 a_t\int_0^t \sum_{n=1}^\infty \sum_{k=1}^\infty E_\beta(-\mu_{n}(t-s)^\beta) E_\beta(-\mu_{k}(t-s)^\beta) \varphi_{n}(\text{x})\varphi_{k}(\text{x}) \times \nonumber\\
&~~~~~~~~~~~~~~~~~~~~\int_{B\times B}\varphi_{n}(\text{y})\varphi_{k}(\text{z}) \, f(y,\,z)\,\d s\,\d \text{z} \, \d \text{y}\nonumber\\
&\leq 4\lambda^2 L_\sigma^2 a_t \sum_{n=1}^\infty \sum_{k=1}^\infty \left (\int_0^t E_\beta(-\mu_{n}(t-s)^\beta)^2 \, \d s \right )^{\frac{1}{2}} \left (\int_0^t E_\beta(-\mu_{k}(t-s)^\beta)^2 \, \d s \right )^{\frac{1}{2}} \varphi_{n}(\text{x})\varphi_{k}(\text{x}) \,\times \nonumber\\
&~~~~~~~~~~~~~~~~~~~~\int_{B}\varphi_{n}(\text{y})\left [\int_B \varphi_{k}(\text{z}) \, f(y,\,z) \,\d \text{z} \right ] \, \d \text{y} \nonumber\\
    &\lesssim C_3 C(B)^4 \lambda^2 L_\sigma^2 a_t \sum_{n=1}^\infty \sum_{k=1}^\infty \left (\mu_n^{-\frac{1}{2\beta}} \right ) \left (\mu_k^{-\frac{1}{2\beta}} \right ) \,\int_{B}\left [\int_B f(y,\,z) \,\d \text{z} \right ] \, \d \text{y} \nonumber\\
    &\lesssim C_3\lambda^2 L_\sigma^2 a_t \left (\sum_{n=1}^\infty n^{-\frac{\alpha}{2d\beta}} \right )  \left (\sum_{k=1}^\infty  k^{-\frac{\alpha}{2d\beta}} \right ) \,\int_{B \times B} f(y,\,z) \,\d \text{z} \,\d \text{y} \nonumber \\
    &\lesssim C \lambda^2 L_\sigma^2 a_t,
\end{align}
where $C=C_3\left (\sum_{n=1}^\infty n^{-\frac{\alpha}{2d\beta}} \right )  \left (\sum_{k=1}^\infty  k^{-\frac{\alpha}{2d\beta}} \right ) \,\int_{B \times B} f(y,\,z) \,\d \text{z} \,\d \text{y} <\infty$,  and~\\ $a_t = \sup_{0<s<t} \sup_{x \in B} \E[|u_s(\text{x})|^2$.
Since $\beta\in (\frac{1}{2}, 1)$,  we can choose $\lambda_l$ sufficiently small that for all $\lambda\leq \lambda_l$, the above estimates shows
\begin{align*}
\sup_{0<s<t}\sup_{\text{x}\in B} \E|u_t(\text{x})|^2\lesssim 1+\frac{1}{2}\sup_{0<s<t}\sup_{\text{x}\in B} \E|u_t(\text{x})|^2.
\end{align*}
This shows that $\sup_{0<t<\infty}\sup_{\text{x}\in B} \E|u_t(\text{x})|^2$ is finite and hence the conclusion of the theorem follows.
\end{proof}

\begin{proof} [\bf Proof of  Corollary \ref{Thrm-8}]
Since $\frac{1}{\beta}>1$, if $\alpha = 2$ and $d=1$, (for the case of the eigenvalues $\mu_n=(\frac{n\pi}{L})^2$ and eigenvectors $\varphi_n(x)=(\frac{2}{L})^{\frac{1}{2}}\sin{(\frac{n\pi x}{L})}$ of Dirichlet Laplacian corresponding to a Brownian Motion killed upon
exiting the domain $[0, \,L]$ so that $|\varphi_n(x)| \leq (\frac{2}{L})^{\frac{1}{2}}$ for each $n$) from \eqref{JNA-16} using Lemma \ref{Lema-1} we obtain the result of the corollary.
\end{proof}
\begin{proof} [\bf Proof of  Corollary \ref{Thrm-9}]
In this case, we note that $ C_4\,\int_{B}\left [\int_B f(y,\,z) \,\d \text{z} \right ] \, \d \text{y} < \infty$, since
\begin{align}
    \int_{B\times B} f(y,\,z) \,\d \text{z} \, \d \text{y}
     &=2 \frac{L^{2-\gamma}}{(1-\gamma)\,(2-\gamma)} < \infty.
\end{align}
\end{proof}
\section{Proof of Theorem \ref{Thrm-10}}\label{proof5}
In this section we give the proof of Theorem \ref{Thrm-10} and the following result is useful to guarantee  the use of dominated convergence theorem in the proof of Theorem \ref{Thrm-10}.
\begin{proposition} \label{nane-tuan-2022-b}
Assume that $\{\varphi_n\}_{n\geq1}$ are uniformly bounded by a constant $C(B)$, depending on the geometrical characteristics of the domain $B$, i.e.,
\begin{equation*}
    C(B) = \sup_{n\geq 1,\text{x} \in B} \varphi_n(\text{x}).
\end{equation*}
Suppose that $\int_{B \times B} f(x,\,y)\, \d x \,\d y <\infty$. For $\beta\in (\frac{1}{2},1)$ and   $\frac{d}{\alpha}<\lambda<\frac{1}{2\beta}$,
let $0< \mu_0 < \min\Big(\frac{1}{2} - \lambda,~~\frac{1-2\beta_1 \lambda}{2} \Big)<\min\Big(\frac{1}{2} - \lambda,~~\frac{1-2\beta \lambda}{2} \Big)$ and $\mu_1> \frac{1- 2 \beta_0 \lambda}{2}$ where $\beta\in (\beta_0,\, \beta_1)$ for some $\beta_0, \beta_1 \in (0,\,1)$. Also for $\gamma\in (\frac{1}{2},1)$ and   $\frac{d}{\alpha}<\lambda^{'}<\frac{1}{2\gamma}$, let $0< \mu_0 ^{'}< \min\Big(\frac{1}{2} - \lambda^{'},~~\frac{1-2\beta_1^{'} \lambda^{'}}{2} \Big)<\min\Big(\frac{1}{2} - \lambda^{'},~~\frac{1-2\gamma \lambda^{'}}{2} \Big)$,  and $\mu_1^{'}> \frac{1- 2 \beta_0^{'} \lambda^{'}}{2}$  where $\gamma\in (\beta_0^{'}, \,\beta_1^{'})$ for some $\beta_0^{'}, \beta_1^{'} \in (0,\,1)$. Then for $d<\frac{1}{2} \min\{\frac{1}{\beta},\,\frac{1}{\gamma}\}\alpha$, there exist positive a constant $C$ that depends on $\mu_0, \,\mu_0^{'}, \,\mu_1,\, \mu_1^{'},$ and $p$, and  independent of $\beta$ and $\gamma$ such that
\begin{align} \label{a13}
\bigg[ \int_0^t  &\int_{B\times B} e^{-2\theta (t-s)/p}|G_B ^{(\gamma)}(t-s,\,x,\,y)-G_B^{(\beta)}(t-s,\,x,\,y)||G_B^{(\gamma)}(t-s,\,x,\,z)-G_B^{(\beta)}(t-s,\,x,\,z)|\times \nonumber \\ & f(y,\,z)\,\d y \, \d z \,\d s \,\bigg]^\frac{p}{2}
\lesssim C \left [p^{1-2\lambda^{'} \beta_0^{'}}\left (\theta^{-2\mu_0^{'}} +  \theta^{-2\mu_1^{'}} \right ) +  p^{1-2\lambda \beta_0}\left ( \theta^{-2\mu_0}  +  \theta^{-2\mu_1} \right) \right]^{p/2}.
\end{align}
\end{proposition}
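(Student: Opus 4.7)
The plan is to reduce the double spatial integral of kernel differences to a scalar time integral handled by Lemma \ref{Integral-bound}. The key enabling observation is that Lemma \ref{nane-tuan}(a) furnishes a bound on $|G_B^{(\beta)}(s,x,y)|$ that is \emph{uniform in the spatial variables $x,y$}, so the colored-noise correlation $f$ only contributes the harmless factor $\int_{B\times B} f(y,z)\,\d y\,\d z < \infty$ supplied by hypothesis.

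First I would apply the triangle inequality $|G_B^{(\gamma)} - G_B^{(\beta)}| \leq |G_B^{(\gamma)}| + |G_B^{(\beta)}|$ and invoke Lemma \ref{nane-tuan}(a), once with $\lambda \in (d/\alpha,\, 1/(2\beta))$ and once with $\lambda' \in (d/\alpha,\, 1/(2\gamma))$ (both intervals are nonempty precisely by the standing hypothesis $d<\tfrac{1}{2}\min\{1/\beta,\,1/\gamma\}\alpha$), to obtain the bound
\[
|G_B^{(\gamma)}(s,x,y) - G_B^{(\beta)}(s,x,y)| \;\lesssim\; s^{-\gamma\lambda'} + s^{-\beta\lambda},
\]
uniformly in $y \in B$. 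The same estimate holds with $y$ replaced by $z$. Multiplying the two and using $(a+b)^2 \leq 2(a^2+b^2)$ therefore gives a pointwise-in-$(y,z)$ upper bound of the form $C\bigl(s^{-2\gamma\lambda'} + s^{-2\beta\lambda}\bigr)$ that is independent of $x, y, z$.

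Since this bound is independent of $y, z$, it pulls out of the $B\times B$ integral against $f$, whose total mass $\int_{B\times B} f(y,z)\,\d y\,\d z$ is finite by hypothesis. Substituting $r = t-s$ and extending the upper limit to $\infty$ reduces the surviving time integral to the sum
\[
\int_0^\infty e^{-2\theta r/p}\, r^{-2\gamma\lambda'}\,\d r \;+\; \int_0^\infty e^{-2\theta r/p}\, r^{-2\beta\lambda}\,\d r,
\]
each term of which falls under Lemma \ref{Integral-bound}: the first with parameters $(\lambda',\beta_0',\mu_0',\mu_1')$ and the second with $(\lambda,\beta_0,\mu_0,\mu_1)$. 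Summing the two resulting bounds produces exactly the expression $p^{1-2\lambda'\beta_0'}(\theta^{-2\mu_0'}+\theta^{-2\mu_1'}) + p^{1-2\lambda\beta_0}(\theta^{-2\mu_0}+\theta^{-2\mu_1})$ inside the brackets of \eqref{a13}, and raising to the power $p/2$ finishes the estimate.

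Essentially every step is forced by an earlier lemma, so no real obstacle arises; the only item requiring care is bookkeeping of the two parameter families $(\lambda,\beta_0,\beta_1,\mu_0,\mu_1)$ and their primed counterparts, so that the convergence conditions $0<\mu_0<\min(\tfrac{1}{2}-\lambda,\,\tfrac{1-2\beta_1\lambda}{2})$ and $\mu_1>\tfrac{1-2\beta_0\lambda}{2}$ (and primed analogs) assumed in the statement line up with the hypotheses required when Lemma \ref{Integral-bound} is invoked on each summand. The structural simplification that makes the argument short is that Lemma \ref{nane-tuan}(a) already decouples the space dependence entirely, so no Cauchy--Schwarz or eigenfunction expansion in the $y,z$ integration is needed.
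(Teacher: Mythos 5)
Your proposal is correct and follows essentially the same route as the paper's proof: bound each kernel difference via the triangle inequality and Lemma \ref{nane-tuan}(a) by $s^{-\gamma\lambda'}+s^{-\beta\lambda}$ uniformly in space, factor out $\int_{B\times B} f(y,z)\,\d y\,\d z<\infty$, use $(a+b)^2\leq 2(a^2+b^2)$, extend the time integral to $\infty$, and apply Lemma \ref{Integral-bound} separately with the primed and unprimed parameter families before raising to the power $p/2$. No substantive differences from the paper's argument.
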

\begin{proof} Since $\{\varphi_n\}_{n\geq1}$ are uniformly bounded by a constant $C(B)$ and $\int_{B \times B} f(x,\,y)\, \d x \,\d y <\infty$, for each $t>0$, using Lemma \ref{Integral-bound}, Lemma \ref{nane-tuan}(a)  and the fact that $(a + b)^2 \leq 2(a^2+b^2)$ for $a,b\geq0$, we bound the integral term as follows:
\begin{align} \label{G5}
   \int_0^t  &\int_{B\times B} e^{-2\theta (t- s)/p}|G_B ^{(\gamma)}(t-s,\,x,\,y)-G_B^{(\beta)}(t-s,\,x,\,y)||G_B^{(\gamma)}(t-s,\,x,\,z)-G_B^{(\beta)}(t-s,\,x,\,z)| \times \nonumber \\&~~~~~~~~~~~~~~~~ f(y,\,z)\,\d y \, \d z \,\d s \nonumber\\
&\lesssim \int_0^t  \int_{B\times B} e^{-2\theta (t-s)/p} [(t-s)^{-2\gamma \lambda^{'}} +2(t-s)^{-(\beta \lambda +\gamma \lambda^{'})}+(t-s)^{-2\beta \lambda}]f(y,\,z)\,\d y \, \d z \,\d s \nonumber\\
&\lesssim \int_0^\infty  \int_{B\times B} e^{-2\theta s/p} [s^{-\gamma \lambda^{'}}+s^{-\beta \lambda}]^2f(y,\,z)\,\d y \, \d z \,\d s \nonumber\\
&\lesssim \int_0^\infty   e^{-2\theta s/p} [s^{-2\gamma \lambda^{'}}+s^{-2\beta \lambda}] \,\d s \nonumber\\
&\lesssim  p^{1-2\lambda^{'} \beta_0^{'}}\left [\frac{\mathscr C_{\mu_0^{'}}^2 \theta^{-2\mu_0^{'}}}{1-2\lambda^{'} - 2\mu_0^{'}} +  \frac{\mathscr C_{\mu_1^{'}}^2 \theta^{-2\mu_1^{'}}}{2\lambda^{'} \beta_0^{'} +2\mu_1^{'}-1} \right ] +  p^{1-2\lambda \beta_0}\left [ \frac{\mathscr C_{\mu_0}^2 \theta^{-2\mu_0}}{1-2\lambda -2\mu_0}  +  \frac{\mathscr C_{\mu_1}^2 \theta^{-2\mu_1}}{2\lambda \beta_0 +2\mu_1-1} \right],
\end{align}
where $\mu_0$ and $\mu_0^{'}$ are as given in the hypothesis of the proposition. It  follows from \eqref{G5} that for any $p \ge 0$ we obtain
\begin{align*}
\bigg[ \int_0^t  &\int_{B\times B} e^{-2\theta (t- s)/p}|G_B ^{(\gamma)}(t-s,\,x,\,y)-G_B^{(\beta)}(t-s,\,x,\,y)||G_B^{(\gamma)}(t-s,\,x,\,z)-G_B^{(\beta)}(t-s,\,x,\,z)|\times \nonumber \\& f(y,\,z)\,\d y \, \d z \,\d s \bigg]^{p/2} \\ &\lesssim \left [p^{1-2\lambda^{'} \beta_0^{'}}\left [\frac{\mathscr C_{\mu_0^{'}}^2 \theta^{-2\mu_0^{'}}}{1-2\lambda^{'}  - 2\mu_0^{'}} +  \frac{\mathscr C_{\mu_1^{'}}^2 \theta^{-2\mu_1^{'}}}{2\lambda^{'} \beta_0^{'} +2\mu_1^{'}-1} \right ] +  p^{1-2\lambda \beta_0}\left [ \frac{\mathscr C_{\mu_0}^2 \theta^{-2\mu_0}}{1-2\lambda -2\mu_0}  +  \frac{\mathscr C_{\mu_1}^2 \theta^{-2\mu_1}}{2\lambda \beta_0 +2\mu_1-1} \right] \right]^{p/2},
\end{align*}
which allows us to deduce \eqref{a13}.
That is, each terms in the summation can be bounded by a quantity independent of $\gamma$ and $\beta$.\\
\end{proof}

\begin{proof}[\bf Proof of Theorem \ref{Thrm-10}]
From the mild formulation of the solutions, we have
\begin{align}\label{JNA-c17}
u_t^{(\gamma)}(x)-u_t^{(\beta)}(x)&=(\mathcal{G}_B^{(\gamma)} u)_t(x)-(\mathcal{G}_B^{(\beta)}u)_t(x)+\lambda \int_B\int_0^tG_B^{(\gamma)}(t-s,\,x,\,y)\sigma(u_s^{(\gamma)}(y))F(\d s\,\d y) \nonumber\\
&-\lambda \int_B\int_0^tG_B^{(\beta)}(t-s,\,x,\,y)\sigma(u_s^{(\beta)}(y))F(\d s\,\d y)\nonumber\\
&=(\mathcal{G}_B^{(\gamma)} u)_t(x)-(\mathcal{G}_B^{(\beta)}u)_t(x) \nonumber\\
&+\lambda\int_B\int_0^t[G_B^{(\gamma)}(t-s,\,x,\,y)-G_B^{(\beta)}(t-s,\,x,\,y)]\sigma(u_s^{(\gamma)}(y))F(\d s\,\d y)\\
&+\lambda\int_B\int_0^tG_B^{(\beta)}(t-s,\,x,\,y)[\sigma(u_s^{(\gamma)}(y))-\sigma(u_s^{(\beta)}(y))]F(\d s\,\d y).\nonumber
\end{align}
Now applying the inequality $(a+b+c)^p \leq 3^p(a^p\,+b^p\,+c^p)$ for any $a,b,c \geq 0$ and using Burkholder-Davis-Gundy inequality, the $p^{th}$ moment of \eqref{JNA-c17} becomes
\begin{align}\label{JNA-c41}
\E|u_t^{(\gamma)}(x)-&u_t^{(\beta)}(x)|^p \leq 3^p|(\mathcal{G}_B^{(\gamma)} u)_t(x)-(\mathcal{G}_B^{(\beta)}u)_t(x)|^p \nonumber\\
&+(3\lambda)^p \E\left|\int_B\int_0^t[G_B^{(\gamma)}(t-s,\,x,\,y)-G_B^{(\beta)}(t-s,\,x,\,y)]\sigma(u_s^{(\gamma)}(y))F(\d s\,\d y) \right |^p\\
&+(3\lambda)^p \E\left |\int_B\int_0^tG_B^{(\beta)}(t-s,\,x,\,y)[\sigma(u_s^{(\gamma)}(y))-\sigma(u_s^{(\beta)}(y))]F(\d s\,\d y) \right|^p.\nonumber
\end{align}
\begin{align}\label{JNA-c41-b}
\E|u_t^{(\gamma)}(x)-&u_t^{(\beta)}(x)|^p \leq 3^p|(\mathcal{G}_B^{(\gamma)} u)_t(x)-(\mathcal{G}_B^{(\beta)}u)_t(x)|^p \nonumber\\
&+(3\lambda)^p\bigg[\int_0^t\int_{B\times B}|G_B^{(\gamma)}(t-s,\,x,\,y)-G_B^{(\beta)}(t-s,\,x,\,y)||G_B^{(\gamma)}(t-s,\,x,\,z)-G_B^{(\beta)}(t-s,\,x,\,z)|\times \nonumber\\
&~~~~~~~~~~[\E|\sigma(u_s^{(\gamma)}(y))\sigma(u_s^{(\gamma)}(z))|^{p/2}]^{2/p}  \,f(y,\,z)\,\d s\, \d y\,\d z \bigg ]^{p/2}\\
&+(3\lambda)^p\bigg[\int_{B \times B} \int_0^tG_B^{(\beta)}(t-s,\,x,\,y)G_B^{(\beta)}(t-s,\,x,\,z)  \times \nonumber\\
& ~~~~~~~~~~[\E[(\sigma(u_s^{(\gamma)}(y))-\sigma(u_s^{(\beta)}(y)))(\sigma(u_s^{(\gamma)}(z))-\sigma(u_s^{(\beta)}(z)))]^{p/2}]^{2/p} \,f(y,\,z)\,\d s\,\d y \,\d z \bigg]^{p/2}.\nonumber\\
\end{align}
For a fixed $\theta>0$ the inequality in \eqref{JNA-c41} becomes
\begin{align}\label{JNA-c41a}
 e^{-\theta t} &\E|u_t^{(\gamma)}(x)-u_t^{(\beta)}(x)|^p \lesssim  e^{-\theta t} |(\mathcal{G}_B^{(\gamma)} u)_t(x)-(\mathcal{G}_B^{(\beta)}u)_t(x)|^p \nonumber\\
&+ e^{-\theta t} \bigg[\int_0^t\int_{B \times B}|G_B^{(\gamma)}(t-s,\,x,\,y)-G_B^{(\beta)}(t-s,\,x,\,y)||G_B^{(\gamma)}(t-s,\,x,\,z)-G_B^{(\beta)}(t-s,\,x,\,z)|\times \nonumber\\
&~~~~~~~~~~[\E|\sigma(u_s^{(\gamma)}(y))\sigma(u_s^{(\gamma)}(z))|^{p/2}]^{2/p} \,f(y,\,z)\,\d s\, \d y\,\d z \bigg ]^{p/2} \nonumber\\
&+ e^{-\theta t} \bigg[\int_{B \times B} \int_0^tG_B^{(\beta)}(t-s,\,x,\,y)G_B^{(\beta)}(t-s,\,x,\,z)  \times \nonumber\\
& ~~~~~~~~~~[\E[(\sigma(u_s^{(\gamma)}(y))-\sigma(u_s^{(\beta)}(y)))(\sigma(u_s^{(\gamma)}(z))-\sigma(u_s^{(\gamma)}(z)))]^{p/2}]^{2/p} \,f(y,\,z)\,\d s\,\d y \,\d z \bigg]^{p/2}.\nonumber\\
&:=J_1+J_2+J_3.
\end{align}
Let us first work on the third term.
Since $\sigma$ is globally Lipschitz, using Lemma \ref{JNA-45}, and Lemma  \ref{nane-tuan} (a), we have

\begin{align} \label{JNA-c20-a}
J_3 &:= e^{-\theta t} \bigg[\int_{B \times B} \int_0^t G_B^{(\beta)}(t-s,\,x,\,y)G_B^{(\beta)}(t-s,\,x,\,z)  \times  \nonumber\\
& ~~~~~~~~~~\bigg[\E[(\sigma(u_s^{(\gamma)}(y))-\sigma(u_s^{(\beta)}(y)))(\sigma(u_s^{(\gamma)}(z))-\sigma(u_s^{(\gamma)}(z)))]^{p/2}]^{2/p} \,f(y,\,z)\,\d s\,\d y \,\d z \bigg]^{p/2}.\nonumber\\
&\lesssim L_{\sigma}^p  \bigg[\int_{B \times B} \int_0^t G_B^{(\beta)}(t-s,\,x,\,y)G_B^{(\beta)}(t-s,\,x,\,z)  \times \nonumber\\
& ~~~~~~~~~~e^{\frac{-2\theta t}{p}}[\E[(u_s^{(\gamma)}(y)-u_s^{(\beta)}(y))(u_s^{(\gamma)}(z)-u_s^{(\beta)}(z))]^{p/2}]^{2/p}  \,f(y,\,z)\,\d s\,\d y \,\d z \bigg]^{p/2}.\nonumber\\
&\lesssim L_{\sigma}^p \bigg[\int_{B \times B} \int_0^t G_B^{(\beta)}(t-s,\,x,\,y)G_B^{(\beta)}(t-s,\,x,\,z)  \times \nonumber\\
& ~~~~~~~~~~e^{\frac{-2\theta t}{p}}(\E(u_s^{(\gamma)}(y)-u_s^{(\beta)}(y))^{p})^{\frac{1}{p}} (\E(u_s^{(\gamma)}(z)-u_s^{(\beta)}(z))^{p})^{\frac{1}{p}} \,f(y,\,z)\,\d s\,\d y \,\d z \bigg]^{p/2}.\nonumber\\
&\lesssim  L_{\sigma}^p b_t(\theta) \left[\int_{B \times B} \int_0^t G_B^{(\beta)}(t-s,\,x,\,y)G_B^{(\beta)}(t-s,\,x,\,z)  e^{\frac{-2\theta (t-s)}{p}}\,f(y,\,z)\,\d s\,\d y \,\d z\right]^{p/2} \nonumber\\
&\lesssim  L_{\sigma}^p b_t(\theta) \left[\int_0^t (t-s)^{-2\beta \lambda}  e^{\frac{-2\theta (t-s)}{p}}\,\d s\,\int_{B \times B} f(y,\,z)\d y \,\d z\right]^{p/2} \nonumber\\
&\lesssim  L_{\sigma}^p b_t(\theta) \left[\int_0^\infty s^{-2\beta \lambda}  e^{\frac{-2\theta s}{p}}\,\d s\,\int_{B \times B} f(y,\,z)\d y \,\d z\right]^{p/2} \nonumber\\
&\lesssim  L_{\sigma}^p p^{1-2 \beta \lambda} b_t(\theta) \left[\int_0^\infty \frac{e^{-2\theta r}}{r^{2\beta \lambda} } \,\d r\,\int_{B \times B} f(y,\,z)\d y \,\d z\right]^{p/2} \nonumber\\
&\lesssim  L_{\sigma}^p p^\frac{p(1-2 \beta \lambda)}{2} b_t(\theta) (2\theta)^\frac{p(2\beta \lambda-1)}{2} \Gamma (1-2\beta \lambda)^\frac{p}{2} \left[\int_{B \times B} f(y,\,z)\d y \,\d z\right]^{p/2},
\end{align}
where $b_t(\theta)=\sup_{0<s<t}\sup_{\text{z}\in B}e^{-\theta s} \E|u_s^{(\gamma)}(\text{z})-u_s^{(\beta)}(\text{z})|^p$. For $t > 0$, we fix $\theta > 0$ sufficiently large so that
\begin{align} \label{JNA-c21}
J_3 \lesssim \frac{1}{2} b_t(\theta).
\end{align}

Next we work on $J_2$.  Using Proposition \ref{JNA-c42} below and the assumption on $\sigma$, we obtain
\begin{align} \label{JNA-c20-b}
J_2&:=e^{-\theta t} \bigg[\int_0^t\int_{B\times B}|G_B^{(\gamma)}(t-s,\,x,\,y)-G_B^{(\beta)}(t-s,\,x,\,y)||G_B^{(\gamma)}(t-s,\,x,\,z)-G_B^{(\beta)}(t-s,\,x,\,z)|\times \nonumber\\
&~~~~~~~~~~\bigg[[\E|\sigma(u_s^{(\gamma)}(y))\sigma(u_s^{(\gamma)}(z))|^{p/2}]^{2/p} \,f(y,\,z)\, \d y\,\d z \,\d s\bigg]^{p/2} \nonumber\\
&\lesssim \sup_{t>0,\,x\in B}e^{-\theta t}\E|u_s^{(\gamma)}(x)|^p\bigg[\int_0^t\int_{B\times B}e^{-2\theta (t-s)/p}|G_B^{(\gamma)}(t-s,\,x,\,y)-G_B^{(\beta)}(t-s,\,x,\,y)|\times \nonumber\\
&~~~~~~~~~~|G_B^{(\gamma)}(t-s,\,x,\,z)-G_B^{(\beta)}(t-s,\,x,\,z)| \,f(y,\,z)\, \d y\,\d z \,\d s\bigg ]^{p/2}.
\end{align}
We combine the above estimates and obtain the following.
\begin{align*}
\sup_{x\in B} e^{-\theta s} &\E|u_s^{(\gamma)}(x)-u_s^{(\beta)}(x)|^p \lesssim  e^{-\theta t} |(\mathcal{G}_B^{(\gamma)} u)_t(x)-(\mathcal{G}_B^{(\beta)}u)_t(x)|^p + \frac{1}{2} b_t(\theta) +\\
& \sup_{t>0,\,x\in B}e^{-\theta t}\E|u_s^{(\gamma)}(x)|^p\bigg[\int_0^t\int_{B\times B}e^{-2\theta (t-s)/p}|G_B^{(\gamma)}(t-s,\,x,\,y)-G_B^{(\beta)}(t-s,\,x,\,y)| \times \nonumber \\ &~~~~~~~~~~|G_B^{(\gamma)}(t-s,\,x,\,z)-G_B^{(\beta)}(t-s,\,x,\,z)|\,f(y,\,z)\,\d s\, \d y\,\d z \bigg]^{p/2}.
\end{align*}
Since Lemma \ref{JNA-27}  and Proposition \ref{nane-tuan-2022-b} allow us to use dominated convergence theorem,  taking $\gamma\rightarrow \beta$ the result of the theorem follows.
\end{proof}

The following proposition is used in the proof of Theorem \ref{Thrm-10}. We use the bounds of $G_B^{(\beta)}(t,\,x,\,y)$ to show the proposition holds:
\begin{proposition} \label{JNA-c42}
Assume that $\{\varphi_n\}_{n\geq1}$ are uniformly bounded by a constant $C(B)$. Suppose that $d<\frac{\alpha}{2\beta}$, and $\int_{B \times B} f(x,\,y)\, \d x \,\d y <\infty$. For some $\theta$, the supremum on the $p^{th}$ moment of the solution $u_s^{(\beta)}(x)$ to equation  \eqref{JNA-8-b} given by
\begin{equation}
\sup_{0\leq s\leq t,\,x\in B}e^{-\theta s}\E|u_s^{(\beta)}(x)|^p
\end{equation}
is bounded above by a constant independent of $\beta$.
\end{proposition}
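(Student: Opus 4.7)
The plan is to adapt the proof of the analogous white-noise bound from Proposition \ref{JNA-42} to the colored-noise mild formulation \eqref{JNA-9-b}, keeping every constant uniform in $\beta$ by invoking Lemma \ref{nane-tuan}(a) together with Lemma \ref{Integral-bound}. Starting from \eqref{JNA-9-b}, I will apply $(a+b)^p\le 2^p(a^p+b^p)$ together with the Burkholder--Davis--Gundy inequality for the colored-noise stochastic integral to obtain
\[
\E|u_t^{(\beta)}(x)|^p \lesssim |(\mathcal{G}_B^{(\beta)} u_0)_t(x)|^p + \lambda^p \Bigg[\int_0^t\!\int_{B\times B}\! G_B^{(\beta)}(t-s,x,y)\,G_B^{(\beta)}(t-s,x,z)\,\big[\E|\sigma(u_s^{(\beta)}(y))\sigma(u_s^{(\beta)}(z))|^{p/2}\big]^{2/p} f(y,z)\,\d y\,\d z\,\d s\Bigg]^{p/2}.
\]
The deterministic term is bounded uniformly in $\beta$ because $u_0$ is bounded and $\int_B G_B^{(\beta)}(t,x,y)\,\d y\le 1$, so only the stochastic term needs further attention.

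Next, I will use the Lipschitz assumption on $\sigma$ together with Cauchy--Schwarz to estimate $[\E|\sigma(u_s^{(\beta)}(y))\sigma(u_s^{(\beta)}(z))|^{p/2}]^{2/p}\le L_\sigma^2 (\E|u_s^{(\beta)}(y)|^p)^{1/p}(\E|u_s^{(\beta)}(z)|^p)^{1/p}$, and then multiply the overall estimate by $e^{-\theta t}$, redistributing exponentials so that each moment factor picks up $e^{-\theta s}$ while the integrand gains an overall $e^{-2\theta(t-s)/p}$. Introducing $b_t(\theta):=\sup_{0<s<t,\,z\in B}e^{-\theta s}\E|u_s^{(\beta)}(z)|^p$ reduces the task to bounding
\[
I(\theta):=\int_0^t\int_{B\times B} G_B^{(\beta)}(t-s,x,y)\,G_B^{(\beta)}(t-s,x,z)\,e^{-2\theta(t-s)/p}\,f(y,z)\,\d y\,\d z\,\d s
\]
by a quantity independent of $\beta$ that tends to $0$ as $\theta\to\infty$.

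The hypothesis $d<\alpha/(2\beta)$ lets me apply Lemma \ref{nane-tuan}(a) with any $\lambda'$ satisfying $d/\alpha<\lambda'<1/(2\beta)$, giving the pointwise bound $G_B^{(\beta)}(t-s,x,\cdot)\le C(t-s)^{-\beta\lambda'}$ with $C$ depending only on $\lambda'$. Consequently the spatial integral decouples and equals the finite constant $\int_{B\times B}f(y,z)\,\d y\,\d z$, while Lemma \ref{Integral-bound}, applied after fixing a subinterval $(\beta_0,\beta_1)\ni\beta$ and corresponding admissible $\mu_0,\mu_1$, bounds the remaining time integral $\int_0^\infty s^{-2\beta\lambda'}e^{-2\theta s/p}\,\d s$ by a quantity depending only on $p,\mu_0,\mu_1$ and vanishing as $\theta\to\infty$. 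Combining these estimates yields $e^{-\theta t}\E|u_t^{(\beta)}(x)|^p\le C_1+C_2\lambda^p\Phi(\theta)\,b_t(\theta)$ with $\Phi(\theta)\to 0$ and all constants $\beta$-independent. Choosing $\theta$ large enough that $C_2\lambda^p\Phi(\theta)\le 1/2$ and then passing to the supremum over $(s,x)\in[0,t]\times B$ delivers $b_t(\theta)\le 2C_1$, uniformly in $\beta$ and in $t$. The only delicate point, and the main obstacle, is to ensure that both the heat-kernel pointwise bound and the time integral bound are truly $\beta$-independent; this is precisely what the constraints on $\lambda',\mu_0,\mu_1$ in Lemmas \ref{nane-tuan} and \ref{Integral-bound} are engineered to provide, once $\beta$ is restricted to a fixed subinterval of $(1/2,1)$.
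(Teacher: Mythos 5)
Your proposal is correct and follows essentially the same route as the paper's proof: the same $(a+b)^p$/Burkholder decomposition of the mild formulation \eqref{JNA-9-b}, the same Lipschitz and Cauchy--Schwarz reduction to the quantity $b_t(\theta)=\sup_{0<s<t}\sup_{z\in B}e^{-\theta s}\E|u_s^{(\beta)}(z)|^p$, the pointwise kernel bound of Lemma \ref{nane-tuan}(a) to decouple the spatial integral into $\int_{B\times B}f(y,z)\,\d y\,\d z$, and finally a choice of large $\theta$ to absorb the stochastic term. The only minor variation is that the paper evaluates the remaining time integral directly as a Gamma function, $\int_0^\infty r^{-2\beta\lambda}e^{-2\theta r}\,\d r=(2\theta)^{2\beta\lambda-1}\Gamma(1-2\beta\lambda)$, whereas you invoke Lemma \ref{Integral-bound} on a fixed subinterval $(\beta_0,\beta_1)$, which if anything makes the claimed uniformity in $\beta$ slightly more explicit.
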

\begin{proof} We use the mild formulation in equation \eqref{JNA-9-b} and follow similar computations to those used in the above proof. Now consider
\begin{equation}\label{JNA-46-c}
u_t^{(\beta)}(x)=
(\mathcal{G}_B^{(\beta)} u_0)_t(\text{x})+ \lambda \int_B\int_0^t G_B^{(\beta)}(t-s,\,\text{x},\,\text{y})\sigma(u_s^{(\beta)}(\text{y}))F(\d s\,,\d \text{y}),
\end{equation}
Now applying the inequality $(a+b)^p \leq 2^p(a^p\,+b^p)$ for any $a,b \geq 0$ and using Burkholder-Davis-Gundy inequality, the $p^{th}$ moment of \eqref{JNA-46-c} becomes
\begin{align}\label{JNA-47-c}
\E|u_t^{(\beta)}(x)|^p &\leq
2^p|(\mathcal{G}_B^{(\beta)} u_0)_t(\text{x})|^p+ (2\lambda)^p  \bigg [\int_{B\times B} \int_0^t |G_B^{(\beta)}(t-s,\,\text{x},\,\text{y})G_B^{(\beta)}(t-s,\,\text{x},\,\text{z})| \times \nonumber \\
&~~~~~~~~~~~~~~ \bigg [\E|\sigma(u_s^{(\beta)}(\text{y}))\sigma(u_s^{(\beta)}(\text{z}))|^{p/2} \bigg]^{2/p} \, f(y,\,z)\,\d s\,\d \text{y} \,\d \text{z} \bigg]^{\frac{p}{2}},
\end{align}
\begin{align}\label{JNA-48-c}
e^{-\theta t}\E|u_t^{(\beta)}(x)|^p &\lesssim
e^{-\theta t}|(\mathcal{G}_B^{(\beta)} u_0)_t(\text{x})|^p+ e^{-\theta t} \bigg [\int_{B\times B} \int_0^t |G_B^{(\beta)}(t-s,\,\text{x},\,\text{y})G_B^{(\beta)}(t-s,\,\text{x},\,\text{z})| \times \nonumber \\
&~~~~~~~~~~~~~~ \bigg [\E|\sigma(u_s^{(\beta)}(\text{y}))\sigma(u_s^{(\beta)}(\text{z}))|^{p/2} \bigg]^{2/p} \, f(y,\,z)\,\d s\,\d \text{y} \,\d \text{z} \bigg]^{\frac{p}{2}} \nonumber \\
&:=J_1 + J_2.
\end{align}
Since $\sigma$ is globally Lipschitz, using Lemma \ref{JNA-45}, and Lemma  \ref{nane-tuan}(a), the second term becomes
\begin{align} \label{JNA-49-c}
J_2 &:=e^{-\theta t} \bigg [\int_{B\times B} \int_0^t |G_B^{(\beta)}(t-s,\,\text{x},\,\text{y})G_B^{(\beta)}(t-s,\,\text{x},\,\text{z})| \bigg [\E|\sigma(u_s^{(\beta)}(\text{y}))\sigma(u_s^{(\beta)}(\text{z}))|^{p/2} \bigg]^{2/p} \, f(y,\,z)\,\d s\,\d \text{y} \,\d \text{z} \bigg]^{\frac{p}{2}},
 \nonumber\\
&\lesssim L_{\sigma}^p \bigg[\int_{B \times B} \int_0^t G_B^{(\beta)}(t-s,\,x,\,y)G_B^{(\beta)}(t-s,\,x,\,z)  \times \nonumber\\
& ~~~~~~~~~~e^{\frac{-2\theta t}{p}}[(\E[(u_s^{(\beta)}(y))^{p})^{\frac{1}{p}} (\E[(u_s^{(\beta)}(z))]^{p})^{\frac{1}{p}} \,f(y,\,z)\,\d s\,\d y \,\d z \bigg]^{p/2}.\nonumber\\
&\lesssim  L_{\sigma}^p b_t(\theta) \left[\int_{B \times B} \int_0^t G_B^{(\beta)}(t-s,\,x,\,y)G_B^{(\beta)}(t-s,\,x,\,z)\, e^{\frac{-2\theta (t-s)}{p}}\,f(y,\,z)\,\d s\,\d y \,\d z\right]^{p/2} \nonumber\\
&\lesssim  L_{\sigma}^p b_t(\theta) \left[\int_0^t (t-s)^{-2\beta \lambda}  e^{\frac{-2\theta (t-s)}{p}}\,\d s\,\int_{B \times B} f(y,\,z)\d y \,\d z\right]^{p/2} \nonumber\\
&\lesssim  L_{\sigma}^p b_t(\theta) \left[\int_0^\infty s^{-2\beta \lambda}  e^{\frac{-2\theta s}{p}}\,\d s\,\int_{B \times B} f(y,\,z)\d y \,\d z\right]^{p/2} \nonumber\\
&\lesssim  L_{\sigma}^p p^{1-2 \beta \lambda} b_t(\theta) \left[\int_0^\infty \frac{e^{-2\theta r}}{r^{2\beta \lambda} } \,\d s\,\int_{B \times B} f(y,\,z)\d y \,\d z\right]^{p/2} \nonumber\\
&\lesssim  L_{\sigma}^p p^\frac{p(1-2 \beta \lambda)}{2} b_t(\theta) (2\theta)^\frac{p(2\beta \lambda-1)}{2} \Gamma (1-2\beta \lambda)^\frac{p}{2} \left[\int_{B \times B} f(y,\,z)\d y \,\d z\right]^{p/2},
\end{align}
where $b_t(\theta)=\sup_{0<s<t}\sup_{\text{z}\in B}e^{-\theta s} \E|u_s^{(\beta)}(\text{z}|^p$. For $t > 0$, we fix $\theta > 0$ sufficiently large so that
\begin{align} \label{JNA-50-c}
J_2 \lesssim \frac{1}{2} b_t(\theta).
\end{align}
Thus, we have
\begin{align}\label{JNA-51-c}
e^{-\theta t}\E|u_t^{(\beta)}(x)|^p &\lesssim 1+\frac{1}{2} b_t(\theta).
\end{align}
Taking the supremum of the left side of \eqref{JNA-51-c} the result follows.
\end{proof}

\newpage

\end{document}